\newtheorem{theorem}{Theorem}[section]
\newtheorem{lemma}[theorem]{Lemma}
\newtheorem{corollary}[theorem]{Corollary}
\newtheorem{proposition}[theorem]{Proposition}
\theoremstyle{definition}
\newtheorem{definition}[theorem]{Definition}
\theoremstyle{remark}
\newtheorem{remark}{Remark}[section]
\numberwithin{equation}{section}
\newcommand{\ds}{\displaystyle}
\newcommand{\R}{\mathbb{R}}
\newcommand{\N}{\mathbb{R}}
\newcommand{\Sp}{\mathcal S}
\newcommand{\rimh}{r_{i-1/2}}
\newcommand{\riph}{r_{i+1/2}}
\newcommand{\imh}{i-1/2}
\newcommand{\pjmh}{p_{j-1/2}}
\newcommand{\pjph}{p_{j+1/2}}
\newcommand{\fh}{f^n_{j,i}}
\newcommand{\fhim}{f^n_{j,i-1}}
\newcommand{\V}{\mathcal{V}}
\newcommand{\indic}[1]{ \mathop{\mathbf{1}_{#1}} }
\newcommand{\Dr}{\Delta r}
\newcommand{\Dp}{\Delta p}
\newcommand{\Dt}{\Delta t}
\newcommand{\vphi}{\varphi}
\newcommand{\abs}[1]{\left\lvert#1\right\rvert}
\newcommand{\norm}[1]{\left\lVert#1\right\rVert}
\begin{document}

\title[A sorpion-coagulation equation]{Derivation and mathematical study of a sorption-coagulation equation}

\author[E. Hingant]{Erwan Hingant}
\address{CI\textsuperscript{2}MA -- Universidad de Concepci\'on \\ Casilla 160-C, Concepci\'on, Chile.}
\email{ehingant@ci2ma.udec.cl}

\author[M. Sep\'ulveda]{Mauricio Sep\'ulveda}
\address{CI\textsuperscript{2}MA \& DIM -- Universidad de Concepci\'on \\ Casilla 160-C, Concepci\'on, Chile.}
\email{mauricio@ing-mat.udec.cl}

\subjclass[2010]{Primary: 65R20, 82C05; Secondary: 35Q82, 35Q92}

\keywords{Polymers; Metal ions; Coagulation equation; Existence of solutions; Finite volume scheme; Weak stability}

\date{\today}

\begin{abstract}
This work is devoted to the derivation and the matematical  study of a new model for  water-soluble polymers and metal ions interactions, which are used in chemistry for their wide range of applications. 
First, we motivate and derive a model that describes the evolution of the configurational distribution of polymers. One of the novelty resides in the configuration variables which consider both, the size of the polymers and the quantity of metal ions they captured through sorption. The model consists in a non-linear transport equation with a quadratic source term, the coagulation.
Then, we prove the existence of solutions for all time to the problem thanks to classical fixed point theory. Next, we reformulate the coagulation operator under a conservative form which allows to write a finite volume scheme. The sequence of approximated solutions is proved to be convergent (toward a solution to the problem) thanks to a $L^1 - weak$ stability principle. Finally, we illustrate the behaviour of the solutions using this numerical scheme and we intend to discuss on the long-time behaviour.
\end{abstract}  

\maketitle

{ \small \tableofcontents }

\section{Introduction}

\subsection{Motivations}

A class of macromolecules, the polymers, has emerged for their potential applications in various fields, as superconducting materials, liquid crystal, and biocompatible polymers. Also, in environmental science, they can be used for instance to remove pollutant from aqueous solutions, or bacteria, fungi and algae \cites{Rivas2011,Rivas2003}. We are particularly interested in polymers which perform interactions with metal ions such as copper ions (Cu$^{2+}$), lead ions (Pb$^{2+}$), and many others.

One of the applications of this concept, being very promising, lies in membrane separation process. This technique is used to obtain highly purified water in industrial process, or to the contrary, to ``wash'' water after an industrial process before release it in the environment \cites{Zagorodni2007,Rivas2003}. The idea is to take advantage the interactions of particular polymers with metal ions in order to retain free metal ions from an aqueous solution (water). The solution is then filtrate through a membrane, and due to the high molecular weight of the polymers, they cannot cross the membrane. It results that, on one side, ions are retained by the polymers, while on the other side, water has crossed the membrane and is free of ions. We refer to \cite{Rivas2006} and the reviews \cites{Rivas2003,Rivas2011} for a more precise description of this process. 

Nevertheless, the development of this technique leads to some technical difficulties and gives rise to important questions in order to produce efficient methods. Among these, they are the role of fouling effect (adhesion of polymers to the membrane), aggregation phenomenon, concentration effect, interaction with the wall of the cell (recipient) and interaction with the fluid. For recent findings, techniques and models on the subject we refer to the works in \cites{Palencia2010,Palencia2011,Palencia2009,Palencia2009b,Moreno2002,Moreno2006,Rivas2004}. 

To go further in this direction of a better understanding of such processes, we decided to propose a new model that accounts for the polymers - metal ions and polymers - polymers interactions. The model we develop here differs from the one derived for instance in \cites{Moreno2006,Moreno2002,Rivas2006} where they used macroscopic quantities and equilibrium theory. Indeed, here we propose a new approach, ever used in other physical problems such as Ostwald ripening or coagulation-fragmentation equation, but, up to our knowledge, has never been formalised in this way for water-polymers in interactions  with metal ions. The particularity of this problem reveals mathematical difficulties as for the existence and uniqueness of solution, the numerical approximation, and the long-time behaviour of the solution. In this work, we start to answer to some of these questions and give some hints on the behaviour of the solution.  

The remainder of this introduction set the theory and derive the model studied in the present work. 

%
%

\subsection{Theory and model}

A polymer is a macromolecule made up of many repeat units called monomers. To illustrate this definition, an example of polymer is the poly(acrylic acid), a synthetic polymers. Indeed, the acrylic acid which has the molecular formula $C_3H_4O_2$ plays the role of the monomer, whereas the poly(acrylic acid) is a succession of acrylic acid, forming a chain, and has the molecular formula $-(C_3H_4O_2)_n-$, where $n$ stands for the number of occurrences of the monomer. 

The model we investigate below concerns highly soluble polymers, particularly water-soluble polymers. Rather, it aims at modelling the interactions between these polymers and metal ions in solution. These interactions are responsible for the retention of metal ions by the polymer and take place in specific sites of the chain, called functional groups. The functional groups are  repeat subunits of the polymer able to hold one metal ion each. It is a group of atoms are molecules and it can be, or the monomer itself, or else composed of several monomers. Its composition fluctuates according to: the structure of  the chain; the metal ions used; as well as the type of interaction committed. The reader can refer to \cites{Rivas2003,Rivas2006} to get some examples of such polymers, but, among these we mention  polyelectrolytes and  polychelatogens. The former is a class of polymers whose functional groups are charged, so  metal ions bind to them by an electrostatic interaction. For instance, the monomer of the poly(
acrylic acid) polymer above-mentioned, loose a proton ($H$) in solution leading to a negatively charged polymer. While for the latter metal ions bind to  functional groups forming coordinate bonds. However, some polymers combine both interactions,  even sometimes others weakest interactions can take place, see \cite{Rivas2011}. Furthermore, all depends on both the polymer and the metal ion considered.  Nevertheless, here we follow \cite{Rivas2006} and  we assume the interactions modelled might be formulated as reversible chemical reactions. Thus, we acknowledge two processes here: the binding process which consists in the association between a metal ion and a functional group of a polymer also named adsorption; and the opposite process which consists in the dissociation of a metal ion from a functional group or desorption. Both processes formed the sorption phenomenon. 

Modeling such interactions, between water-soluble polymers and metal ions, recently received a considerable attention. Several models has been developed, analysed and compared to experimental data, we would mention again the works made in \cites{Moreno2006,Moreno2002,Rivas2006}. The authors mainly deal with macroscopic quantities, which are the concentrations of metal ions bound to polymers and free, the concentration of polymers, \emph{etc}, together with equilibrium theory to build suitable models.

In the present work, we derive a model that accounts for the evolution of a system of interacting particles consisting of water-soluble polymers and metal ions in solution. Its establishment resides in a chemical formulation of both microscopic processes (association and dissociation). For this, let us label by $P_{x,y}$ a polymer consisting of $x$ functional groups, such that $y$ of them are associated to a metal ion. The variable $x\in \N^*$ can reach any numbers (at least theoretically), indeed the polymers can be as big as the technology permits it. Then, the number $y$ belongs to $\{0,\ldots,x\}$ by definition of a functional groups, since each functional group reacts with one and only one metal ion, thus a polymer made up of $x$ functional groups cannot bind more than $x$ metal ions. Remark, it makes clear that if $y$ is the number of functional groups occupied by a metal ion, it is also the number of metal ion bound to the polymer. The couple $(x,y)$ is named the configuration of the polymer. Finally, 
we 
label by $M$ a free metal ions, free meaning that it is dissolved and not bound to a polymer.

The reversible chemical reaction between one free metal ion $M$ and one polymer $P_{x,y}$ is
\begin{equation} \label{eq:kinetic}
 P_{x,y} + M \xrightleftharpoons[l_{x,y+1}]{k_{x,y}} P_{x,y+1},
\end{equation}
where $k_{x,y}$ is the binding rate (or association rate) at which a polymer of configuration $(x,y)$ interacts with a metal ions to get a new configuration $(x,y+1)$, while $l_{x,y+1}$ is the dissociation rate at which a metal ions is removed  from a polymer $(x,y+1)$, and this latter gets the new configuration $(x,y)$. Both rates depend on the configuration of the polymer and inherently on the surrounding conditions (pH of the solution, temperature, \emph{etc}) supposed to be fixed.

In addition to polymer - metal ion interactions, we take into account polymer -polymer interactions. Actually, under experimental conditions, the presence of other polymers leads to inter-polymer complexes and can produce gels or precipitates, as pointed out in \cite{Rivas2003}. We incorporate in the model the formation of inter-polymer complexes through a binary coagulation, which is the formation of a biggest polymer from two smaller ones. This reaction can be written as follows:
\begin{equation} \label{eq:coagulation}
 P_{x,y} + P_{x',y'} \xrightarrow{a_{x,y;x',y'}} P_{x+x',y+y'},
\end{equation}
where the coagulation rate $a_{x,y;x',y'}$ is the rate at which a polymer with the configuration $(x,y)$ and an other one with configuration $(x',y')$ will produce a new biggest macromolecule.  The reaction preserves the number of functional groups and metal ions bound, so the new polymers gets the configuration $(x+x',y+y')$. \\

\medskip

We devote the next section to establish the set of equations studied in the remainder of this paper. To this end we would finish here by some comments on the mathematical objects used.   

First, we use evolution equation which means solutions to the system will be time-dependent functions and are not at the equilibrium yet.

Second, the experiments use generally an high number of particles (both polymers and metal ions). That is why quantities are expressed in term of their molar or mass concentration.  So that, we introduce two new convenient variables, $p = \varepsilon x$ and $q=\varepsilon y$, respectively for the mole of functional groups and the mole of associated (to metal ions) functional groups, where $\varepsilon = 1/ \mathcal N_a << 1$ the inverse of the Avogadro's number. Both variables $p$ and $q$ can reach a continuum of values, so we consider them as continuous variables on the contrary to variables $x$ and $y$ which are natural number.   
Thus, the states of the system at a time $t\geq 0$ will be  given by a configurational distribution function $f$ where the quantity
\[\int_{\mathcal B} f(t,p,q) \, dp dq\,\]
%
%
provides the molar concentration of polymers in the system having a configuration in the subset $\mathcal B$ of the configurational space (defined latter).  

\begin{remark}
 It could be possible to keep a discrete version of the problem considering the concentration of all the $P_{x,y}$ with $x$ and $y$ the number of functional groups (as natural number).  Nevertheless, it is sometimes mathematically less tractable due to the huge number of equations involved. 
%
%
 But, the continuous model can be seen as a limit of the discrete one, through an appropriate rescaling and here we have in mind the parameters  $\varepsilon$. The reader can make himself an idea, on what would be the discrete model here and how it is linked to the continous one, in \cites{Laurencot2002c,Vasseur2002} where rigorous derivations of the continuous Lifshitz-Slyozov equation from the Becker-D\"oring model is made. Both model being closed to the one presented here.    
\end{remark}

\subsection{Equations} 

Below, we use the two continuous variables for the polymer configuration. First, $p\in \R_+ \coloneqq (0,+\infty)$ the quantity in mole of functional groups. Second, $q\in\R_+$ the quantity in mole of functional groups associate each to a metal ion. We name, the variable $q$: the quantity of occupied functional groups (in mole).  Thus, the set of admissible configurations $(p,q)$ is: 
\[S :=  \left\{ (p,q) \in \R^2_+ :  0 < q < p \right\}\,. \]
Indeed, the polymer can reach any size, so its number of functional groups can be as large as we want. While its number of occupied functional groups can belong to $(0,p)$, \emph{i.e} up to the total number of functional groups of the polymer.   Then, we define the configurational distribution function of polymers, denoted by $f(t,p,q)$, as a function of time $t\geq0$ over the configuration space $S$.  The system governing the evolution of $f$ is 
\begin{equation}\label{eq:polymer}
 \frac{\partial f}{\partial t}  + \frac{\partial}{\partial q}  \left( \V f \right) =  Q(f,f), \qquad \text{on } \ \R_+ \times S\,,
\end{equation}
where $Q$ is the coagulation operator and $\V$ denotes the rate of association-dissociation or by other means the sorption rate. The latter determines the mechanism of ions transfer with a polymer (association and dissociation of ions), it is the continuous operator describing reaction \eqref{eq:kinetic}, while the former rules the polymer - polymer interaction given by \eqref{eq:coagulation}, where both are defined below. Equation \eqref{eq:polymer} on the configurational distribution function is not enough to characterised all the system. To complete the model, it remains to introduce a second equation on the molar concentration of free metal-ions (being in the solution but unbound to polymers). We denote this concentration by $u(t)$, as a function of time $t\geq0$, and it is given by the constraint of metal ions conservation (bound and free), namely
\begin{equation}\label{eq:constraint}
u(t) + \int_S q f(t,p,q) dq dp = \rho, \qquad \text{on } \ \R_+\,,
\end{equation}
where $\rho>0$ is a constant which stands for the total quantity of metal ions in the system (bound and unbound to  polymers). Indeed, as $f(t,p,q) \Delta q \Delta p$ approaches the molar concentration of polymers with configuration $(p,q)$ in the limit of $\Delta p$ and $\Delta q$ small,  if we multiply this quantity by $q$ the number of occupied functional groups (= the number of metal ions bounded) in mole, thus we get the molar concentration of metal ions bounded onto the polymers with such a configuration. Thus, the integral term in \eqref{eq:constraint} account for the molar concentration of metal ions associated to polymers and the balance equation \eqref{eq:constraint} expresses the conservation of matter, between associate and free metal ions in the system. 

Let us now define more precisely the rate $\V$ of metal ions association-dissociation. As a general form for $\V$ we consider the following chemical reaction rate
\[\V(u(t),p,q) = k(p,q) u(t)^\gamma - l(p,q)\,,\]
where $k$ is the association rate, or adsorption, at which a free monomers bind to a polymer (depending on the type of interaction and the diffusion rate of the particles) and $l$ is the dissociation rate, or desorption (depending on the strength of the interaction). The association rate is multiply by $u(t)^\gamma$, as a classical law of mass action and for sake of simplicity we restrict ourselves to $\gamma=1$ which is the order of the reaction. A relevant example of association-dissociation rate would be an analogy to the Langmuir's law (for the adsorption of metal ions onto a surface), namely
\begin{equation} \label{eq:example_V}
 \V(u(t),p,q) = k_0 (p-q)^\alpha u(t) - l_0 q^\beta \,,
\end{equation}
with $k_0,\, l_0>0$ parameters and  $\alpha,\, \beta>0$ geometrical factor, see \cite{Somorjai2010}. Indeed, the association rate depends on the quantity of available functional groups $p-q$ while the dissociation rate depends only on the quantity of metal ions bound to the polymers $q$ (the more metal ions, the more the probability of a dissociation is great).  

Next we explicit the coagulation operator $Q$ with the coagulation rate $a$ defined as a nonnegative function over $S \times S$ satisfying the symmetry assumption
\begin{equation} \label{eq:a_symmetric}
a(p,q;p',q') = a(p',q';p,q)\,.
\end{equation}
It gives the rate at which two polymers with the configuration $(p,q)$ and $(p',q')$ will coagulate. The symmetry assumption follows form the impossibility in the system to distinguish the coagulation of a $(p,q)$ with a $(p',q')$ or the coagulation of a $(p',q')$ with a $(p,q)$ because it is the same reaction. Then, $Q$ can be decomposed by a gain term $Q^+$ and a depletion term $Q^-$  that is
\begin{equation*} 
  Q = Q^+ -Q^-,
\end{equation*}
where
\begin{align}
   Q^+(f,f)(p,q) & =   \frac 1 2 \int_0^p \int_{0}^{p'} a(p',q',p-p',q-q') \indic{(0,p-p')}(q-q') \nonumber \\ 
  &  \phantom{= \frac 1 2 \int_0^p \int_{0}^{p'} a(p',q',p-p',} \times \, f(p',q')f(p-p',q-q')\, d q' d  p'\,, \label{eq:Qplus}\\
   Q^-(f,f)(p,q) & = L(f)(p,q)  f(p,q) \,,\nonumber \\
 &  \qquad \text{with} \quad  L(f)(p,q) =  \int_0^\infty \int_0^{p'} a(p,q,p',q')f(p',q') \, d q' d p'\,. \label{eq:Qminus}
\end{align} 
The gain term $Q^+$ accounts for the production of a polymer with a configuration $(p,q)$ thanks to the coagulation of a $(p',q')$ with $q'<p'<p$ and a  $(p'-p,q'-q)$ with $0<q-q'<p-p'$. Likewise, the depletion term account for the disappearance in the system of a polymer $(p,q)$ when coagulate to any other $(p',q')$ for the benefit of a new polymer with  configuration $(p+p',q+q')$.

The problem \eqref{eq:polymer}-\eqref{eq:constraint} is completed by a Dirichlet boundary condition:
\begin{equation}\label{eq:boundary_polymer}
f = 0, \qquad  \text{on } \ \partial S\,,
\end{equation}
which of course suppose suitable assumptions on the characteristics discussed later. Finally, we require two initial conditions:
\begin{equation} \label{eq:init}
 f(t=0,\cdot)= f^{in}  \  \text{on } \ S, \ \text{and} \ u(t=0)= u^{in}\,. 
\end{equation}
%

\subsection{Contents of the paper and related works}\label{sec:related_works}

The remainder of this paper is devoted to the existence of global solutions to problem \eqref{eq:polymer}-\eqref{eq:constraint}, its numerical approximation, simulations and a discussion on the long-time behaviour.

In Section \ref{sec:analysis}, we establish in Theorem \ref{thm:wellposed} the existence of global solutions. The proof is based on fixed point theorems two treat the non-linear terms, one for the coagulation operator \eqref{eq:Qplus}-\eqref{eq:Qminus}, the second for the constraint \eqref{eq:constraint}. The technique used, implying hypotheses on the regularity of the coefficients, is based on the works on Lifshitz-Slyozov (LS) equation in \cite{Collet2000} and LS with encounters (coagulation) in   \cite{Collet1999}. Techniques also adapted in \cite{Helal2013} for biological polymers. The LS equation is a size structured model for clusters (polymers or more general) formation by addition-depletion of monomers \cite{Lifshitz1961}, while LS with encounters also take into account merging clusters . The coagulation (only size-dependent) is part of the class of coagulation-fragmentation (CF) equation, where fragmentation is the reverse  operator (break-up, splitting of clusters), and it has been studied from a  
mathematical point of view for instance in \cites{Dubovskii1996,Laurencot2000}, also in \cites{Laurencot2002,Amann2005} for the CF equation with space diffusion, and in \cite{Broizat2010} which generalized CF equation with a kinetic approach. The particularity of the model here, is the boundary conditions and the conservation involves. Both necessitates a careful attention in the proof of existence. The former requests to treat the characteristics that come from the boundary, the latter need a particular attention due to the nature of the configurational space.

In Section \ref{sec:scheme}, we propose a finite volume scheme to construct numerical solutions 
approaching the problem. The numerical scheme is in the spirit of the works made in \cite{Bourgade2008} and  \cite{Filbet2008}, where the authors propose a reformulation of the CF equation in a manner well-adapted to a finite volume scheme, namely a conservative form. Here, we write a similar conservative form, but as a cross derivative with respect to both variables. For suitable numerical scheme, we also refer to \cite{Filbet2004} for LS,  to  \cite{Goudon2013a} for LS with encounters and to \cite{Goudon2012} for a model with space diffusion. Then the rest of the section is devoted to the proof of the convergence result given in Theorem \ref{thm:convergence}. It is based on $L^1$ weak compactness of sequences of approximations. We emphasis that the numerical scheme introduced here takes its originality in the introduction of the cross-derivative for the coagulation operator. The main difficulty reside in its approximation which involves new conservation. Moreover, we get a better regularity in the weak 
stability principle result than in \cite{Bourgade2008}, which may also apply to their case.  

 In section \ref{sec:illustration}, we present a numerical simulation which is produced by the numerical scheme. Based on this simulation, we discuss and interpret the long-time behaviour of the model and we show how it might be related to the behaviour of a non-autonomous coagulation equation.
 Here, it might be very interesting to connect this problem to \cites{Herrmann2012,Collet2002,Gabriel2012} on the LS and related model or \cites{Escobedo2005,Desvilettes2009} for the coagulation equation. 

\section{Rescaling and existence of global solutions} \label{sec:analysis}

\subsection{Rescaled problem}
The nature of the configurational space $S$ is not really convenient for computations both for the theoretical point of view and the numerical implementation. Thus, we decide to rescale the problem and operate a change of variable in the distribution $f$ with respect to a physically relevant new variable $r$. We call it the ion ratio (without dimension) and its definition is, for $(p,q)\in S$,
\[ r \coloneqq \frac q p \in(0,1)\,.\]
We introduce then the new unknown $\tilde f$ defined, over the new configuration space $\Sp\coloneqq\R_+\times(0,1)$,  by
\[\tilde f(t,p,r) = p f(t,p,rp)\,.\]
Then, we operate a a change of variable in the association-dissociation and coagulation rates, by introducing $\tilde \V$ define over $\R\times\Sp$ and $\tilde a$ over $\Sp\times\Sp$, such that
\begin{equation*}
 \tilde{\mathcal V}(u,p,r) =  \V(u,p,rp), \text{ and } \ \tilde a(p,r;p',r')=   a(p,rp;p',r'p')\,.
\end{equation*} 
Thus, they satisfy
\begin{equation} \label{eq:V_kl}
 \tilde \V(u,p,r) = \tilde k(p,r) u - \tilde l(p,r)\,,
\end{equation}
with $\tilde k(p,r)=k(p,rp)$ and $\tilde l(p,r)=l(p,rp)$. Also, the symmetry assumption \eqref{eq:a_symmetric} becomes 
\begin{equation} \label{eq:a_symmetric_resc}
 \tilde a(p,r;p',r') = \tilde a(p',r';p,r)\,.
\end{equation}
A formal computation leads to the assessment 
\begin{equation*}
 \partial_t \tilde f(t,p,r) + \frac{1}{p} \partial_r \left(\tilde \V(u(t),p,r) \tilde f(t,p,r) \right) = p Q(f,f)(t,p,q)\,.
\end{equation*}
Finally, letting $\tilde Q =  \tilde Q^+ - \tilde Q^-$ such that
\begin{align}
   \tilde Q^+(\tilde f,\tilde f)(p,r) & =   \frac 1 2  \int_0^p \int_{0}^{1} \frac{p}{p-p'} \tilde a(p',r',p-p',r^*) \indic{(0,1)}(r^*) \nonumber \\ 
  &  \phantom{=   \frac 1 2  \int_0^p \int_{0}^{1} \frac{p}{p-p'} \tilde a(p',r',} \times \, \tilde f(p',r') \tilde f(p-p',r^*) \, d r' d  p' \,, \label{eq:Qplus_resc}\\ 
   \tilde Q^-(\tilde f,\tilde f)(p,r) & = \tilde L(\tilde f)(p,r) \tilde f(p,r) \nonumber \\
 &  \qquad \text{with} \quad  \tilde L(\tilde f)(p,r) =  \int_0^\infty \int_0^{1} \tilde a(p,r;p',r') \tilde f(p',r')\, d r' d p'\,, \label{eq:Qminus_resc}
\end{align} 
with $r^*= \frac{rp-r'p'}{p-p'}$, we get 
\[\tilde Q(\tilde f,\tilde f)(t,p,r) = p  Q(f,f)(t,p,rp)\,.\]
In the following and for the rest we drop tildes in the rescaled problem, for sake of clarity. Now, we are able to reformulate the problem which is to find the distribution $f$ satisfying 
\begin{equation}\label{eq:polymer_resc}
 \frac{\partial f}{\partial t}  + \frac 1 p \frac{\partial}{\partial r}  \left( \V f \right) =  Q(f,f), \qquad \text{on }\  \R_+ \times \Sp\,,
\end{equation}
with the constraint
\begin{equation}\label{eq:constraint_resc}
u(t) + \iint_{\Sp} r p f(t,p,r) dr dp = \rho, \qquad  \text{on } \ \R_+\,.
\end{equation}
and boundary condition \eqref{eq:boundary_polymer} remains given by
\begin{equation}\label{eq:boundary_resc}
f = 0, \qquad \text{on }\ \partial \Sp \,, 
\end{equation}
while the initial conditions is only a change of variables in \eqref{eq:init}:
\begin{equation} \label{eq:init_resc}
 f(t=0,\cdot)= f^{in}  \  \text{on } \Sp, \ \text{and} \ u(t=0)= u^{in}\,. 
\end{equation}

%

%
%
%

%
%
%


\subsection{Hypotheses and result}

The study of the problem \eqref{eq:polymer_resc}-\eqref{eq:constraint_resc} with \eqref{eq:boundary_resc} and \eqref{eq:init_resc} requires hypothesis whether they naturally arise in the problem or technicals. Namely, we assume that:

\medskip

\noindent \textbf{H1.} The initial distribution $f^{in} \in L^1\left(\Sp,(1+p)drdp\right)$ is nonnegative and $u^{in}\geq 0$ such that 

\begin{equation} \label{eq:regularity_fin_uin}
\rho \coloneqq u^{in} + \iint_{\Sp} rp f^{in}(p,r)\, dr dp < +\infty \,. 
\end{equation}

\noindent \textbf{H2.} The coagulation rate $a\in L^\infty(\Sp\times\Sp)$ is nonnegative, satisfies \eqref{eq:a_symmetric_resc} and 

\begin{equation} \label{hyp1}
  \norm{a}_{L^\infty} \leq K\,.
\end{equation}

\noindent \textbf{H3.} The rates functions $p \mapsto  k(p,\cdot),\ l(p,\cdot) \in L^\infty(\R_+;W^{2,\infty}(0,1))$ are both nonnegatives and
\begin{equation} \label{hyp2}
 \norm{k}_{L^\infty(\R_+;W^{2,\infty}(0,1))} + \norm{l}_{L^\infty(\R_+;W^{2,\infty}(0,1))} \leq K \,.
\end{equation}
and for all $p\in \R_+$,
\begin{equation} \label{hyp2bis}
 \norm{k(p,\cdot)}_{W^{2,\infty}(0,1)} + \norm{l(p,\cdot)}_{W^{2,\infty}(0,1)} \leq Kp \,.
\end{equation}

\noindent \textbf{H4.} For all $u\geq 0$ and $p\in\R_+$,  
\begin{equation}\label{hyp3ter}
 \V(u,p,r=0) \geq 0, \text{ and } \, \V(u,p,r=1) \leq 0\,,
\end{equation}
and 
\begin{equation} \label{hyp3bis}
\partial_r \V(u,p,r) = \partial_r k \ u - \partial_r l \leq 0 \quad \text{a.e.} \ (u,p,r)\in \R_+\times\Sp\,.
\end{equation}
Here, $K>0$ denotes a constant. Note that \eqref{hyp3ter} ensures the characteristics remain in the set $\Sp$ and allows us to prescribe the boundary \eqref{eq:boundary_resc}. In fact, it is equivalent with respect to \eqref{eq:V_kl} and (H4) to assume
\begin{equation} \label{hyp3}
 k(p,0) \geq 0, \; l(p,0) = 0 \; \text{and}\; k(p,1) = 0, \; l(p,1) \geq 0\,,
\end{equation}
for all $p \in \R_+$.

\begin{remark}
 With such variables, we note that example \eqref{eq:example_V} becomes
 \begin{equation*} 
 \frac 1 p \V(u(t),p,r) = k_0 p^{\alpha-1} (1-r)^\alpha u(t) - l_0 p^{\beta-1} r^\beta \,.
\end{equation*} 
And, hypothesis (H4) is consistent with this example.
\end{remark}

Now, we are in position to give a definition of the solutions to the problem \eqref{eq:polymer_resc}-\eqref{eq:constraint_resc}.
\begin{definition}[weak solution 1] \label{def:solution}
Let $T>0$ and the initial conditions $f^{in}$ and $u^{in}$ satisfying (H1). A weak solution to \eqref{eq:polymer_resc}-\eqref{eq:constraint_resc} on $[0,T)$ is a couple $(f,u)$ of nonnegative functions such that
\begin{equation} \label{eq:regularity_f}
 f\in C\left([0,T);w-L^1(\Sp)\right)\cap L^\infty\left([0,T),L^1(\Sp,pdrdp)\right)\,,
\end{equation}
and $u \in C([0,T))$, satisfying for all $t\in[0,T)$ and $\varphi\in \mathcal C^1_c (\R_+\times[0,1])$
\begin{multline} \label{eq:distrib_sol}
 \int_S f(t,p,r) \varphi(p,r) \, dr dp - \int_S f^{in}(p,r) \varphi(p,r) \, dr dp \\ 
 = \int_0^t \int_S  \frac 1 p \V(u(s),p,r) f(s,p,r) \partial_r \varphi(p,r) \, dr dp \, ds  \\
 + \int_0^t \int_S Q(f,f)(s,p,r) \varphi(p,r) \, dr dp \, ds\,,
\end{multline}
together with \eqref{eq:constraint_resc} 
\end{definition}

We remark here that regularity \eqref{eq:regularity_f}, where $C\left([0,T);w-X\right)$ means continuous from $[0,T)$ to $X$ a Banach space with respect to the weak topology of $X$. Hypothesis (H1) to (H3) suffice to define \eqref{eq:distrib_sol}. Particularly, \eqref{eq:Qplus_resc}-\eqref{eq:Qminus_resc} entail, as we will see later, that $Q(f,f)$ belongs to $L^\infty\left(0,T;L^1(\Sp)\right)$. 


\medskip

We can now state the main result: 

\begin{theorem}[Global existence] \label{thm:wellposed}
 Let $T>0$. Assume that $f^{in}$ and $u^{in}$ satisfy (H1) and that hypotheses (H2)-(H4) are fulfilled. Then, there exists a  solution $(f,u)$ to the problem \eqref{eq:polymer_resc}-\eqref{eq:constraint_resc} in the sense of Definition \ref{def:solution}. Moreover, the solution has the regularity
 \[ f\in C\left([0,T);L^1(\Sp)\right)\,, \]
 with both
 \[ \int_S f(t,p,r) dr dp \leq \int_S f^{in}(p,r) \, dr dp \,,   \]
 and
 \[ \int_S p f(t,p,r) dr dp = \int_S pf^{in}(p,r) \, dr dp\,.\]
\end{theorem}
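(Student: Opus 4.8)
\emph{Strategy.} The plan is to decouple the two nonlinearities of \eqref{eq:polymer_resc}--\eqref{eq:constraint_resc}, namely the quadratic coagulation $Q(f,f)$ and the coupling of $u$ to $f$ through the constraint \eqref{eq:constraint_resc}, by a two-level fixed point argument built on the linear transport equation. For a prescribed $u\in C([0,T])$ with $0\le u\le\rho$, set $b(t,p,r)\coloneqq \V(u(t),p,r)/p$. By \eqref{hyp2bis} the quotient $\partial_r\V/p=(\partial_r k\,u-\partial_r l)/p$ is bounded, so $b$ is Lipschitz in $r$ uniformly in $(t,p)$, and by \eqref{hyp3ter} it points inward at $r=0$ and $r=1$. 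The first step is to solve, for such a frozen $u$ and for nonnegative data $S$ and $c\ge0$, the linear problem $\partial_t f+\partial_r(bf)+cf=S$ by the method of characteristics: the inward-pointing condition keeps the characteristics inside $(0,1)$ and renders the boundary condition \eqref{eq:boundary_resc} consistent, giving an explicit nonnegative representation of $f$ with $L^1$ estimates, the flow Jacobian being controlled by $\partial_r b$ (this is where the $W^{2,\infty}$-in-$r$ regularity of $k,l$ is used). The representation also yields continuity of $t\mapsto f(t)$ in $w\text{-}L^1(\Sp)$ and in $L^1(\Sp,p\,dr\,dp)$.

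\emph{A priori estimates.} Next I would record the estimates driving both fixed points. Testing \eqref{eq:polymer_resc} with $\varphi\equiv1$ and using the conservative form of $Q$ gives $\int_\Sp f(t)\le\int_\Sp f^{in}$, while $\varphi=p$ gives the exact conservation $\int_\Sp pf(t)=\int_\Sp pf^{in}$, since coagulation preserves $p$ and the $r$-transport leaves it untouched. Hence $\iint_\Sp rpf\le\iint_\Sp pf=\int_\Sp pf^{in}<\infty$ and $u=\rho-\iint_\Sp rpf\le\rho$. The key sign observation is that $u$ stays nonnegative: integrating the transport term by parts in $r$ (the boundary terms vanishing by \eqref{eq:boundary_resc}) gives $u'=-\iint_\Sp\V f$, and as $u\downarrow0$ one has $\V(0,p,r)=-l(p,r)\le0$ by \eqref{hyp3}, so $u'=\iint_\Sp lf\ge0$; thus $u\in[0,\rho]$ is preserved. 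These bounds are uniform in time, which is what globalises the local constructions below.

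\emph{Inner fixed point (coagulation).} For a fixed admissible $u$, I would construct $f[u]$ solving the full coagulation--transport equation by a contraction in $C([0,T'];L^1(\Sp))$. Since $a\in L^\infty$ by (H2), one has $\norm{Q^+(f,g)}_{L^1}\le K\norm{f}_{L^1}\norm{g}_{L^1}$ and $\norm{L(g)}_{L^\infty}\le K\norm{g}_{L^1}$, so the map sending $g$ to the solution of $\partial_t f+\partial_r(bf)+L(g)f=Q^+(g,g)$ --- produced by the first step with $c=L(g)\ge0$ and $S=Q^+(g,g)\ge0$ --- is a contraction for $T'$ small. The uniform $L^1$ and mass bounds then let me reiterate to cover all of $[0,T]$, yielding $f[u]$.

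\emph{Outer fixed point (constraint) and conclusion.} Finally I would close the constraint through $\Phi:u\mapsto\rho-\iint_\Sp rp\,f[u]$. By the a priori estimates $\Phi$ maps $\{u\in C([0,T]):0\le u\le\rho\}$ into itself, and since $\Phi(u)'=\iint_\Sp\V f[u]$ is bounded the image is equicontinuous; Arzel\`a--Ascoli together with the continuous dependence of $f[u]$ on $u$ make $\Phi$ compact and continuous, so Schauder's theorem furnishes a fixed point $u^\ast$ and $(f[u^\ast],u^\ast)$ solves \eqref{eq:polymer_resc}--\eqref{eq:constraint_resc}. The weak formulation \eqref{eq:distrib_sol} is read off from the characteristic representation, the strong continuity $f\in C([0,T);L^1(\Sp))$ is upgraded from weak continuity using the $L^1$ bound, and the two displayed identities are precisely the estimates above. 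I expect the main obstacle to be the continuous dependence $u\mapsto f[u]$ required for Schauder: the field $b=\V(u)/p$ enters the characteristics nonlinearly and the coagulation source mixes all sizes, so estimating the resulting $L^1$ difference demands the flow estimates of the first step combined with the uniform moment bounds, with the boundary behaviour at $r=0,1$ tracked throughout.
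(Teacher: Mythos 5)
Your overall architecture---an inner fixed point for the coagulation with frozen $u$ built on characteristics, an outer Schauder fixed point for the constraint, with the zeroth- and first-moment estimates driving both---is the same as the paper's, which itself follows Collet--Goudon. But there is a genuine gap in your outer fixed point. You claim that ``by the a priori estimates $\Phi$ maps $\{u\in C([0,T]):0\le u\le\rho\}$ into itself,'' where $\Phi(u)=\rho-\iint_{\Sp} rp\,f[u]\,dr\,dp$. The upper bound $\Phi(u)\le\rho$ is immediate from $f[u]\ge0$, but the lower bound $\Phi(u)\ge0$ is not available for an arbitrary input $u$ in the ball: all the moment estimates give is $\iint_{\Sp} rp\,f[u]\,dr\,dp\le\iint_{\Sp} p\,f[u]\,dr\,dp=\iint_{\Sp} p\,f^{in}\,dr\,dp$, and this last quantity can well exceed $\rho=u^{in}+\iint_{\Sp} rp\,f^{in}\,dr\,dp$ (take $f^{in}$ concentrated near $r=0$ with large mass; the frozen-$u$ dynamics can then transport mass toward large $r$). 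Your sign argument (``as $u\downarrow0$ one has $\V(0,p,r)=-l\le0$, so $u'\ge0$'') is an invariance argument for the \emph{coupled} system: it uses that the $u$ appearing inside $\V$ is the same function whose sign is being tracked. In the decoupled iteration this fails: $\tilde u=\Phi(u)$ satisfies $\tilde u'=-\iint_{\Sp}\V(u(t),p,r)f[u]\,dr\,dp$ with $\V$ evaluated at the \emph{input} $u$, so when $\tilde u$ hits zero you have no information on the sign of $\V(u(t),\cdot)$, since $u(t)$ need not vanish there.

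The paper resolves precisely this point by defining the outer map with a truncation, $\mathcal M(u)=\bigl[\rho-\iint_{\Sp} rp\,f_u\,dr\,dp\bigr]_+$, which manifestly maps $\mathcal B$ into itself and remains Lipschitz in time (so Ascoli and Schauder still apply), and then proving \emph{a posteriori}---only at the fixed point, where $u$ and $\V(u,\cdot)$ are coupled again---that $\iint_{\Sp} rp\,f_u\,dr\,dp<\rho$ for all $t$, so the positive part is inactive and the constraint \eqref{eq:constraint_resc} genuinely holds. Your invariance idea is the correct heuristic for that final a posteriori step, but it cannot be invoked where you place it. A secondary, more routine gap: your ``testing with $\varphi=p$'' to get exact mass conservation is formal, since $\varphi=p$ is neither bounded nor compactly supported and $pQ(f,f)$ is not a priori integrable; the paper justifies this step by regularizing the kernel with $a_P(p,r;p',r')=a(p,r;p',r')\indic{(0,P)}(p)\indic{(0,P)}(p')$, proving strong convergence $f_P\to f$ in $L^\infty(0,T;L^1(\Sp))$ together with a uniform Gronwall bound on the first moment, and only then passing to the limit to use \eqref{eq:pQ_null}.
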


Proof of Theorem \ref{thm:wellposed} relies on 2 main steps, which are similar to the ones used for instance in \cite{Collet1999} and \cite{Collet2000} for LS equation. The first step consists in the construction of a mild solution $f$ of equation \eqref{eq:polymer_resc} for a given $u$.  This is achieved through a fixed point theorem by virtue of the contraction property of the coagulation operator. The second step follows a second fixed point which associates \eqref{eq:polymer_resc} to the constraint \eqref{eq:constraint_resc} on $u$. 

Since the method is rather classical, we only provide in the next section the key arguments of the proof, by highlighting the differences between our problem and LS equation with encounter. Particularly, the treatment of the characteristics.

\subsection{Existence of solutions}


\subsubsection{The autonomous problem.}

We start the analysis of the problem for a given nonnegative $u\in C([0,T])$ with $T>0$, \emph{i.e.} we avoid the difficulty induced by the constraint \eqref{eq:constraint_resc}. A well-know approach is to construct the characteristics of the transport operator. These are the curves parametrized by  $p\in \R_+$ and associated to $u$ given, for any $(t,r)\in [0,T]\times (0,1)$, by the solution of

\begin{equation*}
 \begin{array}{l}
  \ds \frac{d}{d s} R_p(s;t,r) = \frac 1 p \V(u(s),p,R_p(s;t,r)) \quad \text{on } [0,T] \\[0.8em]
  \ds R_p(t;t,r) = r\,.
 \end{array}
\end{equation*} 
According to \eqref{hyp2}-\eqref{hyp2bis} and \eqref{hyp3}, there exists a unique solution $R_p(\cdot;t,r)\in C^1([0,T])$. We only consider the characteristic while they are defined, \emph{i.e} $R_p(s;t,r)\in(0,1)$. We would first remark that (H4) ensures the characteristics remain into $(0,1)$ for any $s\geq t$. Moreover, we define the origin time $\sigma_p(t,r) = \inf\{ s \in [0,t] : 0<R_p(s;t,r) <1\}$. So, from the characteristics curves we construct the so-called mild-formulation which is $f$ solution of 
\begin{equation} \label{eq:sol_mild}
 f(t,p,r)  = \begin{cases}
	    \ds f^{in}(p,R_p(0;t,r))J_p(0;t,r) & \\[0.8em]
	    \ds \hspace{1em}+ \int_0^t Q(f,f)(s,p,R_p(s;t,r))J_p(s;t,r) \, ds\,, & \text{if } \sigma_p(t,r)=0 \\[0.8em]
	     \ds \int_{\sigma_p(t,r)}^t Q(f,f)(s,p,R_p(s;t,r))J_p(s;t,r) \, ds\,, & \text{otherwise.}
 \end{cases}
\end{equation}
for all $t\in[0,T]$ and a.e. $(p,r) \in \Sp$ and
 \[J_p(s;t,r) \coloneqq \frac{\partial  R_p}{\partial r}(s;t,r) =  \exp\left( - \int_s^t \frac 1 p (\partial_r \V)(\sigma,p,R_p(\sigma;t,r))\, d\sigma  \right)\,.\]
Note that, for an enough regular solution, the boundary condition \eqref{eq:boundary_resc} is satisfy by \eqref{eq:sol_mild} since $\sigma_p(t,0)=\sigma_p(t,1)=t$. 

Then, to prove Theorem \ref{thm:wellposed} we need to recover the notion of weak solution from the one of  mild solution. This is given by the following the result: 

\begin{lemma}
If $f^{in}\in L^1(\Sp)$ and $Q(f,f)\in L^1((0,T)\times\Sp)$ then, Then the following statements are equivalent:
\begin{enumerate}[i)]
 \item $f\in C([0,T],L^1(\Sp))$ and is solution in the weak sense, \emph{i.e.} satisfies \eqref{eq:distrib_sol}. 
 \item $f$ is a mild solution, \emph{i.e.} satisfies \eqref{eq:sol_mild}.
\end{enumerate}
\end{lemma}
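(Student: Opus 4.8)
The plan is to prove the equivalence between the weak formulation \eqref{eq:distrib_sol} and the mild formulation \eqref{eq:sol_mild} by exploiting the method of characteristics. The core idea is that along each characteristic curve $s \mapsto R_p(s;t,r)$, the transport equation \eqref{eq:polymer_resc} reduces to an ordinary differential equation whose solution is exactly the Duhamel/variation-of-constants formula \eqref{eq:sol_mild}. The Jacobian $J_p(s;t,r) = \partial_r R_p(s;t,r)$ encodes the compression or dilation of phase-space volume along the flow, and its explicit exponential form follows from differentiating the characteristic ODE in $r$ and solving the resulting linear ODE.

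For the implication (ii) $\Rightarrow$ (i), I would start from the mild formulation and verify that it satisfies the weak identity. The strategy is to take a test function $\varphi \in C^1_c(\R_+\times[0,1])$, multiply \eqref{eq:sol_mild} by $\varphi(p,r)$, and integrate over $\Sp$. The key computational device is the change of variables along characteristics: for fixed $p$, the map $r \mapsto R_p(s;t,r)$ is a $C^1$ diffeomorphism on the relevant interval with Jacobian $J_p(s;t,r)$, so that integrating a quantity weighted by $J_p$ against $\varphi$ can be rewritten as an integral against $\varphi$ composed with the flow. Differentiating the resulting expression in $t$ and using the characteristic ODE together with the chain rule should reproduce the transport term $\frac 1 p \V f \, \partial_r \varphi$ and the source term $Q(f,f)\varphi$. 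The boundary condition is automatically handled because $\sigma_p(t,0)=\sigma_p(t,1)=t$, so the contributions from the boundary of $(0,1)$ vanish; this is where hypothesis (H4), ensuring characteristics do not leave $\Sp$, is essential.

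For the converse (i) $\Rightarrow$ (ii), I would fix $(p,r)$ and test the weak formulation against functions that approximate the flow, or more directly, show that a weak solution which is continuous into $L^1$ must coincide almost everywhere with the mild solution by a uniqueness-type argument. Concretely, one composes the weak solution with the backward characteristic flow to define $g(s) = f(s,p,R_p(s;t,r)) J_p(s;t,r)$ and shows, using \eqref{eq:distrib_sol} with suitable test functions localized near the characteristic, that $g$ satisfies the integral equation defining \eqref{eq:sol_mild}; the regularity $f \in C([0,T],L^1(\Sp))$ and $Q(f,f) \in L^1$ guarantee all integrals are well-defined and that the manipulations are justified.

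I expect the main obstacle to be the rigorous justification of the change of variables and the differentiation under the integral sign near the boundary, specifically handling the origin time $\sigma_p(t,r)$ and the two cases in \eqref{eq:sol_mild}. Characteristics emanating from the spatial boundary $\partial \Sp$ (where $\sigma_p = 0$ in the sense that they originate from the initial data) must be distinguished from those that enter through $r=0$ or $r=1$ at a later time, and the set where $\sigma_p(t,r) > 0$ requires care since the initial-data term is absent there. Controlling the measurability and the almost-everywhere behavior of $\sigma_p$ as a function of $(p,r)$, and ensuring the test-function argument localizes correctly without picking up spurious boundary terms, is the delicate technical point; the smoothness of $\V$ in $r$ from (H3) and the sign conditions \eqref{hyp3ter}--\eqref{hyp3} are precisely what make this tractable.
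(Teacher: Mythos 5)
Your overall plan coincides with the paper's intended proof: the paper itself only sketches this lemma, stating that it follows from ``a change of variable and an identification process'' along the characteristics (citing the Lifshitz--Slyozov literature and DiPerna--Lions), and, like you, it singles out the treatment of the origin time $\sigma_p(t,r)$ as the only delicate point. So the architecture of your argument --- change of variables $r \mapsto R_p(s;t,r)$ with Jacobian $J_p$, differentiation along the flow for (ii)~$\Rightarrow$~(i), and composition with the backward flow for (i)~$\Rightarrow$~(ii) --- is the right one and matches the paper.

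There is, however, one concrete ingredient you misattribute, and it is exactly at the step you flag as the main obstacle. You write that the smoothness of $\V$ from (H3) and the sign conditions \eqref{hyp3ter}--\eqref{hyp3} are ``precisely what make this tractable.'' Those conditions only guarantee that characteristics do not leave $[0,1]$; they say nothing about how the configuration space splits into the region whose backward characteristics reach time $0$ (where the $f^{in}$ term is present in \eqref{eq:sol_mild}) and the regions whose backward characteristics enter through $r=0$ or through $r=1$ (where $\sigma_p(t,r)>0$). What controls that splitting is the monotonicity hypothesis \eqref{hyp3bis}, $\partial_r \V \leq 0$: it makes the backward flow order-preserving in $r$, so that for each $(t,p)$ the three sets above are intervals in $r$, ordered, and separated by curves depending continuously on the data. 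Without this, the set $\{\sigma_p(t,r)>0\}$ need not have this structure, the case distinction in \eqref{eq:sol_mild} cannot be glued continuously, and the change-of-variables/identification argument breaks down at the interface between the two regimes --- precisely the measurability and localization issues you anticipate but do not resolve. The paper states explicitly that \eqref{hyp3bis} ``is crucial to separate continuously the characteristics coming from $0$ and $1$,'' and any completed version of your proof must invoke it at that point.
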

This result is well known and comes from a change of variable and an identification process, we refer to \cites{Collet1999,Collet2000} for LS equation or  \cite{Diperna1989} for Boltzmann equation. The only delicate point remain in the treatment of the origin time and to this end we should proceed as in \cite{Helal2013}. The monotonicity hypothesis \eqref{hyp3bis} is crucial to separate continuously the characteristics coming from $0$ and $1$ and hence to be able to construct the weak solution from \eqref{eq:sol_mild}. Now, with the help of this lemma, it is sufficient to prove the existence of a mild solution. 

Before claiming the existence of mild solution for a given $u$, let us introduce some \emph{a priori} properties of the coagulation operator. Namely, for any $f$ and $g$ both belongs to $L^1\left(\Sp\right)$, we have 

\begin{align}
  &\ds \norm{Q(f,f)}_{L^1(\Sp)} \leq   2 K \norm{f}_{L^1(\Sp)}^2,  \label{eq:L1_Q} \\[0.8em]
  &\ds \norm{Q(f,f) - Q(g,g)}_{L^1(\Sp)} \leq  2 K \left( \norm{f}_{L^1(\Sp)} + \norm{g}_{L^1(\Sp)} \right) \norm{f - g}_{L^1(\Sp)}\,.  \label{eq:contract_Q}
\end{align}
These two estimates ensure that $f \mapsto Q(f,f)$ maps $L^1(\Sp)$ into itself and is Lipschitz  on any bounded subset of $L^1(\Sp)$. Finally, we would remark that for any $f\in L^1(\Sp)$ and $\vphi \in L^\infty(\Sp)$, it holds
\begin{multline} \label{eq:Q_identity}
 \iint_{\Sp} Q(f,f)(p,r) \vphi(p,r)\, dr dp = \frac 1 2 \iint_{\Sp \times \Sp} a(p,r;p',r')f(p,r)f(p',r') \\
 \times \Big[ \vphi(p+p',r^\#) - \vphi(p,r) -\vphi(p',r') \Big] dr'dp'drdp\,.
\end{multline}
with $r^\# = (rp+r'p')/(p+p') \in (0,1)$, which is called weak formulation of the coagulation operator, see \cites{Desvilettes2009,Escobedo2003,Laurencot2002}. We obtain this identity by inversion of integrals applying Fubini's theorem, then changes of variable. In particular,  when $\vphi = \indic{\Sp}$,
\begin{equation} \label{eq:Q_negative}
 \iint_{\Sp} Q(f,f)(p,r) \, dr dp \leq 0\,.
\end{equation}
And, if moreover $f\in L^1(\Sp,pdrdp)$, then 

\begin{equation} \label{eq:pQ_null}
 \iint_{\Sp} p Q(f,f)(p,r) \, dr dp = 0\,.
\end{equation}

Now, for a given $\rho>0$, we define the set:
\[\mathcal B = \{ u\in C([0,T]) : 0\leq u(t) \leq \rho \}\,,\]
and we are ready to claim the next proposition. 

\begin{proposition} \label{prop:autonomous} 
Let $T>0$ and $\rho>0$ with $u$ belongs to the associated set $\mathcal B$. If $f^{in} \in L^1\left(\Sp,(1+p)drdp\right)$, then there exists a unique nonnegative mild solution, i.e. satisfying \eqref{eq:sol_mild}, with
\[f\in L^\infty\left(0,T;L^1\left(\Sp,(1+p) dr dp\right)\right)\,.\]
Moreover, for all $t\in(0,T)$ we have
\begin{equation}  \label{eq:estim_moment0_mild}
 \iint_{\Sp} f(t,p,r) \, dr dp \leq \iint_{\Sp} f^{in}(p,r) \, dr dp\, , 
\end{equation}
and
\begin{equation}  \label{eq:estim_moment1_mild}
 \iint_{\Sp} pf(t,p,r) \, dr dp = \iint_{\Sp} p f^{in}(p,r) \, dr dp\,. 
\end{equation}
\end{proposition}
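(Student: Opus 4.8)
The plan is to build the mild solution by a Banach fixed point argument on a short time interval, upgrade it to nonnegativity and then to the whole interval $[0,T]$ by the \emph{a priori} mass control, and finally recover the weighted bound and the moment identities by testing the equation against suitable (truncated) weights.

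First I would fix $M>\norm{f^{in}}_{L^1(\Sp)}$ and, for $\tau\in(0,T]$ to be chosen, introduce the complete metric space $X_{\tau,M}=\{f\in C([0,\tau];L^1(\Sp)):\sup_{t}\norm{f(t)}_{L^1(\Sp)}\leq M\}$ equipped with the sup-in-time $L^1$ distance, together with the map $\mathcal T$ sending $f$ to the right-hand side of \eqref{eq:sol_mild}. The workhorse of every estimate is the change of variables $\rho=R_p(s;t,r)$ at fixed $p$ and $s$: since $J_p(s;t,r)=\partial_r R_p(s;t,r)>0$, the flow $r\mapsto R_p(s;t,r)$ is, on the set of living characteristics, a diffeomorphism onto its image in $(0,1)$, whence $\int \abs{g(R_p(s;t,r))}J_p(s;t,r)\,dr\leq\norm{g}_{L^1(0,1)}$ for any $g$, the shrinking of the domain of living characteristics exactly compensating $J_p\geq1$ (which holds by \eqref{hyp3bis}). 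Applying this to the initial-data term and to the source integrated along characteristics, the bounds \eqref{eq:L1_Q}--\eqref{eq:contract_Q} give $\norm{\mathcal T f(t)}_{L^1(\Sp)}\leq\norm{f^{in}}_{L^1(\Sp)}+2KM^2 t$ and $\sup_t\norm{\mathcal T f(t)-\mathcal T g(t)}_{L^1(\Sp)}\leq 4KM\tau\,\sup_t\norm{f(t)-g(t)}_{L^1(\Sp)}$. Choosing $\tau$ small so that $\norm{f^{in}}_{L^1(\Sp)}+2KM^2\tau\leq M$ and $4KM\tau<1$ makes $\mathcal T$ a contraction of $X_{\tau,M}$ into itself, and the Banach fixed point theorem produces a unique mild solution on $[0,\tau]$.

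To obtain nonnegativity I would rewrite \eqref{eq:sol_mild} in Duhamel form with the loss term absorbed into the integrating factor $\exp(-\int_s^t L(f)(\sigma,p,R_p(\sigma;t,r))\,d\sigma)$; differentiating along characteristics shows this is equivalent to \eqref{eq:sol_mild}, but it now involves only $f^{in}\geq0$, the nonnegative gain $Q^+$, and a positive integrating factor. Running the Picard iteration from $f\equiv0$, the cone of nonnegative functions is preserved, so the unique fixed point is nonnegative. With $f\geq0$ in hand, testing \eqref{eq:distrib_sol} with $\vphi=\indic{\Sp}$ and using \eqref{eq:Q_negative} gives $\norm{f(t)}_{L^1(\Sp)}=\iint_{\Sp}f(t)\leq\iint_{\Sp}f^{in}=\norm{f^{in}}_{L^1(\Sp)}$, which is \eqref{eq:estim_moment0_mild}. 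Since this bound is uniform in time, the local existence time $\tau$ depends only on $\norm{f^{in}}_{L^1(\Sp)}$, so restarting the construction at $\tau,2\tau,\dots$ with the same $\tau$ covers $[0,T]$ in finitely many steps and yields a global solution in $C([0,T];L^1(\Sp))$, unique by the same local-uniqueness argument repeated.

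Finally, for the weighted bound and \eqref{eq:estim_moment1_mild} I would test against the truncation $\vphi_N(p)=\min(p,N)$. The transport contribution vanishes because $\int_0^1\partial_r(\V f)\,dr=[\V f]_{r=0}^{r=1}=0$ by the boundary condition \eqref{eq:boundary_resc}, while \eqref{eq:Q_identity} together with the subadditivity $\vphi_N(p+p')\leq\vphi_N(p)+\vphi_N(p')$ forces $\iint_{\Sp}Q(f,f)\vphi_N\leq0$; hence $\iint_{\Sp}\vphi_N f(t)\leq\iint_{\Sp}\vphi_N f^{in}\leq\iint_{\Sp}pf^{in}$, and letting $N\to\infty$ by monotone convergence gives $f\in L^\infty(0,T;L^1(\Sp,(1+p)\,dr dp))$. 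Knowing now that the first moment is finite, \eqref{eq:pQ_null} applies exactly and the transport term still integrates to zero, so $\tfrac{d}{dt}\iint_{\Sp}pf\,dr dp=0$, which is the conservation \eqref{eq:estim_moment1_mild}. The step I expect to be most delicate is the rigorous bookkeeping of the characteristics and the origin time $\sigma_p(t,r)$ underlying the change-of-variables estimate---in particular the measurability and splitting of $\{r:\sigma_p(t,r)=0\}$, and the use of \eqref{hyp3bis} to keep the characteristics issued from $r=0$ and $r=1$ continuously separated---and, to a lesser extent, the justification of the boundary terms when passing from $\vphi_N$ to the weight $p$.
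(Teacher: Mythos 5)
Your proposal is correct, and its first half is essentially the paper's own Step 1: a Banach fixed point for the map $f \mapsto$ RHS of \eqref{eq:sol_mild} on a bounded set of time-continuous $L^1$ functions, using \eqref{eq:L1_Q}--\eqref{eq:contract_Q} and the characteristics change of variables (the paper delegates this line-to-line to Collet--Goudon), nonnegativity via the integrating-factor form, the zeroth-moment decay from \eqref{eq:Q_negative}, and global extension by restarting on $[\tau,2\tau],\dots$. Where you genuinely diverge is the first-moment part. The paper proves $f\in L^\infty\left(0,T;L^1(\Sp,(1+p)\,drdp)\right)$ and \eqref{eq:estim_moment1_mild} by regularizing the \emph{kernel}: it sets $a_P=a\indic{(0,P)}\indic{(0,P)}$, solves the mild equation with $a_P$ to get approximations $f_P$, shows $f_P\to f$ strongly in $L^\infty(0,T;L^1(\Sp))$, extracts a uniform-in-$P$ bound on $\iint_{\Sp} p f_P$ by Gronwall (possible because $a_P$ has compact support, so $pf_P$ is integrable from the outset), and only then integrates the mild form against $p$ and invokes \eqref{eq:pQ_null}. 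You instead regularize the \emph{weight}: testing with $\vphi_N(p)=\min(p,N)$, using subadditivity of $\vphi_N$ in \eqref{eq:Q_identity} to get $\iint_{\Sp} Q(f,f)\vphi_N\,drdp\leq 0$, monotone convergence for the uniform moment bound, and then, since the bounded kernel and the finite first moment make $pQ(f,f)$ integrable, dominated convergence plus \eqref{eq:pQ_null} to upgrade the inequality to conservation. Both devices are standard in coagulation theory; yours is more elementary (no auxiliary approximating solutions, no strong-convergence step, no Gronwall), while the paper's kernel truncation is the argument that scales to unbounded (sublinear) kernels and anticipates the truncated problem on which the finite volume scheme of Section 3 is built. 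The one technical point you should make explicit is that $\vphi_N$ (and likewise $\indic{\Sp}$) is neither $C^1$ nor compactly supported, hence not admissible in \eqref{eq:distrib_sol} as written; since $\vphi_N$ is independent of $r$ (so the transport term vanishes identically under a cutoff in $p$, with no boundary contribution) and bounded (so dominated convergence handles the mass and coagulation terms), the approximation is routine, but it deserves a line.
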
 

\begin{proof}
Here we only give a sketch of the proof. 

\medskip

\noindent \textit{Step 1. Existence and uniqueness.} The local existence of a unique nonnegative solution  $f\in L^\infty\left(0,T';L^1(\Sp)\right)$, for some  $T'>0$ small enough, readily follows from the Banach fixed point theorem applied to the operator that maps $f$ to the right-hand side of \eqref{eq:sol_mild} on a bounded subset of $L^\infty\left(0,T';L^1(\Sp)\right)$. To that, we follow line-to-line \cite{Collet1999} using properties \eqref{eq:L1_Q} and \eqref{eq:contract_Q}.

Then, the global existence, for any time $T>0$, is obtained using estimation \eqref{eq:estim_moment0_mild}, indeed by a classical argument we construct a unique solution on intervals $[0,T']$, $[T',2T']$, \emph{etc}. So, it remains to prove  \eqref{eq:estim_moment0_mild}, which directly follows from the integration of \eqref{eq:sol_mild}, using that $f^{in}\in L^1(\Sp)$ and \eqref{eq:Q_negative}. 

\medskip

\noindent \textit{Step 2. Mass conservation.} It remains to prove \eqref{eq:estim_moment1_mild} which needs  $f\in L^\infty\left(0,T;L^1(\Sp,pdrdp)\right)$ and $Q(f,f)$ too. But, identity \eqref{eq:Q_identity} holds only for $\vphi\in L^\infty(\Sp)$ and \emph{a priori} $pQ(f,f)$ is not integrable. It is enough to follow \cite{Collet1999}*{Lemma 4} which involves a regularization procedure using $a_P(p,r;p',r') = a(p,r;p',r') \indic{(0,P)}(p) \indic{(0,P)}(p')$ in \eqref{eq:sol_mild} and construct a sequence of approximation $f_P\in  L^\infty\left(0,T;L^1(\Sp)\right)$. Then, computing the $L^1$ norm of $f-f_P$ with the mild formulation yields to  the strong convergence of $f_P$ toward the solution $f$ in $L^\infty\left(0,T;L^1(\Sp)\right)$ when $P\rightarrow +\infty$, since $a_P \rightarrow a$ \emph{a.e.} $\Sp\times\Sp$. Finally, because $a_P$  has a compact support, $pf_P$ is integrable and from (H2) we get the uniform bound (in $P$):
\[ \iint_{\Sp} p f_P(t,p,r) \leq C(T)\,,\]
for some constant $C(T)>0$ obtained by a Gronwall's lemma on the mild formulation. We get $f\in L^\infty\left(0,T;L^1(\Sp,(1+p)drdp)\right)$ and $pQ(f,f)$ is integrable. We conclude coming back to \eqref{eq:sol_mild} and integrating it against $p$, which yields \eqref{eq:estim_moment1_mild} thanks to  \eqref{eq:pQ_null}.
\end{proof}

\begin{remark}
 The boundedness of $a$ is a key ingredients in the estimation used in the proof above. Indeed, without it the control of the mass could break down, \emph{i.e.} in finite time $\iint_{\Sp} p f(t,p,r) \,drdp = +\infty$, known as gelation phenomena. Nevertheless, the condition could be relaxed, generally up to a sub-linear coagulation kernel, see for instance \cites{Escobedo2003}.  
\end{remark}

We close this section by stating an additional regularity on the pseudo-moment of the mild solution, key argument to couple the constraint \eqref{eq:constraint_resc} to \eqref{eq:polymer_resc}. 

\begin{corollary}
 Under hypotheses of Proposition \ref{prop:autonomous},  
\[M(t) \coloneqq \iint_{\Sp} r p f(t,p,r) \, dr dp \leq  \iint_{\Sp} p f^{in}(p,r) \, dr dp \,, \] 
and $ M \in W^{1,\infty}([0,T])$ with 
\begin{equation}\label{eq:deriv_moment}
M'(t) = \iint_{\Sp} \V(u(t),p,r)f(t,p,r)\, dr dp\,.
\end{equation}
\end{corollary}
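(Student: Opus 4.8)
The bound on $M$ is immediate: since $0<r<1$ and $f\geq 0$ we have $rpf\leq pf$ pointwise, so $M(t)\leq \iint_{\Sp} pf(t,p,r)\,drdp$, and the right-hand side equals $\iint_{\Sp} pf^{in}\,drdp$ by the mass conservation \eqref{eq:estim_moment1_mild}. The heart of the matter is therefore the identity \eqref{eq:deriv_moment}. The plan is to insert the (non-compactly supported) weight $\vphi(p,r)=rp$ into the weak formulation \eqref{eq:distrib_sol}, which the mild solution satisfies thanks to the equivalence lemma above together with \eqref{eq:L1_Q}. The crucial algebraic fact is that $rp$ is conserved by coagulation: plugging $\vphi(p,r)=rp$ into \eqref{eq:Q_identity} and using $r^\#(p+p')=rp+r'p'$, the bracket $\vphi(p+p',r^\#)-\vphi(p,r)-\vphi(p',r')$ vanishes identically, so that the coagulation contribution drops out. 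Moreover $\partial_r(rp)=p$, so the transport term reduces to $\frac1p\V f\,\partial_r\vphi=\V f$, which is precisely the integrand sought in \eqref{eq:deriv_moment}.

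Since $\vphi=rp\notin\mathcal C^1_c(\R_+\times[0,1])$, I would carry out this computation on the truncated test functions $\vphi_P(p,r)=rp\,\chi_P(p)$, where $\chi_P\in\mathcal C^1_c(\R_+)$ satisfies $0\leq\chi_P\leq1$ and $\chi_P\to1$ pointwise on $\R_+$ as $P\to\infty$; note that $\vphi_P\in\mathcal C^1_c(\R_+\times[0,1])$ is admissible. All three limits are controlled by the moment bound $f\in L^\infty(0,T;L^1(\Sp,(1+p)drdp))$ from Proposition \ref{prop:autonomous}. Indeed: for the boundary terms, $\abs{rp\chi_P f}\leq pf$ and dominated convergence produces $M(t)-M(0)$; for the transport term, $\frac1p\V f\,\partial_r\vphi_P=\V f\,\chi_P(p)\to\V f$ with the domination $\abs{\V f}\leq K(\rho+1)\,p\abs{f}\in L^1$, using $u\leq\rho$ and $\abs{k(p,\cdot)}+\abs{l(p,\cdot)}\leq Kp$ from \eqref{hyp2bis}; for the coagulation term I would apply \eqref{eq:Q_identity} to the bounded weight $\vphi_P$, bound the bracket by $2(p+p')$, and dominate the double integral by $K(p+p')f(p,r)f(p',r')$, whose integral equals $2K\norm{pf}_{L^1}\norm{f}_{L^1}<\infty$. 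Since the bracket tends to $0$ pointwise, dominated convergence forces this term to vanish in the limit $P\to\infty$.

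Passing to the limit thus yields
\[
 M(t)-M(0)=\int_0^t\iint_{\Sp}\V(u(s),p,r)f(s,p,r)\,drdp\,ds .
\]
Writing $g(s)=\iint_{\Sp}\V(u(s),p,r)f(s,p,r)\,drdp$, the estimate $\abs{g(s)}\leq K(\rho+1)\norm{pf(s)}_{L^1}\leq K(\rho+1)\norm{pf^{in}}_{L^1}$ (again by \eqref{eq:estim_moment1_mild}) shows $g\in L^\infty(0,T)$, hence $M$ is Lipschitz, i.e. $M\in W^{1,\infty}([0,T])$, and the fundamental theorem of calculus gives $M'(t)=g(t)$ for a.e. $t$, which is \eqref{eq:deriv_moment}. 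The only genuine difficulty is the justification of the truncation limit for a weight growing linearly in $p$: everything hinges on the $p$-moment control of $f$ from Proposition \ref{prop:autonomous}, which is exactly what makes the otherwise non-admissible choice $\vphi=rp$ legitimate.
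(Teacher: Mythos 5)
Your proof is correct and takes essentially the same route as the paper: the paper also tests the weak formulation \eqref{eq:distrib_sol} against $\varphi_\varepsilon(p,r)=r\xi_\varepsilon(p)$, where $\xi_\varepsilon$ is a compactly supported $C^1$ approximation of $p$ (the analogue of your $rp\,\chi_P(p)$), passes to the limit by dominated convergence using the bound $f\in L^\infty\left(0,T;L^1(\Sp,(1+p)drdp)\right)$ and \eqref{hyp2bis}, and discards the coagulation contribution because $rp$ is conserved under coagulation, concluding $M'=\iint_{\Sp}\V f$ exactly as you do. If anything, your explicit verification that the bracket in \eqref{eq:Q_identity} vanishes for the weight $rp$, with the domination by $K(p+p')f(p,r)f(p',r')$, spells out a step the paper compresses into a citation of the conservation identity \eqref{eq:pQ_null}.
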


\begin{proof}
The first estimation is a direct consequence of Proposition \ref{prop:autonomous}. Then, we use the formulation \eqref{eq:distrib_sol} and as a test function we let $\varphi_\varepsilon(p,r)= r\xi_\varepsilon(p)$ such that  $\xi_\varepsilon \in C^1_c(\R_+$ and $\xi_\varepsilon(p) = p$ over $ (2\varepsilon, 1/2\varepsilon)$ with $supp \, \xi \subset (\varepsilon, 1/\varepsilon)$, thus
\begin{multline*}
\iint_{\Sp} r f(t,p,r) \xi_{\varepsilon}(p) \, dr dp  = \iint_{\Sp} r f^{in}(p,r) \xi_{\varepsilon}(p) \, dr dp \\ 
 + \int_0^t \iint_{\Sp}  \frac 1 p \V(u(s),p,r) f(s,p,r)  \xi_{\varepsilon}(p) \, dr dp \, ds  \\
 + \int_0^t \iint_{\Sp} r Q(f,f)(s,p,r) \xi_{\varepsilon}(p) \, dr dp \, ds\,,
\end{multline*}
Since we have $f\in L^\infty\left(0,T;L^1(\Sp,(1+p)drdp)\right)$, by the use of \eqref{hyp2bis} and \eqref{eq:pQ_null}, with the Lebesgue theorem we pass to the limit $\varepsilon\rightarrow 0$ and we get 
\begin{equation*}
M(t) = \iint_{\Sp} rp f^{in}(p,r) \, dr dp + \int_0^t \iint_{\Sp}  \V(u(s),p,r) f(s,p,r)  \, dr dp \, ds\,.
\end{equation*}
So, we conclude that
\begin{equation*}
 \frac{d}{dt} M(t)  =  \iint_{\Sp}  \V(u(t),p,r) f(t,p,r)  \, dr dp\,.
\end{equation*}
\end{proof}

We are now ready to apply the second fix point to connect $u$ and $f$ in the system.  

%
%

\subsubsection{Fix point on $u$.}

Again we follow \cites{Collet1999,Collet2000}, \emph{i.e} we let $T>0$ and we define the map 
\[ \mathcal M : u\in \mathcal B \mapsto  \tilde u = \left[ \rho - \iint_{\Sp} r p f_u(t,p,r) \,dr dp\right]_+ \,, \]
where $[\,\cdot\,]_+$ is the positive part and $f_u$ the unique mild solution on $[0,T]$ associated to $u$ thanks to Proposition \ref{prop:autonomous}. It follows that $\mathcal M$ maps $\mathcal B$  into itself. Moreover, using \eqref{eq:deriv_moment}  
\[t\mapsto \rho - \iint_{\Sp} r p f_u(t,p,r) \,dr dp \in W^{1,\infty}(0,T)\,,\]
then,  since $[\,\cdot\,]_+$ is Lipschitz, it holds that $\tilde u \in W^{1,\infty}(0,T)$ with 

\begin{equation*}
 \frac{d}{dt} \tilde u =\begin{cases}
                         0\,, &  \text{if } \ds \iint_{\Sp} r p f_u(t,p,r) \,dr dp \geq \rho\,, \\
                         \ds - \iint_{\Sp} \V(u(t),p,r)f_u(t,p,r)\, dr dp \,, & \text{otherwise.}
                        \end{cases}
\end{equation*}
a.e. $t\in(0,T)$, see \cite{Ziemer1989}*{Theorem 2.1.11}. Thus, for any $u\in \mathcal B$ by using  hypothesis on the rates (H3), it yields
\[ 
 \norm{\frac{d}{dt} \tilde u}_{L^{\infty}(0,T)} \leq  K(\rho+1) \norm{f^{in}}_{L^1(\Sp)}\,.
\]
Next, we invoke Ascoli theorem to claim that   $\mathcal M (\mathcal B) \subset \mathcal B$ is relatively compact in $C([0,T])$. Now, a Schauder fix point theorem would achieve the proof of Theorem \ref{thm:wellposed}. It remains to prove the continuity of the map $\mathcal M$. Let $(u_n)_n$ be a sequence of $\mathcal B$ converging to $u$ for the uniform norm. We need to prove that
\[ \lim_{n\rightarrow +\infty} \norm{\tilde u_n - \tilde u}_{L^\infty(0,T)} = 0\,,\]
which is done by estimating
\begin{multline*}
 \sup_{t\in(0,T)} \abs{\iint_{\Sp} r p f_{u_n}(t,p,r) \,dr dp - \iint_{\Sp} r p f_u(t,p,r) \,dr dp} \\
 \leq \sup_{t\in(0,T)} \iint_{\Sp}  p \abs{f_{u_n}(t,p,r)- f_u(t,p,r)} \,dr dp\,.
\end{multline*}
Indeed, the right hand side of this inequality goes to zero following line-to-line the proof of \cite{Collet1999}*{Lemma 5 and 6} to conclude on the one hand the continuity and on the other that, in fact, 
\[ \iint_{\Sp} r p f_u(t,p,r) < \rho\, \quad \forall t\in[0,T]\,,\]
to drop $[\,\cdot\,]_+$. Thus, there exist $u\in\mathcal B$ such that
\[ u(t) = \mathcal M(u) = \rho - \iint_{\Sp} r p f_u(t,p,r) \geq 0\,.\]
This achieves the proof of Theorem \ref{thm:wellposed}. 

\section{Numerical approximation} \label{sec:scheme}

\subsection{A conservative truncated formulation} \label{ssec:truncate_pb}

The discretization of the problem \eqref{eq:polymer}-\eqref{eq:constraint} gives rise to three main difficulties. First, the unboundedness of the space. Indeed, one of the two variables has been reduced to the interval $(0,1)$, with a physical meaning, but the $p$-variable can reach any size in $\R_+$. Thus, we decided to proceed as in \cite{Bourgade2008} and carry out a truncation of the problem considering a ``maximal reachable size'', or cut-off, $P>0$. The link between both problems, truncated and full, when $P\rightarrow +\infty$ is not taken in consideration here. The reader can refer to step 2 in the proof of Proposition \ref{prop:autonomous} to get some hints related to this topic. The purpose is to provide a converging numerical approximation of a truncated problem for a fixed $P$. The second issue arises when we look toward conservations of the system. In \cite{Bourgade2008}, the authors propose a reformulation of the coagulation operator into a divergence form. We are inspired by this method and 
adapt it to our problem. 
Indeed, this formulation appears natural for finite volume scheme and has the advantage to provide exact conservations at the discrete level. 

The starting point is the weak formulation of $Q$, see \eqref{eq:Q_identity}. One can take $\vphi(u,v) = u\indic{(0,p)}(u) \indic{(0,r)}(v)$ for some $(p,r)\in\Sp$ and  we formally get an expression of the form 
\[\frac{\partial  C}{\partial p \partial r} = p Q(f,f) \,\]
where the coagulation reads now 
\begin{multline} \label{eq:def_C}
  C(f,f)(p,r) \hfill \\ = \int_0^p \int_0^1 \int_0^{p-u} \int_0^1 u a(u,v;u',v') \indic{(0,r)}(v^\#)  f(u,v) f(u',v') \, dv'du'dv du  \\
  \hfill - \int_0^p \int_0^r u L(f)(u,v) f(u,v) \, dv du\,,
\end{multline}
where $v^\# = (uv+u'v')/(u+u')$. Now the coagulation operator has been reformulated in a manner well adapted to a finite volume scheme (the volumes averages of $f$ are brought out directly). It remains  to truncate the problem. It can be achieved in two different ways as mentioned in \cites{Bourgade2008,Filbet2004}, the authors discuss about conservative and non-conservative form. These two options can  be derived respectively by taking $a \coloneqq a \indic{(0,P)}(u+u')$ or $a \coloneqq a \indic{(0,P)}(u)\indic{(0,P)}(u')$. The first option avoids the formation of clusters larger than $P$ thus it will preserve the mass, while the second induces a loss of polymers due to the creation of larger clusters than $P$. This latter is convenient to study gelation phenomenon, see \cite{Escobedo2003} for a review on coagulation. Here, we restrict ourself to the conservative form and obtain the truncated operator by taking
\[ L_P(f)(u,v) = \int_0^{P-u} \int_0^1 a(u',v';u,v) f(u',v') dv' du'\,. \]
Then, replacing $L$ by $L_P$ in \eqref{eq:def_C} it yields, for any $(p,r)\in \Sp_P \coloneqq (0,P)\times(0,1)$, to
\begin{multline} \label{eq:def_CP}
  C_P(f,f)(p,r) \hfill \\ = \int_0^p \int_0^1 \int_0^{p-u} \int_0^1 u a(u,v;u',v') \indic{(0,r)}(v^\#)  f(u,v) f(u',v') \, dv'du'dv du  \\
  \hfill - \int_0^p \int_0^r u L_P(f)(u,v) f(u,v) \, dv du\,. 
\end{multline}
The last main issue lies in the discretization of \eqref{eq:constraint}, \emph{i.e.} the algebraic constraint driving $u$. We will not be able to properly derive an approximation of $\indic{(0,r)}(v^\#)$ in the coagulation operator which would allow us to control the sign of $\rho - \int _{\Sp_P} rpf(t,p,r)\,drdp$. Once again, we reformulate this constraint obtaining an evolution equation on $u$ by a time derivation of it. Thus the problem \eqref{eq:polymer}-\eqref{eq:constraint} reads now
\begin{equation} \label{eq:polymers_trunc}
 p\frac{\partial f}{\partial t}  +  \frac{\partial}{\partial r}  \left( \V f \right) =  \frac{\partial  C_P(f,f)}{\partial p \partial r}, \, \quad (t,p,r)\in \R_+ \times \Sp_P\,,
\end{equation}
and
\begin{equation}\label{eq:ions_trunc}
 \frac{d}{dt}u(t) = - \iint_{\Sp_P} \V(u(t),p,r)f(t,p,r) \ dr dp,\quad t\geq 0\,.
\end{equation}
The boundary condition reads now 
\begin{equation}\label{eq:boundary_trunc}
 f = 0, \qquad \text{on } \ \partial \Sp_P\,,
\end{equation}
and the initial data are
\begin{equation} \label{eq:initial_trunc}
 f(t=0,\cdot) = f^{in} \text{ over } \Sp_P  \text{ and } u(t=0) = u^{in}\,.
\end{equation}
It is now appropriate to introduce the technical assumptions used through this section:

\medskip

\noindent \textbf{H1'.} The initial distribution  $f^{in} \in L^1(\Sp_P)$ is nonnegative and $u^{in}\geq 0$, with

\begin{equation} \label{eq:regularity_fin_uin_trunc}
\rho \coloneqq u^{in} + \iint_{\Sp_P} rp f^{in}(p,r)\, dr dp < +\infty \,. 
\end{equation}

\noindent \textbf{H2'.} The coagulation rate $a\in L^\infty(\Sp_P\times\Sp_P)$ is nonnegative and 

\begin{equation} \label{hyp1_num}
  \norm{a}_{L^\infty} \leq K\,.
\end{equation}

\noindent \textbf{H3'.}  The rate functions $p \mapsto k(p,\cdot),\ l(p,\cdot) \in L^\infty\left(0,P;W^{1,\infty}(0,1)\right)$  are nonnegatives and 
\begin{equation} \label{hyp2_num}
 \norm{k}_{L^\infty\left(0,P;W^{1,\infty}\right)} + \norm{l}_{L^\infty\left(0,P;W^{1,\infty}\right)} \leq K\,.
\end{equation}

\noindent \textbf{H4'.} The function $r \mapsto  \V(u,p,r)$ is a non-increasing function:

\begin{equation} \label{hyp3_num}
\partial_r \V(u,p,r) = \partial_r k \ u - \partial_r l \leq 0 \quad \text{a.e.} \ (u,p,r)\in \R_+\times\Sp_P\,.
\end{equation}
%

\medskip

\begin{remark}
We emphasize that hypotheses H2' and H3' are not so restrictive in front of the truncation, it could allow unbounded rate on the full configuration space $\Sp$ locally bounded which seems reasonable.
\end{remark}

We are now in position to give an alternative definition to our problem \eqref{eq:polymer_resc}-\eqref{eq:constraint_resc}. 

\begin{definition}[Weak solutions 2] \label{def:solution_numeric}
Let $T>0$, a cut-off $P>0$ and let $f^{in}$ and $u^{in}$ satisfying (H1'). 
A weak solution to \eqref{eq:polymers_trunc}-\eqref{eq:ions_trunc} on $[0,T)$ is a couple $(f,u)$ of nonnegative functions, such that
\begin{equation} \label{eq:regularity_def_num}
 f\in C\left([0,T);L^1(\Sp_P)\right) \ \text{ and } \ u \in C([0,T))
\end{equation}
satisfying for all $t\in[0,T)$ and $\varphi\in C^2(\Sp_P)$
\begin{multline}\label{eq:weak_f_num}
  \iint_{\Sp_P}   pf(t,p,r) \varphi(p,r) \,  dr dp \, = \iint_{\Sp_P} p f^{in}(p,r) \varphi(p,r) \, dr dp  \hfill \\
 +  \int_0^t \iint_{\Sp_P} \left( \V(u(s),p,r) f(s,p,r) \frac{\partial \varphi}{\partial r} (p,r) + C_P(s,p,r) \frac{\partial\varphi }{\partial p \partial r} (s,p,r)\right) dr dp ds \\
 \hfill - \int_0^t  \int_0^1  C_P(s,P,r) \frac{\partial \varphi }{\partial r} (s,P,r) \, dr ds - \int_0^t\int_0^{P} C_P(s,r,1) \frac{\partial \varphi}{\partial p} (s,p,1)\,  dp ds \,,
\end{multline}
with
\begin{equation}\label{eq:weak_u_num}
 u(t) = u^{in} - \int_0^t \iint_{\Sp_P} \V(u(s),p,r) f(s,p,r) \, dp dr ds\,.
\end{equation}
\end{definition}

\begin{remark}[Consistence of Definition \ref{def:solution_numeric}]
First, we emphasize that weak formulation \eqref{eq:weak_f_num} is classically obtained after multiplying \eqref{eq:polymers_trunc} by $p$, then integrating over $(0,t)\times\Sp_P$. An integration by parts with respect to the boundary conditions \eqref{eq:boundary_trunc}. Note that the two last integrals in the right hand side correspond to the remaining terms coming from the integration of the coagulation.
Second, the regularity \eqref{eq:regularity_def_num} of $f$ together with the definition of the coagulation operator \eqref{eq:def_CP} provide that for any $t\in[0,T)$ we have $C_P(f,f) \in L^\infty\left((0,t)\times\Sp_P\right)$, $C_P(f,f)(p=P)\in L^\infty\left((0,t)\times(0,1)\right)$ and $C_P(f,f)(r=1) \in L^\infty\left((0,t)\times(0,P)\right)$.
Third, by virtue of hypothesis (H3), for any $U>0$ we get
\begin{equation*} 
\sup_{u\in(0,U)} ||\V(u,\cdot)||_{L^\infty(\Sp_P)} \leq  ||k ||_{L^\infty(\Sp_P)}U + ||l||_{L^\infty(\Sp_P)} \leq K(U+1)\,.
\end{equation*}
Thus, $\V \in L^\infty\left([0,t)\times\Sp_P\right)$. This ensures that equation \eqref{eq:weak_f_num}-\eqref{eq:weak_u_num} are well defined under such regularity and hypotheses. 
\end{remark}

\begin{remark}
A solution in the sense of Definition \ref{def:solution_numeric} regular enough, with an initial datum compactly supported in $(0,P)$, is also a solution in the sense of Definition \ref{def:solution}, \emph{i.e} on the entire space $\Sp$, at least up to a time $T$ small enough. Indeed, since $Q$ is Lipschitz by \eqref{eq:contract_Q}, the speed of propagation of the support of $f$ is finite.
\end{remark}

\begin{remark} 
In general, the definition can be relaxed by taking the solution $f$ belongs to $C\left([0,T);w-L^1(\Sp_P, pdrdp)\right)$ which is sufficient to define the formulation \eqref{eq:weak_f_num}. Nevertheless, we will see that the sequence of approximation is in fact equicontinuous for the strong topology of $L^1(\Sp_P)$ thus Definition \ref{def:solution_numeric} remains stronger but true.
\end{remark}

\subsection{The numerical scheme and convergence statement} \label{ssec:scheme}

This section is devoted to introduce an approximation of the truncated problem presented in Section \ref{ssec:truncate_pb}. Thus in the remainder of this section, both, the truncation parameter $P>0$ and the time parameter $T>0$ are fixed. Our aim is to provide a discretization of $[0,T]\times\Sp_P$ on which we will approach the problem (\ref{eq:polymers_trunc}-\ref{eq:ions_trunc}). Once the scheme is established, we present the main result, namely the convergence in a sense defined later.

Formulation \eqref{eq:polymers_trunc} allow us to use a finite volume method for the configuration space. This is approaching the average of the solution on volume controls at discrete times $t_n$ for $n\in\{0,\ldots,N\}$ such that
\[t_n =n\Dt \quad \text{with} \quad \Dt = T/N \quad \text{and}\quad  N\in\mathbb N^*\,.\]
We turn now to the discretization of the configuration space $\Sp_P$. For sake of simplicity, we consider a uniform mesh of $\Sp_P$ that is given, for some large integer $J$ and $I$, by  $\left(\Lambda_{j,i}\right)_{(j,i)\in \{0,\ldots,J\}\times\{0,\ldots,I\}}$ where
\[ \Lambda_{j,i} = (\pjmh,\pjph)\times(\rimh,\riph) \subset \Sp_P\,,\]
such that $(p_{j-1/2})_{j\in\{0,\ldots,J+1\}}$ and $(r_{i-1/2})_{i\in\{0,\ldots,I+1\}}$ are given by
\[ \pjmh = j \Dp  \quad \text{and} \quad \rimh = i \Dr\,,\]
with $\Dp = P/(J+1)<1$ and $\Dr = 1/(I+1)$.

\begin{remark}
We believe that for a non-uniform mesh it would work, we refer for instance to \cite{Bourgade2008} for a non-uniform discretization of the so-called coagulation-fragmentation equation.
\end{remark}

The average of the solution $f$ to \eqref{eq:polymers_trunc}  at a time $t_{n+1}$ on a cell $\Lambda_{j,i}$ is obtain by integration of \eqref{eq:polymers_trunc} over $[t_{n},t_{n+1})\times\Lambda_{j,i}$ and dividing by the volume of the cell $|\Lambda_{j,i}| = \Dp \Dr$. We aim to derive an induction, in order to obtain an approximation of this average at time $t_{n+1}$, knowing the approximation at time $t_n$ given by  
\[f^n_{j,i} \approx \frac{1}{|\Lambda_{j,i}|} \int_{\Lambda_{j,i}} f(t_{n},p,r)\, dr dp\,,\]
The integration of  \eqref{eq:polymers_trunc} lets appear two types of fluxes which need to be approached. First, the transport term that accounts for the association-dissociation phenomenon given by the $r$-derivative, leads to the numerical fluxes $(F^n_{j,i-1/2})_{(j,i)\in\{0,\ldots,J\}\times\{0,\ldots,I+1\}}$, consistent approximation of:
\begin{equation} \label{eq:flux_consistent}
F^n_{j,i-1/2} \simeq \frac{1}{\Dp} \int_{t_n}^{t_{n+1}}\int_{p_{j-1/2}}^{p_{j+1/2}} \V(u(t),p,r_{i-1/2}) f(t,p,r_{i-1/2}) \, dp dt\,,
\end{equation}

We will use an Euler explicit scheme in time $t$. Moreover, we use the so-called first order upwinding method  to get
\begin{equation} \label{flux_r}
 F^n_{j,i-1/2} = \V^{n +}_{j,i-1/2} f^n_{j,i-1} - \V^{n -}_{j,i-1/2} f^n_{j,i}\,,
\end{equation}
where the velocity at the interface, in function of $u^n\approx u(t_n)$, is given by 
\[\V^n_{j,i-1/2} = \V(u^n,p_{j-1/2},r_{i-1/2})\,,\] 
and using the notation $x^+ = \max(x,0)$ and $x^-=\max(-x,0)$ for any $x\in\R$. The boundary are conventionally taken, for any $j\in \{0,\ldots,J\}$, by 
\begin{equation} \label{eq:flux_r_boundary}
F^n_{j,-1/2} = F^n_{j,I+1/2} =0\,,
\end{equation}
which is in accordance with \eqref{eq:boundary_trunc} and consistent with the approximation \eqref{eq:flux_consistent}. Then the fluxes of coagulation given by the second order derivative is also approached by an Euler explicit method in time, namely our fluxes read

%
%
%
\begin{multline} \label{flux_C}
C^n_{j-1/2,i-1/2} =  \sum_{j'=0}^{j-1} \sum_{i'=0}^I \sum_{j''=0}^{(j-1)-j'} \sum_{i''=0}^I p_{j'} a_{j',i';j'',i''} \delta_{j',i';j'',i''}^{i-1} f^n_{j',i'} f^n_{j'',i''} (\Dp\Dr)^2 \hfill \\
\hfill - \sum_{j'=0}^{j-1} \sum_{i'=0}^{i-1}  \sum_{j''=0}^{J-j'} \sum_{i''=0}^I p_{j'} a_{j',i';j'',i''}  f^{n}_{j',i'} f^n_{j'',i''}   (\Dp\Dr)^2\,,
\end{multline}
where the discrete coagulation rate is 
\begin{equation} \label{eq:approach_a}
  a_{j,i;j,i'} =  \frac{1}{|\Lambda_{j,i}|\times |\Lambda_{j',i'}|} \int_{\Lambda_{j,i}\times\Lambda_{j',i'} } a(p,r;p',r')\, dr dp dr'dp'\,,
\end{equation}
and the characteristic function $\indic{(0,r)}(v^\#)$ is approached by
\begin{equation} \label{approach_indic}
 \delta_{j',i';j'',i''}^{i-1} = \begin{cases}  1 & \text{if } V^\#_{j',i';j'',i''} = \frac{r_{i'+1/2}p_{j'+1/2} + r_{i''+1/2}p_{j''+1/2} }{ p_{j'-1/2}+p_{j''-1/2} } < r_{i-1/2} \\ 0 & \text{otherwise.} \end{cases}
\end{equation}
Finally, we use the convention that
\begin{equation} \label{eq:boundary_C}
 C^n_{-1/2,i-1/2} = C^n_{j-1/2,-1/2} = 0, \quad \forall (j,i)\,,
\end{equation}
which is consistent with the fact that $C_P(t,0,r) = C_P(t,p,0)=0$ by definition of $C_P$ in \ref{eq:def_CP}.

Now, we are almost ready to define the scheme. Indeed, it remains to approach the initial data \eqref{eq:initial_trunc} which are classically obtained, for $(j,i)\in \{0,\ldots,J\}\times\{0,\ldots,I\}$, by 
\begin{equation} \label{eq:initial_f0_trunc}
f_{j,i}^{0} = \frac{1}{|\Lambda_{j,i}|} \int_{\Lambda_{j,i}} f^{in}(p,r)\ dr dp\,.
\end{equation}
and which is nothing but
\begin{equation}\label{eq:initial_u0_trunc}
 u^0 = u^{in}\,.
\end{equation}
Thus, the scheme is defined as follows. 

\begin{definition}[Numerical scheme] \label{def:scheme}
Let us consider the discretization  mentioned above and a given initial data \eqref{eq:initial_f0_trunc}-\eqref{eq:initial_u0_trunc}. The numerical scheme gives us a sequence $(f^n_{j,i})_{n,j,i}$ and $(u^n)_n$, for $n\in\{0,\ldots,N\}$ and $(j,i)\in\{0,\ldots,J\}\times\{0,\ldots,I\}$, defined recursively by 
\begin{equation}\label{eq:scheme_f}
 p_j f^{n+1}_{j,i} =  p_j f^n_{j,i} - \frac{\Dt}{\Dr}\left( F^n_{j,i+1/2}  - F^n_{j,i-1/2} \right)   + \frac{\Dt}{\Dr\Dp}C^n_{j,i}\,,
\end{equation}
and
\begin{equation}\label{eq:scheme_u}
u^{n+1} = u^n - \Dt \sum_{j=0}^J \sum_{i=0}^I F^n_{j,i-1/2} \,\Dr\Dp\,,
\end{equation}
where
\begin{equation} \label{eq:flux_Cji}
C^n_{j,i}=\left(C^n_{j+1/2,i+1/2} - C^n_{j+1/2,i-1/2} \right) - \left( C^n_{j-1/2,i+1/2} - C^n_{j-1/2,i-1/2} \right)\,.
\end{equation}
\end{definition}

In the above definition, the coagulation is written with fluxes defined by \eqref{flux_C}. But, note that it can be also expressed as follows
\begin{multline} \label{def_Cji}
C_{j,i}^n =  \sum_{j'=0}^j \sum_{i'=0}^I \sum_{i''=0}^I  p_{j'} a_{j',i';j-j',i''} f^n_{j',i'} f^n_{j-j',i''} \delta_{j',i';j-j',i''}^{i,i-1} \, (\Dp\Dr)^2  \\
-p_{j} \left(  \sum_{j'=0}^{J-j} \sum_{i'=0}^I   a_{j,i;j',i'}  f^n_{j',i'} \Dp\Dr \right) f^{n}_{j,i} \, \Dp\Dr\,.
\end{multline}
where 
\begin{equation*}
 \delta_{j',i';j-j',i''}^{i,i-1} = \begin{cases}  1 & \text{if } r_{i-1/2} \leq V^\#_{j',i';j-j',i''}  < r_{i-1/2} \\ 0 & \text{otherwise.} \end{cases}
\end{equation*}
This is obtained when reordering summation behind \eqref{eq:flux_Cji}. Such a formulation is not only simpler to implement numerically, but also useful in several estimations in the next section. We also remark, by virtue of  \eqref{eq:boundary_C} and \eqref{eq:flux_Cji}, that the coagulation satisfy  
\begin{equation} \label{eq:C_conserv}
 \sum_{j=0}^J \sum_{i=0}^I C_{j,i}^n =0, \quad \forall n\in \{0,\ldots,N\}\,,
\end{equation}
which will ensure  mass conservation at the discrete level.

Now the scheme is stated, we focus on its convergence. This will be achieved by a well-suited construction of sequences of approximations. For this purpose, we let  $h=\max(\Dr,\Dp,\Dt)$.

\begin{definition}[Sequences of approximations] \label{def:seq_approx}
Let the sequences $(f^n_{j,i})_{n,j,i}$ and $(u^n)_n$ construct by virtue of Definition \ref{def:scheme}. We define the piecewise constant approximation $f_h$  on $[0,T)\times\Sp_P$ by
\begin{equation} \label{eq:piecewise_fh}
 f_h(t,p,r) = \sum_{n=0}^{N-1} \sum_{j=0}^J \sum_{i=0}^I f_{j,i}^n \indic{\Lambda_{j,i}}(p,r) \indic{[t_n,t_{n+1})}(t)\,, 
\end{equation}
and then the piecewise linear (in time) approximation $\tilde f_h$
\begin{multline}\label{eq:piecewise_linear_fh}
  \tilde f_h(t,p,r) = \sum_{n=0}^{N-1}\sum_{j=0}^J \sum_{i=0}^I \left(\frac{f^{n+1}_{j,i}-f^n_{j,i}}{\Dt}(t-t_n) + f^n_{j,i}\right) \\
  \times \indic{\Lambda_{j,i}}(p,r)\indic{[t_n,t_{n+1})}(t)\,.
\end{multline}
Moreover, we define the piecewise approximation of $u$ on $[0,T)$
\begin{equation} \label{eq:piecewise_u}
 u_h(t) = \sum_{n=0}^{N-1} u^n  \indic{[t_n,t_{n+1})}(t), \quad \text{on}\ [0,T)\,.
\end{equation}
\end{definition}
Here, we mention that under such definition both approximations satisfy at time $t=0$ the same initial condition which is given by   $f_h(0,p,r) = \tilde f_h(0,p,r) = f^{in}_h(p,r)$ where
\begin{equation} \label{eq:piecewise_fin}
  f^{in}_h(p,r) =  \sum_{j=0}^J \sum_{i=0}^I f^0_{j,i} \indic{\Lambda_{j,i}}(p,r),  \quad \text{on}\ \Sp_P\,.
\end{equation}
and that $u_h(0) = u^{in}$. We are now ready to state the convergence result we obtain.

\begin{theorem}[Convergence] \label{thm:convergence}
 Let $T>0$, assume that $f^{in}$ and $u^{in}$ satisfy (H1'), and that hypotheses (H2') to (H4)' are fulfilled. Moreover, we make the stability assumption that
 \begin{equation} \label{eq:CFL}
  4\frac{\Dt}{\Dr} || \V ||_{L^\infty((0,U_T)\times\Sp_P)} < 1 \text{ and } 2 K M^{in}(1+P) \Dt < 1\,,
 \end{equation}
 where 
 \begin{equation} \label{eq:def_bound}
  U_T=e^{KM^{in}T}u^{in} \text{ and }  M^{in} =  \int_{\Sp_p} f^{in}(p,r)\, dr dp\,.
 \end{equation}
 Then there exists a couple $(f,u)$ solution of the problem (\ref{eq:polymers_trunc}-\ref{eq:ions_trunc}) in the sense of Definition \ref{def:solution_numeric} such that, up to a subsequence (not relabeled)
 \begin{align}
  & f_h \xrightharpoonup[h\rightarrow 0]{} f, & \ w-L^1((0,T)\times\Sp_P)\,, \label{eq:cv1} \\
  & \tilde f_h \xrightarrow[h\rightarrow 0]{} f, & C\left([0,T];w-L^1(\Sp_P)\right)\,, \label{eq:cv2}\\
  & u_h \xrightarrow[h\rightarrow 0]{} u,  &  \forall t\in [0,T]\,. \label{eq:cv3}
 \end{align}
 with $\norm{f_h - \tilde f_h}_{L^\infty(0,T;L^1(\Sp_P))} \rightarrow 0$ when $h\rightarrow 0$.
\end{theorem}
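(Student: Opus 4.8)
\textit{Overview and a priori estimates.} The plan is to follow the usual finite-volume program in the weak-$L^1$ framework: derive uniform bounds, upgrade them to an equi-integrability (weak stability) estimate, extract a limit by compactness, and pass to the limit in the discrete weak form. First I would collect uniform estimates on the scheme of Definition \ref{def:scheme}. Rewriting \eqref{eq:scheme_f} in the conservative variable $g^n_{j,i}=p_j f^n_{j,i}$, the upwind transport part \eqref{flux_r} becomes, under the first half of the stability assumption \eqref{eq:CFL}, a convex combination of neighbouring values, while the second half of \eqref{eq:CFL} controls the depletion part of the coagulation \eqref{def_Cji}; together they guarantee $f^n_{j,i}\geq 0$ for all $n$. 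From \eqref{eq:scheme_u} and $\norm{\V}_{L^\infty}\leq K(U+1)$ one gets $0\leq u^n\leq U_T$ with $U_T$ as in \eqref{eq:def_bound}. Summing \eqref{eq:scheme_f} against $\Dp\Dr$ and using the vanishing boundary fluxes \eqref{eq:flux_r_boundary} together with a discrete analogue of \eqref{eq:Q_negative} yields the number bound $\sum_{j,i} f^n_{j,i}\Dp\Dr\leq M^{in}$, and the exact discrete conservation \eqref{eq:C_conserv} gives conservation of the first moment $\sum_{j,i} p_j f^n_{j,i}\Dp\Dr$. All these bounds are uniform in $h=\max(\Dr,\Dp,\Dt)$.

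\textit{Weak stability principle.} The heart of the argument, and the only place where boundedness is not enough, is the equi-integrability of $(f_h)$. Since $f^{in}\in L^1(\Sp_P)$, the de la Vall\'ee--Poussin theorem furnishes a nonnegative convex function $\Phi$ with $\Phi(0)=0$, $\Phi(s)/s\to\infty$ and moderate growth ($s\Phi'(s)\leq C\Phi(s)$) such that $\iint_{\Sp_P}\Phi(f^{in})\,drdp<\infty$. I would then establish a discrete Gronwall inequality of the form $\sum_{j,i}\Phi(f^{n+1}_{j,i})\Dp\Dr\leq (1+C\Dt)\sum_{j,i}\Phi(f^{n}_{j,i})\Dp\Dr+C\Dt$, in which convexity of $\Phi$ absorbs the upwind transport step (a convex combination by \eqref{eq:CFL}) and the $L^\infty$ bound \eqref{hyp1_num} on $a$, combined with the number and moment bounds, controls the coagulation contribution. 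This yields $\sup_h\sup_n\sum_{j,i}\Phi(f^n_{j,i})\Dp\Dr\leq C(T)$, hence by the converse de la Vall\'ee--Poussin criterion the family $(f_h)$ is uniformly integrable on $(0,T)\times\Sp_P$. This is the improved weak stability statement alluded to in the introduction.

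\textit{Time regularity, compactness and extraction.} Estimating directly from \eqref{eq:scheme_f} the increment $\abs{\iint_{\Sp_P} p\,(\tilde f_h(t)-\tilde f_h(s))\,\varphi\,drdp}$ for $\varphi\in C^2(\Sp_P)$ gives equicontinuity of $\tilde f_h$ in $C([0,T];w-L^1(\Sp_P))$, while the bound $\norm{f^{n+1}-f^n}_{L^1}\leq C\Dt$ read off \eqref{eq:scheme_f} yields $\norm{f_h-\tilde f_h}_{L^\infty(0,T;L^1(\Sp_P))}\to 0$. Combining equi-integrability with the uniform $L^1$ bound, the Dunford--Pettis theorem makes each time slice weakly relatively compact in $L^1(\Sp_P)$; together with the time-equicontinuity, a refined Arzel\`a--Ascoli theorem in $C([0,T];w-L^1(\Sp_P))$ produces a subsequence with $\tilde f_h\to f$ as in \eqref{eq:cv2}, whence \eqref{eq:cv1}. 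The uniform Lipschitz-in-time bound on $(u^n)$ coming from \eqref{eq:scheme_u} gives \eqref{eq:cv3} by the scalar Arzel\`a--Ascoli theorem.

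\textit{Passage to the limit.} Finally I would pass to the limit in the discrete version of \eqref{eq:weak_f_num}-\eqref{eq:weak_u_num}. In the transport and $u$-terms, $\V(u_h,\cdot)$ is uniformly bounded and converges a.e.\ (since $u_h\to u$), so pairing it against $f_h\rightharpoonup f$ and invoking equi-integrability identifies the limit. I expect the main obstacle to be the quadratic coagulation term, where weak convergence does not pass to products. The resolution is to regard the assembled coagulation tested against $\varphi$ as $\iint_{\Sp_P} f_h\,\Psi_h\,drdp$, with $\Psi_h(p,r)=\iint_{\Sp_P} a(p,r;p',r')\,[\varphi(p+p',r^\#)-\varphi(p,r)-\varphi(p',r')]\,f_h(p',r')\,dr'dp'$ (plus the boundary contributions at $p=P$ and $r=1$); since $a\in L^\infty$ and $\varphi\in C^2$, the family $\Psi_h$ is bounded in $L^\infty$ and converges a.e.\ to the analogue $\Psi$ built from $f$, so equi-integrability of $(f_h)$ again gives $\iint f_h\Psi_h\to\iint f\Psi$. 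What remains is consistency: one must check that the discrete kernel \eqref{eq:approach_a}, the indicator approximation \eqref{approach_indic} and the second-difference assembly \eqref{eq:flux_Cji} are Riemann-sum and finite-difference approximations of \eqref{eq:def_CP}, so that the limit coincides with the continuous weak formulation. Passing to the limit in \eqref{eq:scheme_u} gives \eqref{eq:weak_u_num}, and the moment bounds of the first step survive in the limit, so $(f,u)$ is a solution in the sense of Definition \ref{def:solution_numeric}.
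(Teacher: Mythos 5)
Your first three steps coincide with the paper's own route: the discrete estimates are Proposition \ref{prop:discrete} and Corollary \ref{cor:estim_continuous}, the De La Vall\'ee-Poussin propagation is Lemma \ref{lem:uniform_int}, and the extractions are done exactly as you say via Dunford--Pettis and Ascoli (note only that $u_h$ is piecewise constant in time, hence not continuous, so the paper invokes Helly's selection theorem for $BV$ functions rather than Arzel\`a--Ascoli; your Lipschitz bound on $(u^n)$ still makes this work). The genuine gap is in your final step, the limit passage in the coagulation term. First, the representation you propose, $\iint f_h\Psi_h$ with kernel $a(p,r;p',r')[\varphi(p+p',r^\#)-\varphi(p,r)-\varphi(p',r')]$, is not what the scheme computes: the scheme discretizes the conservative form of $pQ$ with the conservative truncation, so the discrete pairing $\sum_{j,i}C^n_{j,i}\varphi_{j,i}$ built from \eqref{def_Cji} corresponds to the kernel $p\,a(p,r;p',r')\indic{(0,P)}(p+p')\,[\varphi(p+p',r^\#)-\varphi(p,r)]$ (equivalently, after symmetrization, the version with weights $(p+p')$, $p$, $p'$), i.e.\ the $p$-weights and the truncation are missing from your $\Psi_h$. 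More importantly, the solution concept you must verify, Definition \ref{def:solution_numeric}, contains the boundary integrals at $p=P$ and $r=1$, which are pairings of the flux $C_P$ itself against $\partial_r\varphi$ and $\partial_p\varphi$, not of $f$ against a bounded kernel. These cannot be reached by a ``tested-against-$\varphi$'' device alone; one needs a pointwise-convergent reconstruction $C_{P,h}(t,p,r)\to C_P(t,p,r)$ of the flux, which is exactly what the paper's functions $\Phi^{1,h}$, $\Phi^{2,h}$ together with Lemmas \ref{lem:cvg} and \ref{lem:cv_phi} provide.

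Second, the ``consistency check'' you defer is the actual crux, not a routine Riemann-sum verification. The discrete indicator \eqref{approach_indic}, reconstructed as $\indic{(0,R_h^+(r))}(V_h^\#)$, is a discontinuous functional of the mesh, and its a.e.\ convergence to $\indic{(0,r)}(v^\#)$ is precisely what allows the product-convergence lemma to be applied; the paper proves it inside Lemma \ref{lem:cv_phi} by showing that the discrepancy set $A_h=\{V_h^\#\leq R_h^+(r)\leq v^\#\}$ collapses onto the level set $\{v^\#=r\}$, a Lebesgue-null set. Without this argument the identification of the limit coagulation term fails, so it cannot be left as an unexamined consistency statement. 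Finally, a smaller point you should make explicit: the a.e.\ convergence of $\Psi_h$ (or of any reconstruction built from $f_h$) requires, for each fixed $t$, weak-$L^1(\Sp_P)$ convergence of $f_h(t,\cdot)$, which does not follow from \eqref{eq:cv1} alone; it does follow by writing $f_h=(f_h-\tilde f_h)+\tilde f_h$ and combining \eqref{eq:cv2} with $\norm{f_h-\tilde f_h}_{L^\infty(0,T;L^1(\Sp_P))}\to0$, which is the same substitution trick the paper performs when it replaces $C_{P,h}$ by $\tilde C_{P,h}$. You have all the ingredients for this step, but it has to be carried out, since otherwise the quadratic term is paired with a sequence whose pointwise limit is not justified.
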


\begin{remark}
At this stage, we emphasize that hypothesis \eqref{eq:CFL} involved in Theorem \ref{thm:convergence} is classical. The first one is the so-called \emph{Courant-Friedrich-Lax} condition (or CFL condition) which ensures a well-know convex formulation of the transport part. The second is to control the sign of the coagulation term. 
\end{remark}

For technical reasons we consider a carefully reconstruction of the coagulation term. This one involves the coagulation kernel which is approached by
\begin{equation*}
 a_h(p,r;p',r') = \sum_{j=0}^J \sum_{i=0}^{I} \sum_{j'=0}^J \sum_{i'=0}^{I} a_{j,i;j',i'} \indic{\Lambda_{j,i}}(p,r) \indic{\Lambda_{j',i'}}(p',r')\,.
\end{equation*}
where the $a_{j,i;j',i'}$ are given by \eqref{eq:approach_a}. A classical result of piecewise approximation, since $a$ satisfy \eqref{hyp1_num}, is  
\begin{equation*}
 a_h \underset{h\rightarrow 0}{\longrightarrow} a, \quad L^1(\Sp_P).
\end{equation*}
Further, we mention that  $a_h$ converges towards $a$ a.e.  $\Sp_P\times\Sp_P$, which holds true up to a subsequence. Also, we will reconstruct the characteristic function \eqref{approach_indic}, this delicate point responsible of a lack of conservation in our scheme will be discussed later, as well as the rate $\V$.
 
\begin{remark}
 In the remainder of the paper most of the result will involve more extraction of sequences, so that, for sake of clarity, we use the same index even for all the sequences, even if we extract a new subsequence of the previous one.  
\end{remark}

\subsection{Estimations and weak compactness}


The aim of this section is to introduce the well-suited estimations leading to the required compactness to prove Theorem \ref{thm:convergence}. The method used here has been extensively developed in the field of coagulation-fragmentation equations and/or LS equation and their derivatives, see Section \ref{sec:related_works} for more details. Briefly, here we proceed in two steps. We provide some discrete properties of the scheme. Then, we establish compactness results on both $f_h$ and $\tilde f_h$ and we treat $u_h$ by classical results on sequences of bounded variation functions.

First of all, we introduce here some useful notation which will help us lighten the next computation:
\begin{align*}
 & P_h(p) = \sum_{j=0}^J p_j \indic{[\pjmh,\pjph)}\,,& &R_h(r) = \sum_{i=0}^I r_i \indic{[\rimh,\riph)}\,,  \\
 & P_h^\pm(p) = \sum_{j=0}^J p_{j\pm 1/2} \indic{[\pjmh,\pjph)}\,,& &R_h^\pm(r) = \sum_{i=0}^I r_{i\pm 1/2} \indic{[\rimh,\riph)}\,,
\end{align*}
and
\[ \Theta_{\Dt}(t) = \sum_{n=0}^{N-1} t_n \indic{[t_n,t_{n+1})}(t)\,.\] 
Moreover, for sake of conciseness we denote $\sigma \coloneqq \{0,\ldots,J\}\times\{0,\ldots,I\}$, when no confusion on $I$ and $J$ holds. Then, as long as it does not entail any ambiguity, we use
\[\begin{array}{lcll}
   \sigma^* &= &\{0,\ldots,J-j\}\times\{0,\ldots,I\} & \text{if indexed on } (j',i')\,, \\
   \sigma^{**} &=& \{0,\ldots,J-j'\}\times\{0,\ldots,I\}  & \text{if indexed on } (j'',i'')\,. 
  \end{array}
\]
Also, we denote $l=(j,i)$, $l'=(j',i')$ and $l''=(j'',i'')$. Thus, the summation notation in a compact form are
\begin{equation*}
 \sum_{j=0}^J \sum_{i=0}^I = \sum_{l\in\sigma}, \, \sum_{j=0}^J \sum_{i=0}^I \sum_{j'=0}^{J-j} \sum_{i'=0}^I = \sum_{l\in\sigma}\sum_{l'\in\sigma^*}, \, \sum_{j'=0}^J \sum_{i'=0}^I \sum_{j''=0}^{J-j'} \sum_{i''=0}^I = \sum_{l'\in\sigma}\sum_{l''\in\sigma^{**}}.
\end{equation*}

\subsubsection{Discrete estimations}

To begin, we establish some properties of the sequences constructed in Definition \ref{def:scheme}. We emphasize that the original continuous problem involves, a decrease of the moment of order 0 \eqref{eq:Q_negative}, while the $p$-moment of order 1 \eqref{eq:pQ_null} is conserved and the total balance \eqref{eq:constraint} remains constant. This latter will be discussed later, while the two other remain true at the discrete level and will be part of the next proposition.  To that, we introduce the discrete moments of the sequences defined in Definition \ref{def:scheme}, given by
\begin{equation} \label{eq:moment_discrete}
M^n_{0,h} =  \sum_{l\in\sigma} f_{j,i}^n \,\Dr \Dp, \quad \text{and} \quad M^n_{1,h} =  \sum_{l\in\sigma} p_j f_{j,i}^n \,\Dr \Dp\,.
\end{equation}
So, the next proposition establish the basic properties of our scheme and particularly of these moments.

\begin{proposition}[Non-negativeness, moments and conservation] \label{prop:discrete}
 Let $f^{in}$ and  $u^{in}$ satisfying (H1'), together with $(f^n_{j,i})_{(n,j,i)\in\sigma}$ and $(u^n)_n$ construct by virtue of Definition \ref{def:scheme}. We assume that  the stability condition \eqref{eq:CFL} holds true. Then, the sequences $(f^{n}_{j,i})_{(n,j,i)}$ and $(u^{n})_n$  are both  nonnegatives and satisfy for all $n\in\{0,\ldots,N-1\}$:
 \[ 0 \leq M_{0,h}^{n+1} \leq M_{0,h}^{n} \leq M^{in} \,,\]
 and $n\in\{0,\ldots,N\}$
 \[  0\leq M_{1,h}^{n} = M_{1,h}^0  \quad \text{and} \quad 0\leq  u^{n} \leq U_T\,,\]
 where $M^{in}$ and $U_T$ are both given in \eqref{eq:def_bound}. Moreover, there exist a constant $C>0$ independent on $h$ such that for all $n\in\{0,\ldots,N-1\}$ we have
  \[\sum_{(j,i)\in\sigma} |f^{n+1}_{j,i} -f^n_{j,i}| \Dr\Dp \leq C \Dt \quad \text{and} \quad  |u^{n+1}-u^n| \leq C \Dt\,. \]
\end{proposition}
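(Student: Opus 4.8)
The plan is to argue by induction on $n$, establishing the assertions in the order: non-negativity, then the two moment identities, then the bounds on $u^n$, and finally the two time-increment estimates. All four algebraic properties reduce to bookkeeping on \eqref{eq:scheme_f}--\eqref{eq:scheme_u} once the sign structure is made explicit.

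\textbf{Non-negativity.} Dividing \eqref{eq:scheme_f} by $p_j=(j+\tfrac12)\Dp>0$ I would write the update as
\[ f^{n+1}_{j,i}=\Big(1-\tfrac{\Dt}{p_j\Dr}\big(\V^{n+}_{j,i+1/2}+\V^{n-}_{j,i-1/2}\big)-\Dt\,\lambda^n_{j,i}\Big)f^n_{j,i}+\tfrac{\Dt}{p_j\Dr}\V^{n-}_{j,i+1/2}f^n_{j,i+1}+\tfrac{\Dt}{p_j\Dr}\V^{n+}_{j,i-1/2}f^n_{j,i-1}+\tfrac{\Dt}{p_j\Dr\Dp}G^n_{j,i}, \]
where $G^n_{j,i}\ge0$ is the coagulation gain read off from \eqref{def_Cji} and $\lambda^n_{j,i}=\sum_{j',i'}a_{j,i;j',i'}f^n_{j',i'}\Dp\Dr\le KM^{in}$ is the loss coefficient. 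Granting $f^n\ge0$, the off-diagonal and gain contributions are non-negative, so it suffices that the bracket be non-negative. Here (H4') is essential: since $r\mapsto\V$ is non-increasing (cf. \eqref{hyp3_num}), at most one of $\V^{n+}_{j,i+1/2},\V^{n-}_{j,i-1/2}$ is nonzero, whence $\V^{n+}_{j,i+1/2}+\V^{n-}_{j,i-1/2}\le\norm{\V}_{L^\infty}$; together with $\lambda^n_{j,i}\le KM^{in}$, the two conditions in \eqref{eq:CFL} make the bracket non-negative. For $u^{n+1}\ge0$ I would use $\V^+=(ku-l)^+\le ku\le Ku$ (as $k,l\ge0$), so that the positive part of $\sum_{j,i}F^n_{j,i-1/2}\Dr\Dp$ is at most $Ku^nM^{in}$, and \eqref{eq:scheme_u} gives $u^{n+1}\ge(1-\Dt KM^{in})u^n\ge0$ by the second inequality in \eqref{eq:CFL}.

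\textbf{Moments.} For $M_{1,h}$ I would sum \eqref{eq:scheme_f} over $(j,i)$ with weight $\Dr\Dp$: the transport fluxes telescope in $i$ and vanish by \eqref{eq:flux_r_boundary}, while $\sum_{j,i}C^n_{j,i}=0$ by \eqref{eq:C_conserv}, giving $M^{n+1}_{1,h}=M^n_{1,h}$. For $M_{0,h}$ I would sum the scheme \emph{divided by} $p_j$; the transport part again telescopes to zero, leaving $M^{n+1}_{0,h}=M^n_{0,h}+\Dt\sum_{j,i}\tfrac1{p_j}C^n_{j,i}$. The crux is $\sum_{j,i}\tfrac1{p_j}C^n_{j,i}\le0$: from \eqref{def_Cji}, summing the gain over $i$ collapses the indicator ($\sum_i\delta=1$), and after symmetrizing in $(j',j-j')$ with \eqref{eq:a_symmetric_resc} the weight $p_{j'}$ becomes $\tfrac12(p_{j'}+p_{j-j'})=\tfrac12(j+1)\Dp$; dividing by $p_j=(j+\tfrac12)\Dp$ produces the factor $\tfrac{j+1}{2j+1}<1$ in front of the gain, whereas the loss carries the factor $1$ over the same range of ordered index pairs. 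Hence gain $\le$ loss, so $M^{n+1}_{0,h}\le M^n_{0,h}$, and induction yields $M^n_{0,h}\le M^0_{0,h}=M^{in}$. For the upper bound on $u^n$ I would close a discrete Grönwall inequality from \eqref{eq:scheme_u}: using $\norm{\V}_{L^\infty((0,u^n)\times\Sp_P)}\le K(u^n+1)$ and $M^n_{0,h}\le M^{in}$ gives $u^{n+1}+1\le(u^n+1)(1+2KM^{in}\Dt)$, whence $u^n\le U_T$; this is consistent with \eqref{eq:CFL}, which is posed on $(0,U_T)$, because the induction keeps $u^n\le U_T$.

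\textbf{Time increments.} For $u$, \eqref{eq:scheme_u} with $\abs{F^n_{j,i-1/2}}\le\norm{\V}(f^n_{j,i-1}+f^n_{j,i})$ gives $\abs{u^{n+1}-u^n}\le2\Dt\norm{\V}M^{in}$ at once. For $f$ this is the delicate estimate, and the one I expect to be the main obstacle. I would exploit that, within each fixed row $j$, the transport update $f\mapsto f-\tfrac{\Dt}{p_j\Dr}(F_{i+1/2}-F_{i-1/2})$ is a monotone conservative upwind scheme which, under the first inequality of \eqref{eq:CFL}, is an $\ell^1$-contraction in $i$; summing over $j$ makes the whole transport step $T_h$ an $L^1(\Sp_P)$-contraction. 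Writing $f^{n+1}=T_hf^n+\Dt\,\mathcal K(f^n)$ with $\mathcal K$ the discrete division-by-$p$ coagulation operator, the discrete analogue of \eqref{eq:contract_Q} bounds $\mathrm{Lip}(\mathcal K)\le4KM^{in}$ on $\{M_{0,h}\le M^{in}\}$, so $\norm{f^{n+1}-f^n}_{L^1}\le(1+4KM^{in}\Dt)\norm{f^n-f^{n-1}}_{L^1}$ and inductively $\norm{f^{n+1}-f^n}_{L^1}\le e^{4KM^{in}T}\norm{f^1-f^0}_{L^1}$. The coagulation part of $\norm{f^1-f^0}_{L^1}$ is controlled by $2K(M^{in})^2\Dt$ through the discrete form of \eqref{eq:L1_Q}. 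The genuinely hard point is the transport part of this first increment, $\norm{T_hf^0-f^0}_{L^1}$: a naive flux bound only gives $O(\Dt/\Dr)$, so obtaining $O(\Dt)$ uniformly in $h$ is exactly where the $L^1$-contraction and the CFL balance $\Dt\lesssim\Dr$ must be combined carefully, and where a discrete control of the variation of the flux $\V f$ would have to be brought in.
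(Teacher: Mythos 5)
Your induction has the same skeleton as the paper's proof, and most of the algebraic steps coincide: first-moment conservation via telescoping fluxes and \eqref{eq:C_conserv}, non-negativity of $u^{n+1}$ from $F^n_{j,\imh}\le \V^{n+}_{j,\imh}f^n_{j,i-1}\le Ku^nf^n_{j,i-1}$, and $\abs{u^{n+1}-u^n}\le 2\Dt\,\norm{\V}_{L^\infty}M^{in}$ are exactly the paper's arguments. Your zeroth-moment estimate is a legitimate variant: the paper compares gain and loss termwise using $p_{j'}/p_{j+j'}\le1$ after re-indexing, while you symmetrize first and obtain the factor $(j+1)/(2j+1)$ (which equals $1$ at $j=0$, so the inequality is not strict, but $\le 1$ is all that is needed). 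Your positivity argument is the same convex-combination idea as the paper's reformulation \eqref{eq:convexity}, with the coagulation loss folded into the diagonal coefficient instead of being paid out of a reserved $\tfrac12 f^n_{j,i}$. Note, however, that your bracket contains $\Dt/(p_j\Dr)$ whereas \eqref{eq:CFL} only controls $\Dt/\Dr$, so for cells with $p_j<1$ the claim that ``the two conditions in \eqref{eq:CFL} make the bracket non-negative'' does not follow as written; the paper's own passage from \eqref{eq:pos_inter} to $f^{n+1}_{j,i}\ge\tfrac12 f^n_{j,i}+\tfrac{\Dt}{\Dr\Dp}\tfrac1{p_j}C^n_{j,i}$ involves the same division by $p_j$, so this is a shared subtlety rather than a defect specific to your write-up.

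There are two genuine gaps. The first concerns the bound $u^n\le U_T$. Your Gr\"onwall inequality $u^{n+1}+1\le(u^n+1)(1+2KM^{in}\Dt)$ gives $u^n\le e^{2KM^{in}T}(u^{in}+1)-1$, which is \emph{not} the claimed bound $U_T=e^{KM^{in}T}u^{in}$, and this is not a cosmetic matter of constants: $U_T$ is exactly the quantity on which the CFL condition \eqref{eq:CFL} is posed, and it is $\norm{\V}_{L^\infty((0,U_T)\times\Sp_P)}$ that you must invoke in the positivity step. Your remark that ``the induction keeps $u^n\le U_T$'' is therefore circular — your own estimate does not keep it, and once $u^n$ can exceed $U_T$ the induction no longer closes. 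The paper instead proves the one-step bound $u^{n+1}\le u^n+\Dt\sum_{\jiS}\V^{n-}_{j,\imh}f^n_{j,i}\Dr\Dp\le(1+KM^{in}\Dt)u^n$ (which rests on controlling $\V^{n-}$ by $Ku^n$) and iterates it to land exactly on $U_T$.

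The second, and main, gap is the $L^1$ time-increment estimate for $f$: you do not prove it. Your contraction bootstrap reduces everything to $\norm{f^1-f^0}_{L^1}$ and you then concede that the transport part of this first increment is only $O(\Dt/\Dr)$ with the tools at hand; the statement is thus left open at its hardest point (and the $L^1$-contraction of the transport step again hides a $\Dt/(p_j\Dr)$ CFL requirement). The paper uses no contraction machinery at all: it estimates each step directly via $\abs{f^{n+1}_{j,i}-f^n_{j,i}}\Dr\Dp\le\Dt\tfrac{\Dp}{p_j}\abs{F^n_{j,\iph}-F^n_{j,\imh}}+\Dt\tfrac1{p_j}\abs{C^n_{j,i}}$, bounds the coagulation sum by $2K(M^{in})^2$ (using $p_{j'}\le p_j$ in the gain term of \eqref{def_Cji}), and bounds the transport sum using $p_j\ge\Dp/2$ together with $\abs{F^n_{j,\imh}}\le\norm{\V}_{L^\infty}(f^n_{j,i-1}+f^n_{j,i})$. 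To be fair, your suspicion points at a real delicacy: the paper's transport bound tacitly treats $\sum_{\jiS}f^n_{j,i}$ as if it were $\le M^{in}$, when the moment bound only gives $\sum_{\jiS}f^n_{j,i}\Dr\Dp\le M^{in}$, so the uniform-in-$h$ Lipschitz claim is exactly as fragile as you sensed. But as a proof attempt, yours stops short of the claimed inequality, whereas the expected argument is the paper's direct flux-and-moment estimate, not a contraction bootstrap.
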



\begin{proof}
We prove this proposition by induction.  We suppose that $(f^n_{j,i})_{(j,i)}$ is a nonnegative sequence and $u^n$ a nonnegative data, both given for some $n\in\{0,\ldots,N-1\}$ satisfying 
\[ 0 \leq M_{0,h}^n \leq M^{in}, \quad \text{and}  \quad  u^n \leq (1+  \Dt K M^{in})^n u^0 \,.\]
We easily check that it is true for $n=0$. Indeed, the non negativeness is given by hypothesis (H1) on $f^{in}$ and $u^{in}$ together with the initial approximation \eqref{eq:initial_f0_trunc}-\eqref{eq:initial_u0_trunc}. Then, by the definitions of $f^{in}_h$ in \eqref{eq:piecewise_fin} and the constant $M^{in}$ in \eqref{eq:def_bound}, we get
\begin{align} 
& M^{0}_{0,h} = \iint_{\Sp_P} f^{in}_h(p,r)\ dr dp = M^{in}, \label{eq:bound_moment0}
\intertext{and}
& M^{0}_{1,h} =  \iint_{\Sp_P} P_h(p) f^{in}_h(p,r)\ dr dp \leq P M^{in}. \label{eq:bound_moment1}
\end{align}
The rest of the proof is separated in four steps. We start by estimate the moments, next we bound $u^{n+1}$, then we prove the non negativeness of $f^{n+1}$ and finally we prove the last two ``time'' estimations of the proposition.

\smallskip

\noindent \textit{Step 1. Moments estimation}. We first remark the fluxes $F^n$ are null at the boundary, it is \eqref{eq:flux_r_boundary}, thus
\[ \sum_{i=0}^I (F^{n}_{j,i+1/2} - F^{n}_{j,i-1/2})  = 0, \quad \forall j \,.\]
It remains to estimate the contribution of the coagulation in the moments. The first order moment is naturally conserved from the construction of our scheme, it is \eqref{eq:C_conserv}. These two remarks lead, by  equation \eqref{eq:scheme_f} giving the $f^{n+1}_{j,i}$, to the fact that 
\[M_{1,h}^{n+1} = M_{1,h}^{n}\,.\]
Now, for the zeroth order moment. We remark that $p_{j'}/p_{j+j'} -1 \leq 0$ and by non negativeness of $(f^n_{j,i})_{(j,i)\in\sigma}$ and $a_{j,i,j',i'}$ together with its symmetry, we get 
\begin{multline*}
 \sum_{l \in\sigma } \frac{1}{p_j} C^n_{j,i} =   \sum_{l \in\sigma}  \sum_{l' \in\sigma^*}   \frac{p_{j}}{p_{j+j'}} a_{j,i;j',i'} f^{n}_{j,i} f^{n}_{j',i'}\,(\Dp\Dr)^2  \hfill \\
\hfill -   \sum_{l \in\sigma} \sum_{l' \in \sigma^*}   a_{j,i;j',i'}  f^{n}_{j',i'}   f^{n}_{j,i}\,(\Dp \Dr)^2 \leq 0\,,
\end{multline*}
where the equality is obtained from expression \eqref{def_Cji} after inverting and re-indexing the summation.
%
%
%
It proves the discrete zeroth order moment decreases which is the desired property.

\smallskip

\noindent \textit{Step 2. Properties of $u^{n+1}$}. By definition of the fluxes $F^n$ in \eqref{flux_r}, the non negativeness of $(f^n_{j,i})_{(j,i)}$ and hypothesis (H3') on the rate $\V$, we get that 
\[F^n_{j,i-1/2} \leq  \V^{n+}_{j,i-1/2} f_{j,i-1}^n \leq  K u^n f_{j,i-1}^n \,.\]
which implies by the expression of $u^{n+1}$ in \eqref{eq:scheme_u} and since we assumed that $M^n_{0,h} \geq 0$ is bounded by $M^{in}$, we get
\begin{equation*}
u^{n+1}  \geq \left( 1  - K \Dt  \sum_{l \in \sigma}  f_{j,i}^n \Dp \Dr\right) u^n \geq \left( 1  - K M^{in} \Dt \right) u^n\, .
\end{equation*}
%
%
%
This latter entails the non  negativeness of $u^{n+1}$ by the stability assumption \eqref{eq:CFL}. It remains to bound $u^{n+1}$. Indeed, we have
\[ u^{n+1} \leq u^n + \Dt \sum_{l \in \sigma} V^{n-}_{j,i-1/2} f_{j,i} \Dp \Dr \leq  (1 + \Dt K  M^{in}) u^n\,.\]

\smallskip

\noindent\textit{Step 3. Non negativeness of $f^{n+1}$}. We prove this result by studying separately the transport part and the coagulation part. On the one hand, using the definition of the fluxes $F^n$ in \eqref{flux_r} and since we have $\V^n_{j,i-1/2} = \V^{n+}_{j,i-1/2} - \V^{n-}_{j,i-1/2}$, we get the following decomposition
\begin{multline} \label{flux_reformulation}
   F^n_{j,i+1/2} - F^n_{j,i-1/2}   =  (\V^{n}_{j,i+1/2} - \V^{n}_{j,i-1/2})  f^n_{j,i} \hfill \\
\hfill   + \V^{n -}_{j,i+1/2} (f^n_{j,i}-f^n_{j,i+1})+ \V^{n +}_{j,i-1/2} ( f^n_{j,i} - f^n_{j,i-1})\,. 
\end{multline}
Now, we denote by
\begin{equation} \label{eq:def_aji}
  A_{j,i}^n = \frac{1}{\Dr} \int_{\rimh}^{\riph} \left(-\frac{\partial}{\partial r}\V(u^n,\pjmh,r) \right) dr  = \frac{\V^{n}_{j,i-1/2} - \V^{n}_{j,i+1/2}}{\Dr}\,,
\end{equation}
which is nonnegative for any $(j,i)\in\sigma$ by the monotonicity hypothesis (H4'). It results from the formulation \eqref{flux_reformulation} that 
\begin{multline} \label{eq:convexity}
 \frac 1 2 f^n_{j,i} - \frac{\Dt}{\Dr} \left( F^n_{j,i+1/2} - F^n_{j,i-1/2} \right) = \Dt A_{j,i}^n f^n_{j,i} \hfill \\
 + \frac 1 4 \Big[ \big(1- 4\frac{\Dt}{\Dr}\V^{n -}_{j,i+1/2}\big)f^n_{j,i} + 4\frac{\Dt}{\Dr}\V^{n -}_{j,i+1/2} f^n_{j,i+1} \Big] \\
 \hfill + \frac 1 4 \Big[ \big(1- 4\frac{\Dt}{\Dr}\V^{n +}_{j,i-1/2}\big)f^n_{j,i} + 4\frac{\Dt}{\Dr}\V^{n +}_{j,i-1/2} f^n_{j,i-1} \Big], 
\end{multline}
Thus, the non negativeness of the $A_{j,i}^n$ in \eqref{eq:def_aji} and the convex combination of the nonnegative $f_{j,i}^n$,  we get 
\begin{equation} \label{eq:pos_inter}
\frac 1 2 f^n_{j,i} - \frac{\Dt}{\Dr} \left( F^n_{j,i+1/2} - F^n_{j,i-1/2} \right)\geq 0\,.
\end{equation}
Thanks to the definition of $f_{j,i}^{n+1}$ in \eqref{eq:scheme_f}, and using \eqref{eq:pos_inter} we obtain

\begin{equation*}
  f^{n+1}_{j,i} \geq  \frac 1 2 f^n_{j,i} + \frac{\Dt}{\Dr\Dp} \frac{1}{p_j} C^n_{j,i}\,.
\end{equation*}
Then, in the definition of $C_{j,i}^n$ we keep only the negative term to get the following estimate
\begin{equation*}
 f^{n+1}_{j,i} \geq \left(  \frac 1 2 -  \Dt \sum_{j'=0}^{J-j} \sum_{i'=0}^I  a_{j,i;j',i'}  f^n_{j',i'} \Dp\Dr \right) f^n_{j,i} \geq \frac 1 2 \left(  1 -  2 K \Dt M^{in} \right) f^n_{j,i} \geq 0\,.
\end{equation*}
where in the two last inequalities we used hypothesis (H2') on the coagulation kernel, the stability assumption \eqref{eq:CFL}, and   \eqref{eq:bound_moment0}.
%

\smallskip

\noindent\textit{Step 4. Time estimation}. From the definition of $f^{n+1}_{j,i}$ in \eqref{eq:scheme_f}, we easily obtain
\begin{equation*}
 |f^{n+1}_{j,i} -f^n_{j,i}|\Dr\Dp  \leq  \Dt  \frac{1}{p_j}|F^n_{j,i+1/2}-F^n_{j,i-1/2}|\Dp + \Dt \frac{1}{p_j} |C_{j,i}^n|\,.
\end{equation*}
On the one hand, we have from the definition of the discrete coagulation in \eqref{def_Cji} and hypothesis (H1')  that
\begin{equation*}
 \sum_{(j,i)\in\sigma} \frac{1}{p_j} |C_{j,i}^n| \leq 2 K M^{in}{}^2\,.
\end{equation*}
On the other hand, since for all $j\in\{0,\ldots,J\}$ we have $p_j \geq \Dp/2$, then  
\begin{equation*}
 \sum_{(j,i)\in\sigma} \frac{\Dp}{p_j}|F^n_{j,i+1/2}-F^n_{j,i-1/2}| \leq 8 M^{in} \sup_{u\in(0,U_T)} ||\V(u,\cdot)||_{L^\infty(\Sp_P)}\,.
\end{equation*}
Thus, summing the first inequality together with the two last ones we prove the required estimation. Finally, we remark that from the expression of $u^{n+1}$ in \eqref{eq:scheme_u}, we have 
\[u^{n+1}-u^n = \Dt  \sum_{(j,i)\in\sigma} F^n_{j,\imh}.\]
We have to bound the fluxes \eqref{flux_r}. Indeed, we have for any $(j,i)\in\sigma$ and $i\neq0$
\[ \abs{F^n_{j,\imh}} \leq \sup_{u\in(0,U_T)} ||\V(u,\cdot)||_{L^\infty(\Sp_P)} ( \fhim + \fh )\,, \]
and then summing over $\sigma$ we get
\[|u^{n+1}-u^n| \leq   2 \sup_{u\in(0,U_T)} ||\V(u,\cdot)||_{L^\infty(\Sp_P)}  M^{in}\Dt\,.\]
It ends the proof.
\end{proof}

This proposition establishes the main properties of our scheme, we now derive a corollary that transposes these properties to the sequences of approximations. 

\begin{corollary} \label{cor:estim_continuous}
 Under hypothesis of Proposition \ref{prop:discrete}. Let the sequences of approximations $(f_h)_h$, $(\tilde f_h)_h$ and $(u_h)_h$ construct by Definition \ref{def:seq_approx}. Then, for all discretization parameter $h$, 
 \[ f_h \in L^\infty(0,T;L^1(\Sp_P)) \quad \text{and} \quad \tilde f_h \in C([0,T];L^1(\Sp_P))  \]
 together with $ u_h \in L^\infty(0,T)$. Moreover, we have for the sequence $(f_h)_h$ the uniform estimation
 \begin{equation} \label{eq:bound_moment0_continuous}
 \iint_{\Sp_P} f_h(t,r,p) \ dr dp \leq  \iint_{\Sp_P} f_h(s,r,p) \ dr dp \leq M^{in}, \quad \forall  t \geq s\,,
 \end{equation}
 and 
\begin{equation}  \label{eq:discrete_moment1_conservation}
\iint_{\Sp_P} P_h(p) f_h(t,r,p) \ dr dp = \iint_{\Sp_P} P_h(p) f_h^{in}(r,p) \ dr dp, \quad \forall t\in[0,T)\,.
\end{equation}
 For the sequence $(\tilde f_h)_h$, we have 
 \begin{equation} \label{eq:discrete_moment0_continuous_tildefh}
  0 \leq \iint_{\Sp_P} \tilde f_h(t,r,p) \ dr dp  \leq M^{in}, \quad \forall t\in[0,T] \,,
 \end{equation}
 and there exist a constant $C>0$ independent on $h$ such that
 \begin{equation} \label{eq:continuity_tildefh}
  \norm{\tilde f_h(t,\cdot) - \tilde f_h(s,\cdot)}_{L^1(\Sp_P)} \leq C |t-s|, \quad \forall s,t\in[0,T]\,.
 \end{equation}
 Finally, the sequence $(u_h)_h$ satisfy the uniform bound
 \begin{equation} \label{eq:born_uniform_u}
   \norm{u_h}_{L^\infty(0,T)} \leq U_T,  \quad  \text{and} \quad \norm{u_h}_{BV(0,T)} < C T\,.
 \end{equation}

\end{corollary}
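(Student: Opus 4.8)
The plan is to transfer each discrete estimate of Proposition~\ref{prop:discrete} to the reconstructed approximations of Definition~\ref{def:seq_approx} by identifying their integrals with the discrete moments $M^n_{0,h}$ and $M^n_{1,h}$. For the piecewise-constant $f_h$, on a time slab $[t_n,t_{n+1})$ one has $\iint_{\Sp_P} f_h(t,\cdot)\,drdp = \sum_{\jiS} f^n_{j,i}\Dr\Dp = M^n_{0,h}$, and, since $P_h \equiv p_j$ on each cell $\Lambda_{j,i}$, also $\iint_{\Sp_P} P_h f_h(t,\cdot)\,drdp = M^n_{1,h}$. First I would invoke the monotonicity $M^{n+1}_{0,h}\le M^n_{0,h}\le M^{in}$ to obtain \eqref{eq:bound_moment0_continuous} (for $t\ge s$ the slab index of $t$ is at least that of $s$) and the uniform bound giving $f_h\in L^\infty(0,T;L^1(\Sp_P))$; the conservation $M^n_{1,h}=M^0_{1,h}$ then yields \eqref{eq:discrete_moment1_conservation} directly.

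For the time-affine interpolant $\tilde f_h$, on $[t_n,t_{n+1})$ each nodal value is the convex combination $(1-\lambda)f^n_{j,i}+\lambda f^{n+1}_{j,i}$ with $\lambda=(t-t_n)/\Dt\in[0,1)$. Nonnegativity of the $f^n_{j,i}$ from Proposition~\ref{prop:discrete} makes $\tilde f_h\ge 0$, and integrating gives $\iint_{\Sp_P}\tilde f_h(t,\cdot)\,drdp = (1-\lambda)M^n_{0,h}+\lambda M^{n+1}_{0,h}$, a convex combination of two quantities in $[0,M^{in}]$; this is exactly \eqref{eq:discrete_moment0_continuous_tildefh}.

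The Lipschitz-in-time estimate \eqref{eq:continuity_tildefh} is the load-bearing step, and it rests on the discrete time-increment bound of the Proposition. On $(t_n,t_{n+1})$ the function $\tilde f_h$ is affine with $\partial_t\tilde f_h = \sum_{\jiS}\frac{f^{n+1}_{j,i}-f^n_{j,i}}{\Dt}\indic{\Lambda_{j,i}}$, whence $\norm{\partial_t\tilde f_h(\tau,\cdot)}_{L^1(\Sp_P)} = \frac{1}{\Dt}\sum_{\jiS}\abs{f^{n+1}_{j,i}-f^n_{j,i}}\Dr\Dp \le C$ uniformly in $h$ and $n$, by $\sum_{\jiS}\abs{f^{n+1}_{j,i}-f^n_{j,i}}\Dr\Dp\le C\Dt$. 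Integrating this $L^1$ derivative bound along $[s,t]$ — splitting at the intermediate grid points $t_k$ when $s$ and $t$ fall in different slabs — gives $\norm{\tilde f_h(t,\cdot)-\tilde f_h(s,\cdot)}_{L^1(\Sp_P)}\le C\abs{t-s}$, which simultaneously proves $\tilde f_h\in C([0,T];L^1(\Sp_P))$.

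Finally, $u_h$ is piecewise constant equal to $u^n$ on $[t_n,t_{n+1})$, so $0\le u^n\le U_T$ gives $\norm{u_h}_{L^\infty(0,T)}\le U_T$; and since the total variation of a step function is the sum of its jumps, $\sum_n\abs{u^{n+1}-u^n}\le N\,C\Dt = CT$ (using $\abs{u^{n+1}-u^n}\le C\Dt$ and $N\Dt=T$), which together with the trivial $\norm{u_h}_{L^1(0,T)}\le T\,U_T$ delivers the $BV$ bound in \eqref{eq:born_uniform_u}. I expect no real obstacle here: the whole corollary is essentially bookkeeping, the single genuinely decisive input being the discrete time-increment estimate, without which the equicontinuity of $\tilde f_h$ and the ensuing compactness would break down.
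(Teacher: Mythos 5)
Your proposal is correct and follows essentially the same route as the paper: identifying the integrals of $f_h$, $\tilde f_h$, $u_h$ with the discrete moments, using the convex-combination identity on each time slab for $\tilde f_h$, and deriving the Lipschitz estimate \eqref{eq:continuity_tildefh} and the $BV$ bound from the discrete time-increment estimates of Proposition \ref{prop:discrete}. Your explicit integration of $\partial_t \tilde f_h$ across slabs is merely a more detailed writing of what the paper dispatches as a ``direct consequence'' of that same discrete estimate.
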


\begin{proof}
 The regularity and non negativeness of the sequences $(f_h)_h$ and $(u_h)_h$ follow from Proposition \ref{prop:discrete} and Definition \ref{def:seq_approx}. And for now, by the same arguments,  it is clear that $\tilde f_h \in L^\infty(0,T;L^1(\Sp_P))$. Next, \eqref{eq:bound_moment0_continuous} and \eqref{eq:discrete_moment1_conservation} follow from the properties of the discrete moment in Proposition \eqref{prop:discrete}, by a simple reformulation using the definition of $f_h$ in \eqref{eq:piecewise_fh}. Then, for all $n\in\{0,\ldots,N-1\}$ and $t\in[t_n,t_{n+1})$, 
 \begin{equation} \label{eq:convex_tildef}
  \frac{f^{n+1}_{j,i}-f^n_{j,i}}{\Dt}(t-t_n) + f^n_{j,i} = \frac{(t-t_n)}{\Dt}f^{n+1}_{j,i}  + \left(1 - \frac{(t-t_n)}{\Dt}\right) f^{n}_{j,i}  \,.
 \end{equation}
 Thus, by definition of $\tilde f_h$ in \ref{eq:piecewise_linear_fh}, the discrete moment \eqref{eq:moment_discrete} and the convex combination mentioned above, we have for all $t\in[0,T]$:
 \begin{equation*}
 0 \leq \iint_{\Sp_P} \tilde f_h(t,r,p) \ dr dp  \leq \sup_{n\in\{0,\ldots,N-1\}} \max(M^{n+1}_{0,h},M^n_{0,h}) \leq M^{in}\,.
 \end{equation*}
 and that $\tilde f_h$ are nonnegatives.  It provides the uniform bound \eqref{eq:discrete_moment0_continuous_tildefh}. Next, estimation \eqref{eq:continuity_tildefh} is against a direct consequence of Proposition \ref{prop:discrete} (last estimation). It provides the regularity in time of the sequence $\tilde f_h$. Finally, \eqref{eq:born_uniform_u} is a consequence of Proposition \ref{prop:discrete} and the definition of $u_h$ in \eqref{eq:piecewise_u}.
 
\end{proof}

\begin{remark}
Before coming back to the proof of Theorem \ref{thm:convergence}, we emphasize on the fact that, as mentioned before, our scheme preserves the mass \eqref{eq:discrete_moment1_conservation}. Nevertheless, the algebraic condition \eqref{eq:constraint}, transformed into \eqref{eq:ions_trunc} is no more preserved at the discrete level. The following corollary is not used in the demonstration of the convergence, but only states that the lack of mass that occurs in our scheme can be control. Indeed, the deviation from the initial value is of magnitude $\Dr$. 
\end{remark}
\begin{corollary}
 Under hypotheses of Proposition \ref{prop:discrete}, for any $n\in\{0,\ldots,N\}$ we define 
 \[\rho^n \coloneqq u^n + \sum_{(j,i)\in\sigma} r_i p_j f_{j,i}^n,\]
 then we have for some constant $C=C(M^{in},K)\geq0$ that
 \[ |\rho^n - \rho^0| \leq C T \Dr.\]
\end{corollary}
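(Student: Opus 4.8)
The plan is to control the one–step increment $\rho^{n+1}-\rho^n$, show it is $O(\Dr)$ uniformly in $n$, and then telescope over the at most $N=T/\Dt$ steps. Writing $\rho^n=u^n+\sum_{\jiS}\ri\pj f^n_{j,i}\,\Dr\Dp$ (the cell measure $\Dr\Dp$ being understood, consistently with \eqref{eq:constraint_resc}), I insert the update rules \eqref{eq:scheme_f} and \eqref{eq:scheme_u} and split the increment into a transport part (the $\V f$ fluxes together with $u^{n+1}-u^n$) and a coagulation part. The first observation is that the transport part cancels exactly, leaving only
\[
\rho^{n+1}-\rho^n=\Dt\sum_{\jiS}\ri\,C^n_{j,i}\,.
\]

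To see the exact cancellation, note that the transport contribution of \eqref{eq:scheme_f} to the weighted moment is $-\Dt\Dp\sum_{\jiS}\ri\bigl(F^n_{j,\iph}-F^n_{j,\imh}\bigr)$. A discrete summation by parts in the index $i$, using the vanishing boundary fluxes \eqref{eq:flux_r_boundary} and $\ri-r_{i-1}=\Dr$, rewrites this as $\Dt\,\Dr\Dp\sum_{\jiS}F^n_{j,\imh}$, which is precisely the negative of $u^{n+1}-u^n$ prescribed by \eqref{eq:scheme_u}. This is the discrete analogue of the continuous identity (compare \eqref{eq:deriv_moment} with \eqref{eq:ions_trunc}) that keeps $\rho$ constant, and it holds by design of the scheme, independently of $h$.

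It remains to estimate the coagulation defect $\sum_{\jiS}\ri C^n_{j,i}$, which would be zero if the scheme conserved the ion content exactly. Starting from the reordered form \eqref{def_Cji}, I symmetrize the gain and loss terms over the pair of source cells $(j',i')$ and $(j'',i'')$ (legitimate by the symmetry of $a$ and of $V^\#$), and I carry out the sum over the deposition row $i$ against the localizing indicator, so that $\sum_i \ri\,\delta = r_{i^*}$, the center of the unique cell containing $V^\#_{j',i';j'',i''}$. The defect then collapses to
\[
\sum_{\jiS}\ri C^n_{j,i}=\tfrac12\sum_{j'+j''\le J}\sum_{i',i''}\bigl(r_{i^*}-r^\#\bigr)(p_{j'}+p_{j''})\,a_{j',i';j'',i''}\,f^n_{j',i'}f^n_{j'',i''}\,(\Dp\Dr)^2\,,
\]
where $r^\#=(r_{i'}p_{j'}+r_{i''}p_{j''})/(p_{j'}+p_{j''})\in(0,1)$ is the exact merged ratio.

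The crux, and the step I expect to be the main obstacle, is the consistency estimate $\abs{r_{i^*}-r^\#}\le C\Dr$. Since $V^\#$ lies in cell $i^*$ one has $\abs{r_{i^*}-V^\#}\le\Dr/2$, so the work is to compare the midpoint quotient $V^\#$ of \eqref{approach_indic} with $r^\#$: expanding the $\pm1/2$ shifts shows the numerators agree up to $O(\Dr+\Dp)$ and the denominators up to $O(\Dp)$, the denominator mismatch being the sole mechanism responsible for the loss of exact conservation, and the degenerate regime of the smallest cells must be handled with care since there the ratio error is $O(1)$ while the carried mass $p_{j'}+p_{j''}$ is only $O(\Dp)$. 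Granting this estimate, I bound $a\le K$ by (H2') and factor the remaining sum into the discrete moments $M^n_{0,h}\le M^{in}$ and $M^n_{1,h}=M^0_{1,h}$ of Proposition \ref{prop:discrete}, obtaining $\bigl|\sum_{\jiS}\ri C^n_{j,i}\bigr|\le C\Dr$ with $C$ controlled by $M^{in}$ and $K$ through the conserved moments. Telescoping finally gives $\abs{\rho^n-\rho^0}\le\sum_{m=0}^{n-1}\abs{\rho^{m+1}-\rho^m}\le N\Dt\,C\Dr=CT\Dr$, which is the assertion.
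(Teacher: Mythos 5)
Your opening step---testing \eqref{eq:scheme_f} against $r_i$, summing by parts in $i$ with the vanishing boundary fluxes \eqref{eq:flux_r_boundary}, and observing that the transport contribution cancels exactly against $u^{n+1}-u^n$ from \eqref{eq:scheme_u}---is correct and is exactly how the paper begins; both arguments arrive at
\[
\rho^{n+1}-\rho^{n}=\Dt\sum_{(j,i)\in\sigma}r_i\,C^n_{j,i}\,.
\]
The problem is what comes next. Your reduction $\sum_{i}r_i\,\delta_{j',i';j'',i''}^{i,i-1}=r_{i^*}$ silently assumes $V^\#_{j',i';j'',i''}<1$, i.e.\ that a deposition row exists. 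It need not: in \eqref{approach_indic} the denominator is $p_{j'-1/2}+p_{j''-1/2}$, which vanishes when $j'=j''=0$, so for pairs of smallest cells $V^\#=+\infty$, the gain is deposited nowhere, and the corresponding term in your symmetrized sum is $-(p_{j'}+p_{j''})\,r^\#$, an ion loss of order $\Dp$ per pair---not $O(\Dr)$. Away from this degenerate regime, the quantity you must control is $\abs{V^\#-r^\#}$, and, as you yourself observe, the half-cell shifts produce errors of order $\Dp$ (in both numerator and denominator) as well as $\Dr$. So the consistency estimate $\abs{r_{i^*}-r^\#}\le C\Dr$ that you ``grant'' is false in the form you need; your route can give at best $\abs{\rho^n-\rho^0}\le CT(\Dr+\Dp)$, and only after the pairs with $V^\#\ge 1$ are treated separately, which you do not do. Since this step is the crux (and you flag it as such), the proposal has a genuine gap: it does not yield the stated $CT\Dr$ bound.

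The paper's proof is structurally different and never compares $V^\#$ with $r^\#$. It keeps the coagulation in the flux form \eqref{eq:flux_Cji}: $C^n_{j,i}$ is a double difference of the corner values $C^n_{j\pm1/2,i\pm1/2}$, so in $\sum_{(j,i)\in\sigma}r_i C^n_{j,i}$ the sum over $j$ telescopes (the conventions \eqref{eq:boundary_C} kill the $j=-1/2$ column), and the remaining Abel summation in $i$, whose consecutive weights differ by exactly $\Dr$, leaves $\Dr$ times a sum of corner fluxes $C^n_{J+1/2,\,i+1/2}$ along the truncation column; the paper then bounds these through \eqref{flux_C}, (H2') and the moment estimates of Proposition \ref{prop:discrete}, concluding $\abs{\rho^{n+1}-\rho^{n}}\le 2KP(M^{in})^2\,\Dr\,\Dt$ and summing over the $N=T/\Dt$ steps. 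In short, in the paper the factor $\Dr$ is produced by the summation-by-parts weights acting on a conservative (divergence-form) discretization, not by a pointwise consistency estimate on the approximate indicator; that is the idea your proposal is missing, and it is what allows the paper to bypass entirely the small-cell and $V^\#\ge 1$ pathologies on which your argument founders.
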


\begin{proof}
We remark that summing \eqref{eq:scheme_f} tested against $r_i$ and with \eqref{eq:scheme_u}, we get
\[ \rho^{n+1} - \rho^{n} = \Dt \sum_{(j,i)\in\sigma} r_i C_{j,i}^n.\]
Then by \eqref{eq:flux_Cji}, we obtain
\[ \sum_{(j,i)\in\sigma} r_i C_{j,i}^n = \sum_{i=0}^I C_{J+1/2,i+1/2} \Dr.\] 
Combining these two equalities and by definition of the flux \eqref{flux_C} and the bound $M^{in}$ in \eqref{eq:def_bound}, we get 
\[ \abs{\rho^{n+1} - \rho^{n}} \leq 2 K P M^{in}{}^2 \Dr \Dt,\]
which ends the proof.
\end{proof}
%
%

\subsubsection{Weak compactness}

We introduced, from the scheme established in Definition \ref{def:scheme}, sequences of approximations in Definition \ref{def:seq_approx} satisfying the properties stated in Corollary \ref{cor:estim_continuous} that suppose to approach the solution to our problem. An important issue in proving this result is to get the convergence, towards some functions, of these sequences in a sense that allow us to obtain an enough regular solution to \eqref{eq:polymers_trunc}-\eqref{eq:ions_trunc}. The answer to this issue is obtain by argument of compactness. In this section, we provide the necessary compactness estimates to pass to the limit.  


\medskip

The first estimate will follow from a refined version of the De La Vall\'ee-Poussin Theorem \cite{Chau-Hoan1976}, also we refer to \cite{Dellacherie1988}*{Chap. II, Theorem 22} for a probabilistic approach. Indeed, since  $f^{in} \in L^1(\Sp_P)$ is nonnegative, there exists  
\begin{equation} \label{eq:def_phi}
 \begin{gathered}
 \phi \in C^1([0,+\infty)) \text{ nonnegative, convex, with concave derivative,}\\
 \text{with }\phi(0) =0, \quad \phi'(0)=0,\quad \text{and} \quad \frac{\phi(r)}{r} \underset{r\rightarrow +\infty}{\longrightarrow} + \infty\,,
 \end{gathered}
\end{equation}
such that 
\begin{equation}\label{eq:estim_phi_fin}
 \iint_{\Sp_P} \phi(f^{in}(p,r))\, dr dp < +\infty\,.
\end{equation}
Therefore, proving that \eqref{eq:estim_phi_fin} can be propagated in time, uniformly according to $h$, will give us the uniform integrability of the sequences $f_h$ and $\tilde f_h$.

%
\begin{lemma} \label{lem:uniform_int}
 Let $\phi$ satisfying \eqref{eq:def_phi} such that \eqref{eq:estim_phi_fin} holds true. Then, there exists $C\geq 0$ independent on $h$, such that 
 %
 %
 for any $t\in [0,T)$, we have 
 \begin{equation}\label{eq:phi_continuous}
  \iint_{\Sp_P} \phi(f_h^{\Dt}(t,p,r)) \, dr dp \leq e^{C T} \int_{\Sp_R} \phi(f^{in}(p,r)) \, drdp\,.
 \end{equation}
 and 
  \begin{equation}\label{eq:phi_continuous_tilde}
  \iint_{\Sp_P} \phi(\tilde f_h^{\Dt}(t,p,r)) \, dr dp \leq e^{C T} \int_{\Sp_R} \phi(f^{in}(p,r)) \, drdp\,.
 \end{equation}
 
\end{lemma}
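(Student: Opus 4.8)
The plan is to establish a one-step discrete inequality of the form $\sum_{l\in\sigma}\phi(f^{n+1}_{j,i})\Dr\Dp \le (1+C\Dt)\sum_{l\in\sigma}\phi(f^n_{j,i})\Dr\Dp$ with $C$ independent of $h$, and then to iterate it by a discrete Gronwall argument from $n=0$ to get $\sum_{l\in\sigma}\phi(f^n_{j,i})\Dr\Dp \le (1+C\Dt)^n\iint_{\Sp_P}\phi(f^{in}_h)\,drdp \le e^{CT}\iint_{\Sp_P}\phi(f^{in})\,drdp$, where the last inequality uses Jensen's inequality on each cell for the averaged initial datum \eqref{eq:initial_f0_trunc} together with \eqref{eq:estim_phi_fin}. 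Since by \eqref{eq:piecewise_fh} the function $f_h$ equals the constant $f^n_{j,i}$ on $[t_n,t_{n+1})\times\Lambda_{j,i}$, estimate \eqref{eq:phi_continuous} then follows at once. For \eqref{eq:phi_continuous_tilde}, I would use \eqref{eq:convex_tildef}: on each cell $\Lambda_{j,i}$ the value $\tilde f_h(t,\cdot)$ is a convex combination of $f^n_{j,i}$ and $f^{n+1}_{j,i}$, so convexity of $\phi$ gives $\phi(\tilde f_h)\le$ the same convex combination of $\phi(f^n_{j,i})$ and $\phi(f^{n+1}_{j,i})$, both already controlled by $e^{CT}\iint_{\Sp_P}\phi(f^{in})$. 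Thus everything reduces to the one-step estimate, which I would obtain by treating transport and coagulation separately in the update \eqref{eq:scheme_f}.

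For the transport contribution I would reuse the convex reformulation \eqref{eq:convexity}: under the CFL condition \eqref{eq:CFL} the transported value $\mathcal T_{j,i}$ is a genuine convex combination of the neighbouring values $f^n_{j,i},f^n_{j,i\pm1}$ plus the nonnegative drift $\Dt A^n_{j,i}f^n_{j,i}$, where $A^n_{j,i}\ge0$ defined in \eqref{eq:def_aji} satisfies $A^n_{j,i}\le\|\partial_r\V\|_{L^\infty}\le K(U_T+1)$ by (H3')--(H4') and Proposition \ref{prop:discrete}. Applying Jensen's inequality cell by cell to the convex part, and treating the bounded drift as an $O(\Dt)$ perturbation, I obtain after summation over $\sigma$ and telescoping of the fluxes that vanish at the boundary by \eqref{eq:flux_r_boundary} a bound of $\sum_{l\in\sigma}\phi(\mathcal T_{j,i})\Dr\Dp$ by $(1+C\Dt)\sum_{l\in\sigma}\phi(f^n_{j,i})\Dr\Dp$.

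The main obstacle is the coagulation term. Writing $f^{n+1}_{j,i}=\mathcal T_{j,i}+\Dt\,g_{j,i}$ with $g_{j,i}=g^+_{j,i}-g^-_{j,i}$ the nonnegative gain and loss parts read off from \eqref{def_Cji}, the tangent inequality $\phi(X)\le\phi(Y)+\phi'(X)(X-Y)$ for convex $\phi$, taken at $X=f^{n+1}_{j,i}$, $Y=\mathcal T_{j,i}$, gives $\phi(f^{n+1}_{j,i})\le\phi(\mathcal T_{j,i})+\Dt\,\phi'(f^{n+1}_{j,i})g^+_{j,i}$ after discarding the favourably signed loss term. The delicate point is that $\phi'$ is here evaluated at the new value $f^{n+1}_{j,i}$, which is not uniformly bounded in $h$; the structural hypotheses \eqref{eq:def_phi} are exactly what is needed. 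Since $\phi$ is convex with $\phi(0)=\phi'(0)=0$ and $\phi'$ is concave, one has $\phi^*(\phi'(z))=z\phi'(z)-\phi(z)\le\phi(z)$, hence the Young-type inequality $a\,\phi'(b)\le\phi(a)+\phi(b)$ for $a,b\ge0$. Applying it to each bilinear product $f^n_{j',i'}\phi'(f^{n+1}_{j,i})$ in $g^+_{j,i}$, then summing over $\sigma$ and using $\|a\|_{L^\infty}\le K$ from (H2'), $p_{j'}/p_{j}\le1$, and the uniform mass bound $\sum_{l\in\sigma}f^n_{j,i}\Dr\Dp\le M^{in}$ of Proposition \ref{prop:discrete}, the gain is controlled by $C\Dt M^{in}\sum_{l\in\sigma}\phi(f^n_{j,i})\Dr\Dp$ plus a self-term proportional to $\Dt\sum_{l\in\sigma}\phi(f^{n+1}_{j,i})\Dr\Dp$. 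This self-term is absorbed into the left-hand side thanks to the smallness guaranteed by the stability condition \eqref{eq:CFL}, which I expect to be the crux of the estimate. Combining the transport and coagulation bounds then yields $(1-C\Dt)\sum_{l\in\sigma}\phi(f^{n+1}_{j,i})\Dr\Dp\le(1+C\Dt)\sum_{l\in\sigma}\phi(f^n_{j,i})\Dr\Dp$, hence the desired one-step inequality for $\Dt$ small, closing the argument.
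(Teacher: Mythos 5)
Your proposal is correct and follows essentially the same route as the paper's proof: the same splitting into a transport step based on the convex reformulation \eqref{eq:convexity} and a coagulation step treated by the tangent inequality with the loss term discarded, the same Young-type inequality $x\phi'(y)\le\phi(x)+\phi(y)$ (which the paper cites from Bourgade--Filbet and Lauren\c{c}ot--Mischler rather than rederiving, as you do, from concavity of $\phi'$), absorption of the $\sum\phi(f^{n+1}_{j,i})$ self-term via the stability condition \eqref{eq:CFL}, iteration in $n$, and Jensen's inequality for the averaged initial datum, plus the identical convex-combination argument \eqref{eq:convex_tildef} to pass from $f_h$ to $\tilde f_h$. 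The only point where the paper is more explicit than you is the drift term $\Dt A^n_{j,i}f^n_{j,i}$ in the transport step, which it controls by renormalizing the convex coefficients by $1+2K_T\Dt$ and invoking $\phi(\delta x)\le\delta^2\phi(x)$ (again a consequence of the concavity of $\phi'$), rather than your looser ``bounded $O(\Dt)$ perturbation'' phrasing, but your ingredients suffice to fill this in.
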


\begin{proof}
Let us derive first \eqref{eq:phi_continuous_tilde} from \eqref{eq:phi_continuous}. By the definition of $\tilde f_h$ in \eqref{eq:piecewise_linear_fh}, the convex combination \eqref{eq:convex_tildef}, and since $\phi$ is convex by \eqref{eq:def_phi}, we have for any $t\in[0,T]$  
%
%
\begin{multline*}
  \iint_{\Sp_P}\phi(\tilde f_h(t,p,r) ) \ dr dp \leq   \sup_{n\in\{0,\ldots,N\}} \sum_{(j,i)\in\sigma}  \phi(f^n_{j,i}) \Dr\Dp \hfill \\
 \hfill \leq \sup_{t\in[0,T]} \int_{\mathcal S_P} \phi( f_h(t,p,r))\ dr dp
\end{multline*}
Thus, it remains to prove \eqref{eq:phi_continuous} to conclude. We split the computation into two parts, in order to treat first the transport part and then the coagulation. 

\smallskip

\noindent\textit{Step 1. The transport}. This first part involves the convexity of $\phi$ and is closely related to the estimation done in \cite{Filbet2003}*{Lemma 3.5}. Indeed, let us denote the intermediate value: 
\begin{equation} \label{eq:def_tildef}
 \tilde f^n_{j,i} = f^n_{j,i} - \frac{\Dt}{\Dr} ( F^n_{j,i+1/2} - F^n_{j,i-1/2} )\,.
\end{equation}
From the convex formulation \eqref{eq:convexity}, we easily obtain for any $(j,i)\in\sigma$ the following expression
\begin{multline} \label{eq:f_intermediate}
 \ds \tilde  f^n_{j,i} = ( \frac 1 2 + \Dt A_{j,i}^n)  f^n_{j,i} + \frac 1 4 \Big[ \big(1- 4\frac{\Dt}{\Dr}\V^{n -}_{j,i+1/2}\big)f^n_{j,i} + 4\frac{\Dt}{\Dr}\V^{n -}_{j,i+1/2} f^n_{j,i+1} \big] \hfill \\
 \hfill   + \frac 1 4 \Big[ \big(1- 4\frac{\Dt}{\Dr}\V^{n +}_{j,i-1/2}\big)f^n_{j,i} + 4\frac{\Dt}{\Dr}\V^{n +}_{j,i-1/2} f^n_{j,i-1} \Big]
\end{multline}
with the convention $f_{j,I+1} = f_{j,-1} =0$ and $A^n_{j,i}$ the discrete gradient defined in \eqref{eq:def_aji}. Our aim is to write $\tilde  f^n_{j,i}$ as a complete convex combination of the $f^n_{j,i}$'s together with $0$. Thus, let us introduce the coefficients

\[\lambda_{j,i}^0 = \frac 1 2 + \Dt A_{j,i}^n,\]
\begin{equation*}
\begin{aligned}
& \lambda_{j,i}^1 = \frac 1 4 - \frac{\Dt}{\Dr}\V^{n -}_{j,i+1/2}\,, &  \lambda_{j,i}^2 = \frac{\Dt}{\Dr}\V^{n -}_{j,i+1/2}\,, \\
& \lambda_{j,i}^3 = \frac 1 4 - \frac{\Dt}{\Dr}\V^{n +}_{j,i-1/2}\,, &  \lambda_{j,i}^4 = \frac{\Dt}{\Dr}\V^{n +}_{j,i-1/2}\,,
\end{aligned}
\end{equation*}
which are non negatives since $A_{j,i}^n \geq 0$, by monotonicity hypothesis (H4), and by stability assumption \eqref{eq:CFL}. Then, by virtue of hypothesis (H3) and since $u_h$ satisfies the bound \eqref{eq:born_uniform_u}, we get that 
\begin{equation*} 
||\partial_r \V(u,p,r)||_{L^\infty((0,U_T)\times\Sp_P)} =  ||\partial_r k||_{L^\infty(\Sp_P)} U_T + ||\partial_r k||_{L^\infty(\Sp_P)} \leq K_T\,,
\end{equation*}
where $K_T =K(U_T+1)$. Thus, we renormalized the coefficients as follows 
\[ \tilde \lambda_{j,i}^k = \frac{\lambda_{j,i}^k}{1 + 2 K_T \Dt }, \qquad k=0,\ldots,4\,.\]
From \eqref{eq:f_intermediate}, it follows that
\[\frac{\tilde  f^n_{j,i}}{1 + 2 K_T \Dt } = \tilde \lambda_{j,i}^0 f^n_{j,i} + \tilde \lambda_{j,i}^1  f^n_{j,i} + \tilde \lambda_{j,i}^2  f^n_{j,i+1} + \tilde \lambda_{j,i}^3  f^n_{j,i} + \tilde \lambda_{j,i}^4  f^n_{j,i-1}\,.\]
It remains to remark that
\[ 0 \leq 1- \tilde \lambda_{j,i}^5 =  \sum_{k=0}^4 \tilde \lambda_{j,i}^k  = \frac{1 + A_{j,i}^n \Dt}{1+ 2 K_T \Dt} \leq \frac 1 2\,,\]
and we obtain by convexity of $\phi$ and $\phi(0) = 0$ 
\[\phi\left(\frac{\tilde  f^n_{j,i}}{1 + 2 K\Dt }\right) \leq \tilde \lambda_{j,i}^0 \phi(f^n_{j,i}) + \tilde \lambda_{j,i}^1 \phi(f^n_{j,i}) + \tilde \lambda_{j,i}^2  \phi(f^n_{j,i+1}) + \tilde \lambda_{j,i}^3  \phi(f^n_{j,i}) + \tilde \lambda_{j,i}^4  \phi(f^n_{j,i-1})\,.\]
Then, summing over $i$, reordering the sum and remarking that 
\begin{equation*} 
 \tilde \lambda_{j,i}^0  +  \tilde \lambda_{j,i}^1  +  \tilde \lambda_{j,i-1}^2  +  \tilde \lambda_{j,i}^3  +   \tilde \lambda_{j,i+1}^4  = \frac{1}{1+2K_T\Dt}\,,
\end{equation*}
it is straight forward that 
\begin{equation} \label{eq:sum_intermediate}
\sum_{(j,i)\in\sigma} \phi\left(\frac{\tilde  f^n_{j,i}}{1 + 2 K \Dt }\right) \leq\sum_{(j,i)\in\sigma} \phi(f^n_{j,i})\,.
\end{equation}
Finally, using that the derivative of $\phi$ is concave, we have $\phi'(\delta y)\leq \delta \phi'(y)$ for $(\delta,y)\in [1,+\infty)\times\R_+$ and thus integrating over $(0,x)$ this latter, we get that
\begin{equation} \label{eq:concav_phi}
 \phi(\delta x) \leq \delta^2 \phi(x), \quad \forall (\delta,x)\in [1,+\infty)\times\R_+\,.
\end{equation}
We conclude this intermediate estimation, using \eqref{eq:sum_intermediate} and \eqref{eq:concav_phi}, 
\begin{equation}\label{eq:estim_intermediate}
\sum_{(j,i)\in\sigma} \phi(\tilde f^n_{j,i}) \leq  (1 + 2 K_T\Dt)^2 \sum_{(j,i)\in\sigma} \phi( f^n_{j,i})\,. 
\end{equation}

\smallskip

\noindent \textit{Step 2. The coagulation}. Now we get the first part of our estimation, it remains to take into account the coagulation. We estimate the following quantity
\[ \sum_{(j,i)\in\sigma} \phi(f^{n+1}_{j,i}) - \phi(\tilde f^n_{j,i}) \leq \sum_{(j,i)\in\sigma} (f^{n+1}_{j,i} - \tilde f^n_{j,i}) \phi'(f^{n+1}_{j,i})\,,\]
which comes from the convexity of $\phi$. By  definition of  the $\tilde f^n_{j,i}$ in \eqref{eq:def_tildef} together with the expression of $f^{n+1}_{j,i}$ in \eqref{eq:scheme_f}
\begin{equation} \label{eq:convex_coag}
 \sum_{(j,i)\in\sigma} \phi(f^{n+1}_{j,i}) - \phi(\tilde f^n_{j,i}) \leq \frac{\Dt}{\Dp\Dr} \sum_{(j,i)\in\sigma} C_{j,i} \phi'(f^{n+1}_{j,i})\,.
\end{equation}
Non negativity of $f^n$ yields
\[ C_{j,i} \leq   K \sum_{j'=0}^j \sum_{i'=0}^I \sum_{i''=0}^I  p_{j'} f^n_{j',i'} f^n_{j-j',i''} \delta_{j',i';j-j',i''}^{i,i-1} (\Dp\Dr)^2\,.\] 
then summing over $j$ and $i$, we get
\begin{multline} \label{eq:estim_phi_Cji}
  \sum_{(j,i)\in\sigma} \sum_{j'=0}^j \sum_{i'=0}^I \sum_{i''=0}^I  p_{j'} f^n_{j',i'} f^n_{j-j',i''} \delta_{j',i';j-j',i''}^{i,i-1} \phi'(f_{j,i}^{n+1})\, (\Dp\Dr)^2 
 \hfill \\
 \hfill  =  \sum_{(j',i')\in\sigma}   p_{j'} f^n_{j',i'} \left( \sum_{(j'',i'')\in\sigma^*}  f^n_{j'',i''} \phi'(f_{j'+j'',i^\#}^{n+1})\right)  \,(\Dp\Dr)^2
\end{multline}
where $i^\# \in \{0,\ldots,I\}$ such that $\delta_{j',i';j'',i''}^{i^\#,i^\#-1} = 1$. Now, we remark as in \cite{Bourgade2008}*{Lemma 3.2} and proving for instance with the help of \cite{Laurencot2002}*{Lemma B.1} that when $\phi$ fulfills \eqref{eq:def_phi}, we get that
\[ x\phi'(y) \leq \phi(x) + \phi(y), \quad \forall (x,y)\in\R_+^2\,.\]
Using this property and the bound on the first moment \eqref{eq:bound_moment1} in \eqref{eq:estim_phi_Cji}, it follows
\begin{multline} \label{eq:estim_intermediate2}
  \sum_{(j,i)\in\sigma}  C_{j,i} \phi'(f_{j,i}^{n+1})  \hfill \\
   \leq    K \sum_{(j',i')\in\sigma}   p_{j'} f^n_{j',i'} \left( \sum_{(j'',i'')\in\sigma}\phi(f^n_{j'',i''}) + \sum_{(j'',i'')\in\sigma} \phi(f_{j'+j'',i^\#}^{n+1})\right)  (\Dp\Dr)^2 \\
   \hfill \leq K P M^{in} \left( \sum_{(j,i)\in\sigma} \phi(f^n_{j,i}) + \sum_{(j,i)\in\sigma} \phi(f^{n+1}_{j,i}) \right) \Dp\Dr\,.
\end{multline}
Combining both \eqref{eq:estim_intermediate} and \eqref{eq:estim_intermediate2} with \eqref{eq:convex_coag}, we get
\begin{multline*}
 \sum_{(j,i)\in\sigma} \phi(f_{j,i}^{n+1}) \leq   (1 + 2 K_T \Dt )^2  \sum_{(j,i)\in\sigma} \phi(f_{j,i}^{n}) \hfill \\
 \hfill +    P K M^{in} \Dt \left( \sum_{(j,i)\in\sigma} \phi(f^n_{j,i}) + \sum_{(j,i)\in\sigma} \phi(f^{n+1}_{j,i}) \right)\, .
\end{multline*}
or in other term, when $\Dt <1$
\begin{multline*}
(1-PKM^{in}\Dt) \left(\sum_{(j,i)\in\sigma}  \phi(f_{j,i}^{n+1}) - \sum_{(j,i)\in\sigma}  \phi(f_{j,i}^{n}) \right) \hfill \\
\hfill \leq   \left( 4K_T(K_T+1) + 2PKM^{in} \right)\Dt \sum_{(j,i)\in\sigma} \phi(f^n_{j,i})
\end{multline*}
Dividing by $1-PKM^{in} \Dt \geq 1/2$ regarding the stability condition \eqref{eq:CFL}, thus it holds that for any $n\in\{0,\ldots,N\}$
\begin{equation*}
 \sum_{(j,i)\in\sigma}  \phi(f_{j,i}^{n}) \leq  e^{C T}\sum_{(j,i)\in\sigma} \phi(f^0_{j,i})\,,
\end{equation*}
where $C = 8K_T(K_T+1) + 4PKM^{in}$. The conclusion follows from the definition of $f_h$ in \eqref{eq:piecewise_fh} and $f_h^{in}$ in \eqref{eq:piecewise_fin} with the Jensen inequality, since 
\[ \phi(f^0_{j,i}) = \phi\left(\frac{1}{|\Lambda_{j,i}|} \int_{\Lambda_{j,i}}  f^{in}(p,r) \ dp dr \right) \leq  \frac{1}{|\Lambda_{j,i}|} \int_{\Lambda_{j,i}}  \phi\left(f^{in}(p,r)\right) \, dp dr\,.\]
It ends the proof.
\end{proof}

The direct consequence of Lemma \ref{lem:uniform_int} is that $(f_h)_{h}$ is weakly relatively compact in $L^1((0,T)\times\Sp_P)$ as a consequence of the Dunford-Pettis theorem, see \cite{Edwards1995}*{Theorem 4.21.2}. It proves there exists a subsequence (not relabeled) and $f\in L^1((0,T)\times\Sp_P)$ such that  
\begin{equation*}
 f_h \underset{h\rightarrow 0}{\rightharpoonup} f \qquad w-L^1((0,T)\times\Sp_P).
\end{equation*}
At this stage, the convergence is too weak to be able to pass to the limit, particularly in the quadratic term, and to get the final regularity of $f$ in Definition \ref{def:solution_numeric}. To this end, we will use the  piecewise linear in time approximation. Against invoking the Dunford-Pettis theorem, for all $t\in[0,T]$ we have that $\tilde f_h(t,\cdot)$ belongs to a relatively compact subset of $L^1(\Sp_P)$. Then, by Corollary \ref{cor:estim_continuous} we have that the sequence is equicontinuous in time for the strong topology of $L^1(\Sp_P)$, thus for the weak topology. So, applying Ascoli Theorem, there exists a subsequence of $\tilde f^{\Dt}_h$ (not relabeled) converging towards a $g$ in $C\left([0,T];w-L^1(\Sp_P)\right)$. Next, we remark that 
\begin{equation} \label{eq:cv_f-tildef}
 \sup_{t\in(0,T)} || \tilde f_h(t,\cdot) - f_h(t,\cdot) ||_{L^1(\Sp_P)} \leq C \Dt,
\end{equation}
which ensures that $g=f$.  Finally, by weak convergence we get 
\[|| f(t,\cdot) - f(s,\cdot) ||_{L^1(\Sp_P)} \leq \liminf_{h\rightarrow 0} || \tilde f_h(t,\cdot) - \tilde f_h(s,\cdot) ||_{L^1(\Sp_P)} \leq C |t-s|.\]
And this latter prove the continuity for the strong topology of $L^1(\Sp_P)$ of the limit $f$. This achieves the proof of the convergence \eqref{eq:cv1}-\eqref{eq:cv2} towards $f$ (not yet the solution).

%
%
%

But, it remains to prove the convergence \eqref{eq:cv3} of $u_h$ before passing to the limit. Indeed, in Corollary \ref{cor:estim_continuous}  we have \eqref{eq:born_uniform_u} the uniform bound, w.r.t. $h$, in $L^\infty(0,T)\cap BV(0,T)$, then the Helly Theorems, see \cite{Kolmogorov1975}*{Theorem 36.4 and 36.5}, entail that up to a subsequence (not-relabeled) there exist $u\in BV(0,T)$ such that the sequence $(u_h(t))_{h}$ converges towards $u(t)$ for every $t\in [0,T]$. This prove \eqref{eq:cv3}.

%
%

\subsection{Convergence of the numerical scheme}

Here we prove that the limit $f$ and $u$ obtained right before are solutions of the problem \eqref{eq:polymers_trunc}-\eqref{eq:ions_trunc} to conclude the proof of Theorem \ref{thm:convergence}.
 
\subsubsection{Reconstruction and convergence of the coagulation operator} \label{sec:reconstruct_coagulation}

One of the delicate point in the proof of convergence is to give an appropriate reconstruction of the quadratic operator, the coagulation, that convergences in a relevant sense.  In order to perform it, we define over $[0,T)\times\Sp_P$ the following approximation:
\begin{multline*}
 C_{P,h}(t,p,r)   \hfill \\
  = \int_{\Sp_P\times\Sp_P} \Phi^{1,h}_{p,r}(p',r';p'',r'') f_h(t,p',r') f_h(t,p'',r'') \, dr'' dp'' dr' dp' \\
 \hfill - \int_{\Sp_P\times\Sp_P}  \Phi^{2,h}_{p,r}(p',r';p'',r'') f_h(t,p',r')f_h(t,p'',r'') \,  dr'' dp'' dr' dp'.
\end{multline*}
where for any $(t,p,r)\in[0,T)\times\Sp_P$ and $(p',r';p'',r'')\in\Sp_P\times\Sp_P$, 
\begin{multline*}
 \Phi^{1,h}_{p,r}(p',r';p'',r'') = \indic{(0,P_h^-(p))}(p')\indic{(0,P_h^-(p)-P_h^-(p'))}(p'')  \hfill \\
 \hfill \times \ \indic{(0,R_h^+(r))}(V_h^\#(p',r';p'',r'')) P_h(p')a_h(p',r';p'',r''),
\end{multline*}
and
\begin{multline*}
\Phi^{2,h}_{p,r}(p',r';p'',r'') =  \indic{(0,P_h^-(p))}(p')  \indic{(0,P-P_h^-(p'))}(p'')  \hfill \\
\hfill \times \ \indic{(0,R_h^-(r))}(r') P_h(p') a_h(p',r';p'',r''),
\end{multline*}
with
\[V_h^\#(p',r';p'',r'') = \frac{R^+_h(r') P^+_h(p') + R^+_h(r'') P^+_h(p'') }{ P^-_h(p') + P^-_h(p') }.\]
With such definition, for all $n\in\{0,\ldots,N-1\}$ and $(j,i)\in\sigma$, it is straightforward that for any $(t,p,r)\in[t_n,t_{n+1})\times\Lambda_{j,i}$ we have
\[C_h(t,p,r) = C^n_{j-1/2,i-1/2}\,.\]
Now the convergence of $C_h$ will be a consequence of the following to lemma. The first one can be find as is in \cite{Bourgade2008}*{Lemma 3.5}. 
\begin{lemma}\label{lem:cvg}
 Let $\Omega$ be an open set of $\R^m$, $\kappa>0$ and let two sequences $(v_n)_{n\in\mathbb N}\subset L^1(\Omega)$ and $(w_n)_{n\in\mathbb N}\subset L^\infty(\Omega)$. If we assume that for all $n\in \mathbb N$, $|w_n|\leq\kappa$ and there exist $v\in L^1(\Omega)$ and $w\in L^\infty(\Omega)$ satisfying 
 \[ v_n \underset{n\rightarrow+\infty}{\longrightarrow} v, \quad weak-L^1(\Omega), \text{ and } \ w_n \underset{n\rightarrow+\infty}{\longrightarrow} w, \quad \text{a.e. in } \Omega. \]
 Then, 
 \[ \norm{v_n(w_n-w)}_{L^1(\Omega)}\underset{n\rightarrow+\infty}{\longrightarrow} 0, \text{ and } \ v_nw_n \underset{n\rightarrow+\infty}{\longrightarrow} vw,  \quad weak-L^1(\Omega). \]
\end{lemma}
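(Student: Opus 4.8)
The plan is to write the product as $v_nw_n = v_n(w_n-w) + v_nw$ and to treat the two summands separately: the first converges to zero \emph{strongly} in $L^1(\Omega)$, while the second converges \emph{weakly} to $vw$. The whole difficulty is concentrated in the strong convergence $\norm{v_n(w_n-w)}_{L^1(\Omega)}\to 0$, and the key structural fact I would exploit is that a weakly convergent sequence in $L^1$ is bounded and, by the Dunford--Pettis theorem (already invoked earlier in the paper), uniformly integrable and, when $\abs{\Omega}=+\infty$, tight.

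First I would prove the strong convergence. Fix $\varepsilon>0$. Using tightness, I would select a set $E\subset\Omega$ of finite measure with $\sup_n\int_{\Omega\setminus E}\abs{v_n}<\varepsilon$; since $\abs{w_n-w}\leq 2\kappa$, this bounds the contribution over $\Omega\setminus E$ by $2\kappa\varepsilon$. On the finite-measure set $E$ I would apply Egorov's theorem to the a.e.\ convergence $w_n\to w$, extracting $A\subset E$ on which $w_n\to w$ uniformly, with $\abs{E\setminus A}$ as small as I wish. On $A$ the integral $\int_A\abs{v_n}\abs{w_n-w}$ is at most $\norm{w_n-w}_{L^\infty(A)}\sup_n\norm{v_n}_{L^1(\Omega)}$, which vanishes because the supremum is finite (weak convergence implies boundedness) and the uniform norm tends to zero. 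On $E\setminus A$ I would shrink the measure enough that uniform integrability forces $\sup_n\int_{E\setminus A}\abs{v_n}<\varepsilon$, bounding this piece again by $2\kappa\varepsilon$. Summing the three contributions gives $\limsup_n\int_\Omega\abs{v_n}\abs{w_n-w}\leq 4\kappa\varepsilon$, and letting $\varepsilon\to 0$ yields the claim.

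For the weak convergence of the products I would reuse the decomposition. The term $v_n(w_n-w)$ already tends to zero in the strong $L^1$ topology, hence a fortiori weakly. For the term $v_nw$, I would test against an arbitrary $\psi\in L^\infty(\Omega)$ and observe that $w\psi\in L^\infty(\Omega)$; the weak convergence $v_n\rightharpoonup v$ in $L^1(\Omega)$ then gives directly $\int_\Omega v_nw\psi\to\int_\Omega vw\psi$, so $v_nw\rightharpoonup vw$. Adding the two pieces yields $v_nw_n\rightharpoonup vw$ in $w\text{-}L^1(\Omega)$.

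The step I expect to demand the most care is the interplay between Egorov's theorem and the Dunford--Pettis characterisation: Egorov is available only on a set of finite measure, so tightness is what lets me reduce to such a set before isolating the region where $w_n$ has not yet come close to $w$. In the present application $\Omega=\Sp\times\Sp$ (the truncated $\Sp_P\times\Sp_P$) has finite measure, so the tightness reduction is automatic and one may simply take $E=\Omega$ throughout; I would note this simplification but carry out the argument in full generality so that the lemma holds exactly as stated.
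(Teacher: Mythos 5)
Your proof is correct. Note that the paper itself gives no proof of this lemma---it is quoted ``as is'' from \cite{Bourgade2008}*{Lemma 3.5}---and your argument (decompose $v_nw_n=v_n(w_n-w)+v_nw$, then control $\norm{v_n(w_n-w)}_{L^1}$ by splitting $\Omega$ into a tight set, an Egorov set of uniform convergence, and a small-measure remainder handled by the equi-integrability that Dunford--Pettis extracts from weak $L^1$-convergence) is precisely the standard proof of that cited result, with the added care of treating the infinite-measure case, which is indeed moot in the application since $\Sp_P\times\Sp_P$ has finite measure.
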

The second lemma give us some useful properties on the functions $\Phi^{i,h}_{p,r}$.
\begin{lemma} \label{lem:cv_phi}
For $i=1,2$, 
\[ ||\Phi^{i,h}_{p,r}||_{L^\infty(\Sp_P\times\Sp_P)} \leq P K, \quad \forall (p,r)\in \Sp_P,\] 
and for all $(p,r)\in \Sp_P$,
\begin{align*}
 & \Phi^{1,h}_{p,r}(p',r';p'',r'') \underset{h\rightarrow 0}{\longrightarrow} \indic{(0,p)}(p')\indic{(0,p-p')}(p'') \indic{(0,r)}(v^\#) p' a(p',r';p'',r''), \\
 & \Phi^{2,h}_{p,r}(p',r';p'',r'') \underset{h\rightarrow 0}{\longrightarrow} \indic{(0,p)}(p')  \indic{(0,P-p')}(p'')  \indic{(0,r)}(r')  p' a(p',r';p'',r'').
\end{align*}
\emph{a.e.} $\Sp_P\times\Sp_P$.
\end{lemma}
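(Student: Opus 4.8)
The plan is to prove both assertions by bounding and then passing to the limit in each factor of the defining products, using only elementary convergence of the grid functions together with the a.e.\ convergence $a_h\to a$ already fixed. For the uniform bound I would note that every indicator appearing in $\Phi^{1,h}_{p,r}$ and $\Phi^{2,h}_{p,r}$ is bounded by $1$, that $P_h(p')<P$ since each midpoint $p_j=(j+1/2)\Dp$ lies in $(0,P)$, and that $\abs{a_h}\le K$ because $a_h$ is a cell-average of $a$ and $\norm{a}_{L^\infty}\le K$ by (H2'). Multiplying these three bounds gives $\norm{\Phi^{i,h}_{p,r}}_{L^\infty(\Sp_P\times\Sp_P)}\le PK$ for every $(p,r)\in\Sp_P$, which is the first claim.

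The a.e.\ convergence rests on the fact that all the piecewise-constant grid functions converge to their arguments, with explicit rates $\abs{P_h(p)-p}\le\Dp/2$, $\abs{P_h^\pm(p)-p}\le\Dp$ and $\abs{R_h^\pm(r)-r}\le\Dr$. In particular $P_h(p')\to p'$ everywhere, and the argument of the moving threshold in $\Phi^{1,h}$ converges, $V_h^\#(p',r';p'',r'')\to v^\#=(p'r'+p''r'')/(p'+p'')$, since its numerator and denominator converge separately.

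The one point needing care is the limit of indicators of moving intervals, for which I would prove the elementary statement: if $A_h\to A$ and $x_h\to x$ in $\R$ with $x\ne A$, then $\indic{(0,A_h)}(x_h)\to\indic{(0,A)}(x)$ (if $0<x<A$ then eventually $0<x_h<A_h$; if $x>A$ then eventually $x_h>A_h$; the degenerate case $A\le 0$ forces the limit interval to be empty as well). For every factor except the threshold on $V_h^\#$ the argument $x_h$ is constant, while for that factor both endpoint and argument move, and the general statement covers both situations. Applying it to the thresholds $P_h^-(p)$, $P_h^-(p)-P_h^-(p')$, $R_h^+(r)$ (against $V_h^\#$), $R_h^-(r)$ (against $r'$) and $P-P_h^-(p')$ yields the pointwise convergence of every indicator factor to the corresponding limit indicator; together with $P_h(p')\to p'$ and $a_h\to a$ a.e., this gives, as a finite product of a.e.-convergent functions, the two stated limits.

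Finally I would collect the exceptional sets: each indicator can fail to converge only where its argument meets the limiting endpoint, namely on $\{p'=p\}$, $\{p''=p-p'\}$, $\{v^\#=r\}$, $\{r'=r\}$ and $\{p''=P-p'\}$, to which one adds the null set where $a_h\not\to a$. For fixed $(p,r)$ each of these is the zero set of a nontrivial function of $(p',r',p'',r'')$ and hence Lebesgue-null in $\Sp_P\times\Sp_P$; the step I expect to be most delicate is $\{v^\#=r\}$, i.e.\ $p'(r'-r)+p''(r''-r)=0$, whose nullity I would justify by noting that $\partial_{r'}\big(p'(r'-r)+p''(r''-r)\big)=p'>0$ on $\Sp_P$, so by Fubini the set meets a.e.\ vertical line in a single point. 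Off the null union of all these sets every factor converges, so $\Phi^{1,h}_{p,r}$ and $\Phi^{2,h}_{p,r}$ converge a.e.\ to the claimed limits, which finishes the proof.
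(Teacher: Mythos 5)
Your proof is correct, but it follows a genuinely different route from the paper's. The paper isolates the single delicate factor $\indic{(0,R_h^+(r))}(V_h^\#)$ and proves it converges to $\indic{(0,r)}(v^\#)$ in $L^1(\Sp_P\times\Sp_P)$: the set where the two indicators differ is squeezed between the level sets of $V_h^\#$ and $v^\#$ (the paper invokes a one-sided comparison between them --- stated with the inequality in the wrong direction, since the definition of $V_h^\#$ uses upper endpoints in the numerator and lower endpoints in the denominator, giving $V_h^\# \geq v^\#$ --- but the argument only needs that both tend to $v^\#$), and this set shrinks to the null set where $v^\# = r$; almost-everywhere convergence then comes out only up to a further unrelabeled subsequence, consistent with the paper's standing convention. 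You instead argue directly pointwise, via an elementary lemma on indicators of moving intervals applied to each factor, together with an explicit inventory of exceptional sets shown to be null; your Fubini argument for $\{v^\# = r\}$ supplies precisely what the paper asserts without proof when it declares $v^\#{}^{-1}(r)$ to be a null set. What your approach buys: a.e.\ convergence of the full sequence, with no extra subsequence extraction, and a uniform treatment of all five indicator factors rather than only the delicate one; what it costs is a longer case analysis. One caveat: your moving-interval lemma is false in the case $x = 0 < A$ (take $x_h \downarrow 0$ with $x_h>0$, so $\indic{(0,A_h)}(x_h)=1$ eventually while $\indic{(0,A)}(0)=0$), so its hypothesis should read $x \neq A$ and $x \neq 0$; this is harmless in your applications because every argument you feed it ($p'$, $p''$, $r'$, $v^\#$) is strictly positive on the open set $\Sp_P \times \Sp_P$.
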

\begin{proof}
The first inequality follows from  hypothesis (H2'). Then, we only have to check that $ \indic{(0,R_h^+(r))}(V_h^\#(p',r';p'',r''))$ converge almost every where. Indeed for all $r\in(0,1)$,

\begin{multline} \label{decompose_indic}
\int_{\Sp_P\times \Sp_P} \abs{\indic{(0,R_h^+(r))}(V_h^\#) - \indic{(0,r)}(v^\#)} dr''dp''dr'dp'  \hfill \\
\leq \int_{\Sp_P\times \Sp_P} \abs{\indic{(0,R_h^+(r))}(V_h^\#) - \indic{(0,R_h^+(r))}(v^\#)} dr''dp''dr'dp' \\
\hfill + \int_{\Sp_P\times \Sp_P} \abs{\indic{(0,R_h^+(r))}(v^\#) - \indic{(0,r)}(v^\#)} dr''dp''dr'dp' \,,
\end{multline}
where
\[V_h^\#(p',r';p'',r'') = \frac{R^+_h(r') P^+_h(p') + R^+_h(r'') P^+_h(p'') }{ P^-_h(p') + P^-_h(p') } \leq v^\# = \frac{r'p'+r''p''}{p'+p''},\]
Therefore, the first integral in the right hand side of \eqref{decompose_indic} is reduced to the measure of the set 
\[ A_h = \left\{ (p',r',p'',r'') \in \Sp_P\times\Sp_P\  : \ V_h^\# \leq R_h^+(r) \leq v^\# \right\}.\]
Remarking that $V_h^\#$ converge everywhere to $v^\#$ and $R_h^+(r)$ towards identity, $A_h$ converges towards $v^\#{}^{-1}(r)$ which is a null set for the Lebesgue measure. It remains to remark that the second integral in \eqref{decompose_indic} converges to zero too, and conclude that $\indic{(0,R_h^+(r))}(V_h^\#)$ converges toward $\indic{(0,r)}(v^\#)$ in $L^1(\Sp_P\times\Sp_P)$. This provide us the convergence almost everywhere on $\Sp_P\times\Sp_P$, up to a subsequence (against not relabeled).  
\end{proof}


The sequence $f_h$ do not have a sufficient regularity so, to pass to the limit, the trick is to consider the operator 
\begin{multline*}
 \tilde C_{P,h}(t,p,r) =  \int_{\Sp_P\times\Sp_P} \Phi^{1,h}_{p,r}(p',r';p'',r'') \tilde f_h(t,p',r') \tilde f_h(t,p'',r'') \ dr'' dp'' dr' dp'\\
 - \int_{\Sp_P\times\Sp_P}  \Phi^{2,h}_{p,r}(p',r';p'',r'') \tilde f_h(t,p',r') \tilde f_h(t,p'',r'') \  dr'' dp'' dr' dp'.
\end{multline*}
Here we proceed as in \cite{Bourgade2008}*{Section 4}, applying twice Lemma \ref{lem:cvg} thanks to Lemma \ref{lem:cv_phi}, we get 
\begin{equation*}
 \tilde C_{P,h} \underset{h\rightarrow 0}{\longrightarrow} C_P, \quad \text{on } \ [0,T)\times \Sp_P\,. 
\end{equation*}
Finally, by Lemma \ref{lem:cv_phi}, Corollary \ref{cor:estim_continuous} and the convergence obtained in \eqref{eq:cv_f-tildef}, we have
\begin{multline*}
    |C_{P,h}(t,p,r) - \tilde C_{P,h}(t,p,r) |  \hfill \\ 
    \leq 2 K P \left( \norm{f_h}_{L^\infty(0,T;L^1)} + \lVert \tilde f_h \rVert_{L^\infty(0,T;L^1)} \right) \lVert f_h - \tilde f_h \rVert_{L^\infty(0,T;L^1)} \\
    \hfill \underset{h\rightarrow 0}{\longrightarrow}  0, \quad \forall (t,p,r)\in[0,T)\times \Sp_P\,.
\end{multline*}
Thus, since $C_{P,h} = C_{P,h} - \tilde C_{P,h} + \tilde C_{P,h}$, we have
\begin{equation*}
  C_{P,h}  \underset{h\rightarrow 0}{\longrightarrow} C_P, \quad \text{on } \ [0,T)\times \Sp_P\,. 
\end{equation*}
Moreover, it is obvious that $C_{P,h}$ is bounded by the bound \eqref{eq:bound_moment0_continuous} and Lemma \ref{lem:cv_phi}, then the Lebesgue dominated convergence theorem yields 
\begin{equation*}
  C_{P,h}(t,\cdot) \underset{h\rightarrow 0}{\longrightarrow} C_P(t,\cdot), \quad L^1(\Sp_P) \quad \forall t\in[0,T). 
\end{equation*}

\subsubsection{Final stage of the proof}

The final stage of the proof is to write the  discrete weak formulation of the scheme, when the equation \eqref{eq:scheme_f} is multiplied by discrete test functions $\vphi_{j,i}$, and then to prove that it converges to the continuous weak formulation. Thus, let $\vphi\in C^2(\Sp_P)$ and multiply equation \eqref{eq:scheme_f} by  $\vphi_{j,i}=\vphi(\pjmh,\rimh)$. Then summing over $(j,i)$ and $k=0,\ldots,n-1$ for some $n\in\{1,\ldots,N\}$, we get
\begin{multline*}
  \sum_{k=0}^{n-1} \sum_{j=0}^J \sum_{i=0}^I  p_j f^{k+1}_{j,i} \vphi_{j,i} \Dr \Dp -  \sum_{n=0}^{k-1} \sum_{j=0}^J \sum_{i=0}^I p_j f^k_{j,i}\vphi_{j,i} \Dr \Dp   \hfill \\ \hfill  = - \Dt  \sum_{k=0}^{n-1} \sum_{j=0}^J \sum_{i=0}^I  \left( F^k_{j,i+1/2}  - F^k_{j,i-1/2} \right) \vphi_{j,i}\Dp   +  \Dt  \sum_{k=0}^{n-1} \sum_{j=0}^J \sum_{i=0}^I C_{j,i}^k \vphi_{j,i}.
\end{multline*}
Reordering the sum, and making use of the boundary conditions \eqref{eq:flux_r_boundary} and \eqref{eq:boundary_C}, we infer the following equation
\begin{equation} \label{eq:weak_discrete}
 X^n_h =  Y_h^n + Z_h^n,
\end{equation}
where
\begin{align*}
  X^n_h & = \sum_{j=0}^J \sum_{i=0}^I  p_j f^{n}_{j,i} \vphi_{j,i} \Dr \Dp - \sum_{j=0}^J \sum_{i=0}^I p_j f^{0}_{j,i}\vphi_{j,i} \Dr \Dp, \\
  Y_h^n & = \Dt \sum_{k=0}^{n-1} \sum_{j=0}^J \sum_{i=1}^I  F^k_{j,i-1/2}( \vphi_{j,i} -  \vphi_{j,i-1}) \Dp,\\
  Z_h^n & = \Dt \sum_{k=0}^{n-1}\sum_{j=1}^J \sum_{i=1}^I {C}^k_{j-1/2,i-1/2}\Bigg[(\vphi_{j-1,i-1} - \vphi_{j-1,i}) - (\vphi_{j,i-1} - \vphi_{j,i})\Bigg] \nonumber \\
	& \qquad + \Dt \sum_{k=0}^{n-1} \sum_{j=1}^J {C}^k_{j-1/2,I+1/2} (\vphi_{j-1,I}-\vphi_{j,I} ) \nonumber \\
	& \qquad + \Dt \sum_{k=0}^{n-1}\sum_{i=1}^I {C}^n_{J+1/2,i-1/2}(\vphi_{J,i-1} - \vphi_{J,i}).
\end{align*}
Next, we define $X_h$ on $[0,T)$ by 
\begin{multline} \label{eq:X}
X_h(t) \coloneqq  \iint_{\Sp_P} P_h(p) f_h(t,p,r) \vphi(P_h^-(p),R_h^-(r))\, dr dp \\
- \iint_{\Sp_P} P_h(p) f^{in}_h(p,r) \vphi(P_h^-(p),R_h^-(r))\, dr dp.
\end{multline}
Then, we define $Y_h$ by
\begin{equation}\label{eq:Y}
Y_h(t)= Y_h^1(t) + Y_h^2(t),
\end{equation}
with
\begin{multline*}
 Y_h^1(t)  = \int_{0}^{t} \iint_{\Sp_P}  \indic{\Theta_{h}(t)}(s) \indic{(0,1-\Dr)}(r) \V^{+}(u_h(s),P_h^-(p),R_h^-(r)) f_h(s,p,r)\\ 
 \times D_h^0[\vphi](p,r) \, dr dp ds,
\end{multline*}
and
\begin{multline*}
 Y_h^2(t)  = - \int_{0}^{t} \iint_{\Sp_P} \indic{\Theta_{h}(t)}(s) \indic{(\Dr,1)}(r) \V^{-}(u_h(s),P_h^-(p),R_h^-(r)) f_h(s,p,r)\\
 \times D_h^0[\vphi](p,r) \, dr dp ds,
\end{multline*}
where a Taylor expansion of $\vphi$ gives
\begin{equation*}
 D_h^0[\vphi](p,r) = \frac{\partial \vphi}{\partial r} ((P_h^-(p),R_h^-(p))) + o(\Dr).
\end{equation*}
In the same manner, we define $Z_h$ by
\begin{equation} \label{eq:Z}
Z_h(t) =  Z_h^1(t) + Z_h^2(t) + Z_h^3(t),
\end{equation}
such that
\begin{align*}
  Z_h^1(t) & = \int_0^t \int_{\Sp_R} \indic{(0,\Theta_{h}(t))}(s) {C}_h(s,p,r) D_h^1[\varphi](p,r) \, dr dp  ds \\
  Z_h^2(t) & = \int_0^t \int_0^P \indic{(0,\Theta_{h}(t))}(s) {C}_h(s,p,1) D_h^2[\varphi](p,1) \, dp ds \\
  Z_h^3(t) & = \int_0^t \int_0^1 \indic{(0,\Theta_{h}(t))}(s) {C}_h(s,P,r) D_h^3[\varphi](P,r) \, dr ds 
\end{align*}
with the expansion
\begin{align*}
  D_h^1[\varphi](p,1) & =  \frac{\partial^2 \vphi}{\partial p \partial r}(P_h(p),R_h(r)) + o(\Dp) + o(\Dr),\\
  D_h^2[\varphi](P,r) & =  \frac{\partial \vphi}{\partial r}(P,R_h^-(r)) + o(\Dr),\\
  D_h^3[\varphi](p,r) & =  \frac{\partial \vphi}{\partial p}(P_h^-(p),1) + o(\Dp).
\end{align*}
%
%
It is straightforward that for any $n\in\{1,\ldots,N\}$ and $t\in[t_n,t_{n+1})$ we have 
\[ X_h(t) = X^n_h, \quad  Y_h(t) = Y^n_h, \quad \text{and} \quad Z_h(t) = Z^n_h\,.\]
Thus, by  \eqref{eq:weak_discrete},  it holds that for all $t\in[0,T)$
\begin{equation} \label{eq:reconstruct_continuous_f}
 X_h(t) = Y_h(t) + Z_h(t)\,.
\end{equation}
For the same reason, we get
\begin{multline} \label{eq:reconstruct_continuous_u}
 u_h(t) = u^{in} - \int_0^t \iint_{\Sp_P} \indic{(0,\Theta_{h}(t))}(s) \Bigg(\V^{+}(u_h(s),P_h^-(p),R_h^-(r)) \\ 
  - \V^{-}(u_h(s),P_h^-(p),R_h^-(r)) \Bigg) f_h(s,p,r) \, drdpds
\end{multline}
In view of \eqref{eq:reconstruct_continuous_f} and \eqref{eq:reconstruct_continuous_u} the conclusion readily follows. 
Indeed, to pass to the limit in \eqref{eq:X}, it is convenient to introduce $\tilde X_h$ where $f_h$ is replaced by $\tilde f_h$, then the same arguments as Section \ref{sec:reconstruct_coagulation} holds true. We write $X_h = X_h -\tilde X_h + \tilde X_h$, then it is clear that 
\[ \norm{X_h -\tilde X_h}_{L^\infty(0,T)} \rightarrow 0\,, \]
by virtue of \eqref{eq:cv_f-tildef}. Then, for all $t\in(0,T)$ we prove that $\tilde X_h$ converge towards the right term by Lemma \ref{lem:cvg}. For \eqref{eq:Y} and \eqref{eq:Z} we do the same decomposition, remarking two points. On one hand, the continuity of $\V$ and the pointwise convergence of $u_h$ allow us to correctly pass to the limit in the positive and negative parts of $\V=\V^+ - \V^⁻$. On the other hand, the time integral is treated thanks to the Lebesgue dominated convergence theorem. Equation \eqref{eq:reconstruct_continuous_u} is treated by the same arguments. Proof of Theorem \ref{thm:convergence} is achieved.

\section{Numerical illustration and long-time behaviour}
\label{sec:illustration}

In this section we choose to illustrate our numerical scheme by simulating a particular example which depicts the asymptotic behaviour of the solution. The simulation of this example seeks to show the typical behaviour of the long-time solution for a wide class of coefficients. 

\subsection{The numerical examples} \label{ssec:exp}

Let us first introduce the coefficients and initial conditions we choose for the simulation. We let $u^{in}=0.9$, $P=1$ and for all $(p,r)\in(0,P)\times(0,1)$

\[f^{in}(p,r) = m \cdot \exp\left( - \frac{(\log(p)+2)^2}{2\cdot0.4^2} - \frac{(r-0.2)^2}{2\cdot0.05^2} \right) \]
with $m$ is a normalisation constant such that 
\[ \int_0^P \int_0^1 r p f^{in}(p,r)\,drdp = 0.1 \,.\]
We notice in $r=0$ and $1$ this function is closed to zero, so numerically we require it vanishes. In this case, $\rho = 1$ since 
\[ u^{in} + \int_0^P \int_0^1 r p f^{in}(p,r)\,drdp  = \rho \,.\]
Then, we used the association-dissociation and the coagulation rates 
\[ \V(u,p,r) = 4p(1-r) u -r \,, \text{ and } \, a(p,r;p',r') = 1\,.\]

In Figures \ref{fig:simu1} and \ref{fig:simu2}, we present the result obtain with the numerical scheme introduced in \ref{sec:scheme}. The first picture of Figure \ref{fig:simu1} is the initial condition. Then, in the second and third, we see that polymers are capturing  metal ions since for each $p$ the distribution shifts towards greater $r$. It is confirmed by Figure \ref{fig:simu_u}, where the concentration $u$ at the same time is decreasing. In the last picture of Figure \ref{fig:simu1}, it appears biggest polymers since the tail of the distribution moves to the right. Finally, in Figure \ref{fig:simu2}, the distribution $f$ seems to be well concentrated onto a curve while the mass moves towards the biggest polymers (right). At this stage, the concentration of metal ions seems to reach a steady state, see Figure \ref{fig:simu_u}, and the coagulation is predominant.

\begin{figure}[!htb]
 {\centering
 \includegraphics{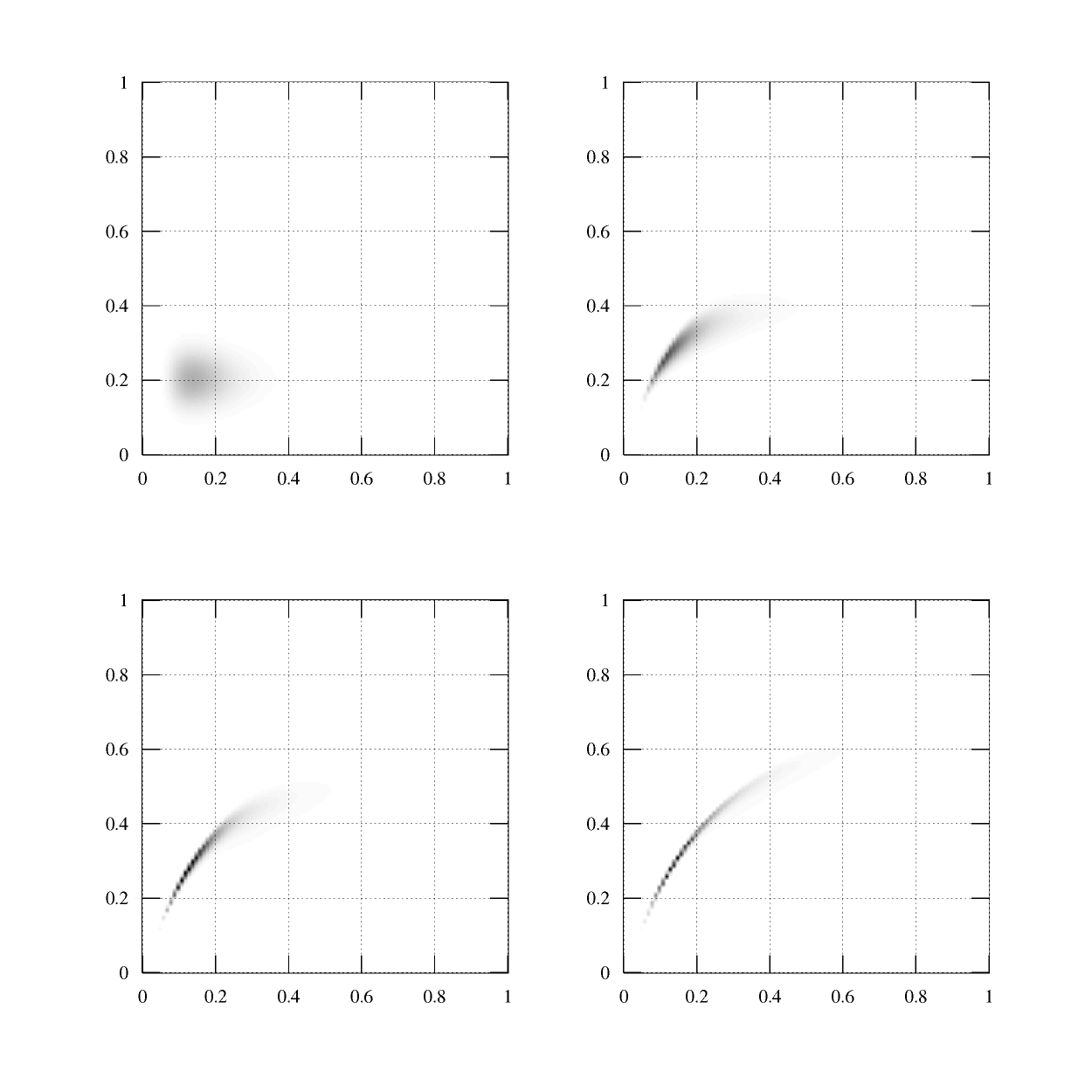} }
 \caption{ {\bf Evolution of the configurational distribution of polymers}. Each snapshot represents the solution at different times from left to right and up to down with $t=0,\, 0.125,\, 0.25,\, 0.5$ where  $p$ is in abscissa and $r$ in ordinate and the grayscale color vary from white to black when $f(t,p,r)$ vary from $0$ to $550$. The simulation was performed with the condition described in Section \ref{sec:illustration} on a regular grid $100\times 100$, {\it i.e.} $\Dp =\Dr = 0.01$ and a time step $\Dt = 1.25 \cdot 10^{-4}$. }
  \label{fig:simu1}
\end{figure}

\begin{figure}[!htb]
 {\centering
 \includegraphics{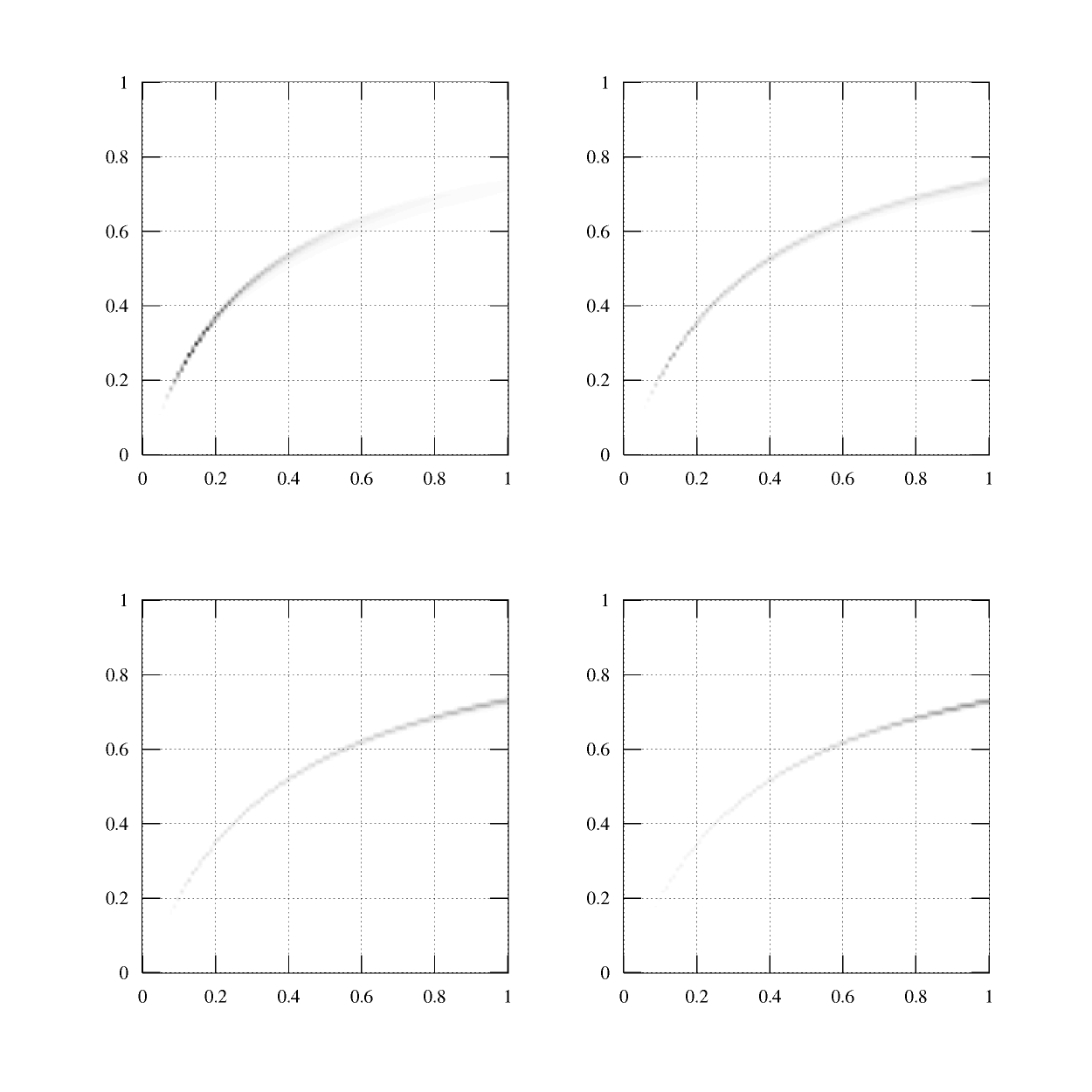} }
 \caption{ {\bf Evolution of the configurational distribution of polymers after a while}. Each snapshot represents the solution at different times from left to right and up to down with $t=1,\, 2, \, 3.5,\, 5$ where  $p$ is in abscissa and $r$ in ordinate the grayscale color vary from white to black when $f(t,p,r)$ vary from $0$ to $350$ . The simulation was performed with the condition described in Section \ref{sec:illustration} on a regular grid $100\times 100$, {\it i.e.} $\Dp =\Dr = 0.01$ and a time step $\Dt =  1.25 \cdot 10^{-4}$.}
  \label{fig:simu2}
\end{figure}
 
\begin{figure}[!htb]
 \centering \includegraphics{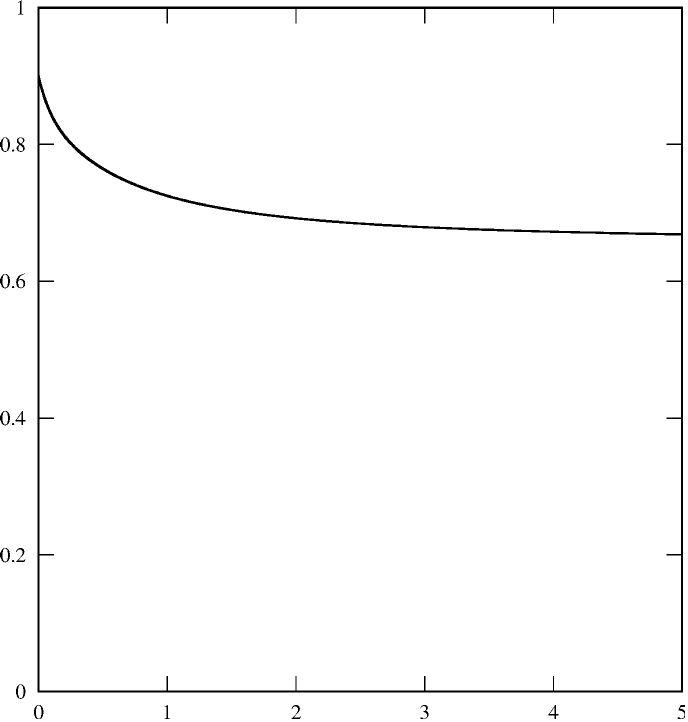} 
 \caption{ {\bf Evolution of the metal ions concentration}. In abscissa is the time $t$ and ordinate is the concentration $u(t)$. The simulation was performed with the condition described in Section \ref{sec:illustration} and corresponds to the solution associate to the numerical solution given in Figures \ref{fig:simu1} and \ref{fig:simu2}. }
 \label{fig:simu_u}
\end{figure}
\vfill\hspace*{0mm}
\subsection{Long-time behaviour}

In the case where the association-dissociation rate has a unique $0$, it is understood that for all $u(t)>0$ and $p>0$, there exists a unique number denoted by $r_t(p)$ such that 

\[\V(u(t),p,r_t(p))=0\, ,\]

then, the distribution function $f$  concentrates towards the cure $p\mapsto r_t(p)$. From a chemical point of view, this curve represents the instantaneous  quantity of metal ion at equilibrium with a polymer of size $p$. It rests on the hypothesis that each size of polymers (under fixed conditions) get a unique preferential ratio of metal ions $r$. The case where  $\V$ has more than one zero would be more complex.

Then, if $u(t)$ converges to a constant $u^\infty$ when $t\rightarrow +\infty$ this curves is given by $r_\infty(p)$. In the numerical example we gave this curves is 

\[ r_\infty (p) = \frac{4p}{4pu^\infty+1}\,. \]

Moreover if we take $u^\infty \simeq u(T=5)$ this curves fit well with what we see in Figure \ref{fig:simu2}. This hypothesis would mean that each size $p$ of polymers has a preferential ratio $r$ of metal ions for given conditions of temperature, pH, \emph{etc}. It would remains to prove that experimentally, if not, and if there more than one nullcline to $\V$ the behaviour would be different and probably dependent on the initial conditions. 

Thus, if we had to conjecture the long-time behaviour of the solution, we would say it is like
\[ f(t,p,r) \sim_{t\rightarrow +\infty} g(t,p) \delta_{r_t(p)}(r)\]
where $g$ satisfies an equation (defined later) and $r_t(p)$ is the solution of $\V(u(t),p,r_t(p))=0$, uniquely define for all $p>0$ and $t\geq0$. It gives the instantaneous equilibrium of the association-dissociation reactions. Now, if we plug $g(t,p) \delta_{r_t(p)}(r)$ (which is a measure) in the weak formulation, we formally get, when taking a test function $\varphi(p,r) = \psi(p)$,

\begin{equation}\label{nonautonomous}
\int_0^\infty g(t,p) \psi(p) \,  dp - \int_S g^{in}(p) \psi(p) \,  dp =  \int_0^t \int_0^\infty \mathcal Q(g,g)(s,p) \psi(p) \, dr dp \, ds\,, 
\end{equation}
with
\begin{multline} \label{eq:Q_bar}
 \int_{0}^{+\infty} \mathcal Q (g,g)(t,p) \psi(p)\, dp = \frac 1 2 \int_{0}^{+\infty} \int_{0}^{+\infty}  b(t;p,p')g(t,p)g(t,p') \\
 \times \Big[ \psi(p+p') - \psi(p) -\psi(p') \Big] dp'dp\,.
\end{multline}
where $b(t;p,p') = a(p,r_t(p);p',r_t(p')$. The operator $\mathcal Q$ in \eqref{eq:Q_bar} is exactly the weak formulation of the {\it classical} coagulation operator, see for instance \cite{Laurencot2002} among others, with a non-autonomous coagulation rate. The non-autonomous coagulation equation received a little attention up to our knowledge except in \cite{Mclaughlin}. 

Thus $g$ would satisfies a non-autonomous coagulation equation \eqref{nonautonomous}. But in the case where $u(t)$ reach a steady state,  $b(t;p,p') \rightarrow b_\infty(p.p')=a(p,r_\infty(p);p',r_\infty(p'))$. 
It might be probable the non-autonomous coagulation equation behave asymptotically like an autonomous coagulation equation with coefficient $b_\infty$. The long-time behaviour of an autonomous coagulation equation reveal self-similarity, see for instance \cite{Fournier2005}. Here, the interpretation we propose from the numerical simulation is a propagation, probably with a self-similar profile, of the distribution over the curve $p\mapsto r_\infty(p)$. The analysis of this problem would be a full work in its own right. So we leave it for now. 

Nevertheless, such behaviour would be taken into account in the experimental procedures. Indeed, the choose of the size distribution could be crucial in the efficacy of the process but they need to diminish the effect of the coagulation to avoid its interference with the membrane for instance \cites{Rivas2003,Rivas2011}.  

\section{Conclusion}

In this work, we dealt with a new model with applications to water-polymers getting particular affinity with metal-ions. These equations can be seen as a variation around the coagulation equation or LS equation. Nevertheless, it includes various specific features which make it an original problem. We want to mention particularly the conservations involved, the nature of the configuration space and the structure of the coagulation operator. We established a first result of existence for a large class of initial data. Then, we established a finite volume scheme and we proved a convergence result. This numerical scheme is used to get an approximation of the solution in particular case which illustrate the long-time behaviour of the solution.

There are several possible extension of this work. In the first section, the class of coefficient should be relax, and may be the monotonicity. To relax the coefficients, it would be possible to use a similar $L^1 - weak$ stability principle as done for the convergence  of the numerical scheme by sequence of approximating coefficients. Of course the question of the uniqueness is still open here. Concerning the numerical scheme, it would be interesting to develop a new one which captures in a better manner the concentration on the curve. Finally, It remains to rigorously demonstrate the question of the long-time behaviour.

\section*{Acknowledgments}
The authors would thanks Bernab\'e Rivas and Julio S\'anchez, from the Department of Polymers at University of Concepci\'on, for their time, advices and very helpful discussions to derive the model.

\noindent EH acknowledges support of FONDECYT Postdoc grant no. 3130318 (Chile).

\noindent MS thanks the support of FONDECYT grant no. 1140676, CONICYT project Anillo ACT1118 (ANANUM), Red Doctoral REDOC.CTA, project UCO1202 at Universidad de Concepci\'on, Basal, CMM, Universidad de Chile and CI\textsuperscript{2}MA, Universidad de Concepci\'on.

\bibliographystyle{amsplain}

\begin{bibdiv}
\begin{biblist}

\bib{Amann2005}{article}{
      author={Amann, Herbert},
      author={Walker, Christoph},
       title={Local and global strong solutions to continuous
  coagulation-fragmentation equations with diffusion},
        date={2005},
        ISSN={0022-0396},
     journal={J. Differential Equations},
      volume={218},
      number={1},
       pages={159\ndash 186},
         url={http://dx.doi.org/10.1016/j.jde.2004.09.004},
      review={\MR{2174971 (2006f:35140)}},
}

\bib{Bourgade2008}{article}{
      author={Bourgade, Jean-Pierre},
      author={Filbet, Francis},
       title={Convergence of a finite volume scheme for
  coagulation-fragmentation equations},
        date={2008},
        ISSN={0025-5718},
     journal={Math. Comp.},
      volume={77},
      number={262},
       pages={85\ndash 882},
         url={http://dx.doi.org/10.1090/S0025-5718-07-02054-6},
      review={\MR{2373183 (2008m:82077)}},
}

\bib{Broizat2010}{article}{
      author={Broizat, Damien},
       title={A kinetic model for coagulation-fragmentation},
        date={2010},
        ISSN={0294-1449},
     journal={Ann. Inst. H. Poincar\'e Anal. Non Lin\'eaire},
      volume={27},
      number={3},
       pages={809\ndash 836},
         url={http://dx.doi.org/10.1016/j.anihpc.2009.11.014},
      review={\MR{2629881 (2011i:82058)}},
}

\bib{Ciuperca2012}{article}{
      author={Ciuperca, Ionel~Sorin},
      author={Hingant, Erwan},
      author={Palade, Liviu~Iulian},
      author={Pujo-Menjouet, Laurent},
       title={Fragmentation and monomer lengthening of rod-like polymers, a
  relevant model for prion proliferation},
        date={2012},
     journal={Discrete Contin. Dyn. S. - B},
      volume={17},
      number={3},
       pages={775\ndash 799},
         url={http://dx.doi.org/10.3934/dcdsb.2012.17.775},
}

\bib{Collet1999}{article}{
      author={Collet, J.-F.},
      author={Goudon, T.},
       title={Lifshitz-{S}lyozov equations: the model with encounters},
        date={1999},
        ISSN={0041-1450},
     journal={Transport Theory Statist. Phys.},
      volume={28},
      number={6},
       pages={545\ndash 573},
         url={http://dx.doi.org/10.1080/00411459908214517},
      review={\MR{1714458 (2001i:82053)}},
}

\bib{Collet2000}{article}{
      author={Collet, Jean-Fran{\c{c}}ois},
      author={Goudon, Thierry},
       title={On solutions of the {L}ifshitz-{S}lyozov model},
        date={2000},
        ISSN={0951-7715},
     journal={Nonlinearity},
      volume={13},
      number={4},
       pages={1239\ndash 1262},
         url={http://dx.doi.org/10.1088/0951-7715/13/4/314},
      review={\MR{1767957 (2001b:82048)}},
}

\bib{Collet2002}{article}{
      author={Collet, Jean-Fran{\c{c}}ois},
      author={Goudon, Thierry},
      author={Vasseur, Alexis},
       title={Some remarks on large-time asymptotic of the {L}ifshitz-{S}lyozov
  equations},
        date={2002},
        ISSN={0022-4715},
     journal={J. Statist. Phys.},
      volume={108},
      number={1-2},
       pages={341\ndash 359},
         url={http://dx.doi.org/10.1023/A:1015404021853},
      review={\MR{1909562 (2003d:82075)}},
}

\bib{Dellacherie1988}{book}{
      author={Dellacherie, Claude},
      author={Meyer, Paul-Andr{\'e}},
       title={Probabilities and potential. {C}},
      series={North-Holland Mathematics Studies},
   publisher={North-Holland Publishing Co.},
     address={Amsterdam},
        date={1988},
      volume={151},
        ISBN={0-444-70386-1},
        note={Potential theory for discrete and continuous semigroups,
  Translated from the French by J. Norris},
      review={\MR{939365 (89b:60002)}},
}

\bib{Desvilettes2009}{article}{
      author={Desvillettes, Laurent},
      author={Fellner, Klemens},
       title={Large time asymptotics for a continuous coagulation-fragmentation
  model with degenerate size-dependent diffusion},
        date={2009},
        ISSN={0036-1410},
     journal={SIAM J. Math. Anal.},
      volume={41},
      number={6},
       pages={2315\ndash 2334},
         url={http://dx.doi.org/10.1137/090752602},
      review={\MR{2579715 (2011c:35252)}},
}

\bib{Diperna1989}{article}{
      author={DiPerna, Ronald~J},
      author={Lions, Pierre-Louis},
       title={On the Cauchy problem for Boltzmann equations: global existence
  and weak stability},
        date={1989},
     journal={Ann. of Math.},
      volume={130},
      number={2},
       pages={321\ndash 366},
}

\bib{Dubovskii1996}{article}{
      author={Dubovskii, PB},
      author={Stewart, IW},
       title={Existence, uniqueness and mass conservation for the
  coagulation-fragmentation equation},
        date={1996},
     journal={Math. Methods Appl. Sci.},
      volume={19},
      number={7},
       pages={571\ndash 591},
}

\bib{Edwards1995}{book}{
      author={Edwards, R.~E.},
       title={Functional analysis},
   publisher={Dover Publications Inc.},
     address={New York},
        date={1995},
        ISBN={0-486-68143-2},
        note={Theory and applications, Corrected reprint of the 1965 original},
      review={\MR{1320261 (95k:46001)}},
}

\bib{Escobedo2003}{article}{
      author={Escobedo, M},
      author={Lauren{\c{c}}ot, P},
      author={Mischler, S},
      author={Perthame, B},
       title={Gelation and mass conservation in coagulation-fragmentation
  models},
        date={2003},
     journal={J. Differential Equations},
      volume={195},
      number={1},
       pages={143\ndash 174},
      review={\MR{2019246 (2004k:82069)}},
}

\bib{Escobedo2005}{article}{
      author={Escobedo, M.},
      author={Mischler, S.},
      author={Rodriguez~Ricard, M.},
       title={On self-similarity and stationary problem for fragmentation and
  coagulation models},
        date={2005},
        ISSN={0294-1449},
     journal={Ann. Inst. H. Poincar\'e Anal. Non Lin\'eaire},
      volume={22},
      number={1},
       pages={99\ndash 125},
         url={http://dx.doi.org/10.1016/j.anihpc.2004.06.001},
      review={\MR{2114413 (2006b:35034)}},
}

\bib{Filbet2008}{article}{
      author={Filbet, Francis},
       title={An asymptotically stable scheme for diffusive
  coagulation-fragmentation models},
        date={2008},
        ISSN={1539-6746},
     journal={Commun. Math. Sci.},
      volume={6},
      number={2},
       pages={257\ndash 280},
         url={http://projecteuclid.org/getRecord?id=euclid.cms/1214949923},
      review={\MR{2433696 (2009f:82027)}},
}

\bib{Filbet2003}{article}{
      author={Filbet, Francis},
      author={Lauren{\c{c}}ot, Philippe},
       title={Numerical approximation of the {L}ifshitz-{S}lyozov-{W}agner
  equation},
        date={2003},
        ISSN={0036-1429},
     journal={SIAM J. Numer. Anal.},
      volume={41},
      number={2},
       pages={563\ndash 588},
         url={http://dx.doi.org/10.1137/S0036142902407599},
      review={\MR{2004188 (2004g:65106)}},
}

\bib{Filbet2004}{article}{
      author={Filbet, Francis},
      author={Lauren{\c{c}}ot, Philippe},
       title={Mass-conserving solutions and non-conservative approximation to
  the {S}moluchowski coagulation equation},
        date={2004},
        ISSN={0003-889X},
     journal={Arch. Math.},
      volume={83},
      number={6},
       pages={558\ndash 567},
         url={http://dx.doi.org/10.1007/s00013-004-1060-9},
      review={\MR{2105334 (2005h:82081)}},
}

\bib{Gabriel2012}{article}{
      author={Gabriel, Pierre},
       title={Long-time asymptotics for nonlinear growth-fragmentation
  equations},
        date={2012},
        ISSN={1539-6746},
     journal={Commun. Math. Sci.},
      volume={10},
      number={3},
       pages={787\ndash 820},
      review={\MR{2911197}},
}

\bib{Goudon2012}{article}{
      author={Goudon, Thierry},
      author={Lagouti{\`e}re, Fr{\'e}d{\'e}ric},
      author={Tine, L{\'e}on~M.},
       title={The {L}ifschitz-{S}lyozov equation with space-diffusion of
  monomers},
        date={2012},
        ISSN={1937-5093},
     journal={Kinet. Relat. Models},
      volume={5},
      number={2},
       pages={325\ndash 355},
         url={http://dx.doi.org/10.3934/krm.2012.5.325},
      review={\MR{2911098}},
}

\bib{Goudon2013a}{article}{
      author={Goudon, Thierry},
      author={Lagouti{\`e}re, Fr{\'e}d{\'e}ric},
      author={Tine, L{\'e}on~Matar},
       title={Simulations of the {L}ifshitz-{S}lyozov equations: the role of
  coagulation terms in the asymptotic behavior},
        date={2013},
        ISSN={0218-2025},
     journal={Math. Models Methods Appl. Sci.},
      volume={23},
      number={7},
       pages={1177\ndash 1215},
      review={\MR{3042913}},
}

\bib{Goudon2013b}{article}{
      author={Goudon, Thierry},
      author={Sy, Mamadou},
      author={Tin{\'e}, L{\'e}on~M.},
       title={A fluid-kinetic model for particulate flows with coagulation and
  breakup: stationary solutions, stability, and hydrodynamic regimes},
        date={2013},
        ISSN={0036-1399},
     journal={SIAM J. Appl. Math.},
      volume={73},
      number={1},
       pages={401\ndash 421},
         url={http://dx.doi.org/10.1137/120861515},
      review={\MR{3033155}},
}

\bib{Helal2013}{article}{
      author={Helal, Mohamed},
      author={Hingant, Erwan},
      author={Pujo-Menjouet, Laurent},
      author={Webb, Glenn~F},
       title={Alzheimer's disease: analysis of a mathematical model
  incorporating the role of prions},
        date={2013},
     journal={arXiv preprint},
      number={1302.7013},
         url={http://arxiv.org/abs/1302.7013},
}

\bib{Herrmann2012}{article}{
      author={Herrmann, Michael},
      author={Lauren{\c{c}}ot, Philippe},
      author={Niethammer, Barbara},
       title={Self-similar solutions to a kinetic model for grain growth},
        date={2012},
        ISSN={0938-8974},
     journal={J. Nonlinear Sci.},
      volume={22},
      number={3},
       pages={399\ndash 427},
         url={http://dx.doi.org/10.1007/s00332-011-9122-1},
      review={\MR{2927765}},
}

\bib{Chau-Hoan1976}{article}{
      author={Ho{\`a}n, L{\^e}~Ch{\^a}u},
       title={D\'erivabilit\'e d'un semi-groupe engendr\'e par un op\'erateur
  {$m$}-accr\'etif de {$L\sp{1}(\Omega )$} et accr\'etif dans {$L\sp{\infty
  }(\Omega )$}},
        date={1976},
     journal={C. R. Acad. Sci. Paris S\'er. A.-B},
      volume={283},
      number={7},
       pages={Aii, A469\ndash A472},
      review={\MR{0420364 (54 \#8378)}},
}

\bib{Kolmogorov1975}{book}{
      author={Kolmogorov, A.~N.},
      author={Fom{\={\i}}n, S.~V.},
       title={Introductory real analysis},
   publisher={Dover Publications Inc.},
     address={New York},
        date={1975},
        note={Translated from the second Russian edition and edited by Richard
  A. Silverman, Corrected reprinting},
      review={\MR{0377445 (51 \#13617)}},
}

\bib{Laurencot2000}{article}{
      author={Lauren{\c{c}}ot, Philippe},
       title={On a class of continuous coagulation-fragmentation equations},
        date={2000},
        ISSN={0022-0396},
     journal={J. Differential Equations},
      volume={167},
      number={2},
       pages={245\ndash 274},
  url={http://www.sciencedirect.com/science/article/pii/S0022039600938099},
      review={\MR{1793195 (2001i:82044)}},
}

\bib{Laurencot2001b}{article}{
      author={Lauren{\c{c}}ot, Philippe},
       title={The {L}ifshitz-{S}lyozov equation with encounters},
        date={2001},
        ISSN={0218-2025},
     journal={Math. Models Methods Appl. Sci.},
      volume={11},
      number={4},
       pages={731\ndash 748},
         url={http://dx.doi.org/10.1142/S0218202501001070},
      review={\MR{1833001 (2002k:35193)}},
}

\bib{Laurencot2001a}{article}{
      author={Lauren{\c{c}}ot, Philippe},
       title={Weak solutions to the {L}ifshitz-{S}lyozov-{W}agner equation},
        date={2001},
        ISSN={0022-2518},
     journal={Indiana Univ. Math. J.},
      volume={50},
      number={3},
       pages={1319\ndash 1346},
         url={http://dx.doi.org/10.1512/iumj.2001.50.1890},
      review={\MR{1871358 (2003d:82061)}},
}

\bib{Laurencot2002}{article}{
      author={Lauren{\c{c}}ot, Philippe},
      author={Mischler, St{\'e}phane},
       title={The continuous coagulation-fragmentation equations with
  diffusion},
        date={2002},
        ISSN={0003-9527},
     journal={Arch. Ration. Mech. Anal.},
      volume={162},
      number={1},
       pages={45\ndash 99},
         url={http://dx.doi.org/10.1007/s002050100186},
      review={\MR{1892231 (2003f:35166)}},
}

\bib{Lifshitz1961}{article}{
      author={Lifshitz, I.~M.},
      author={Slyozov, V.~V.},
       title={The kinetics of precipitation from supersaturated solid
  solutions},
        date={1961},
     journal={J. Phys. Chem. Solids},
      volume={19},
      number={1-2},
       pages={35\ndash 50},
     url={http://www.sciencedirect.com/science/article/pii/0022369761900543},
}

\bib{Moreno2006}{article}{
      author={Moreno-Villoslada, Ignacio},
      author={Miranda, V\'ictor},
      author={Jofr\'e, Marl\'en},
      author={Chand\'ia, Patricio},
      author={Villatoro, Jos\'e~Miguel},
      author={Bulnes, Jos\'e~Luis},
      author={Cort\'es, Marcos},
      author={Hess, Susan},
      author={Rivas, Bernab\'e~L.},
       title={Simultaneous interactions between a low molecular-weight species
  and two high molecular-weight species studied by diafiltration},
        date={2006},
        ISSN={0376-7388},
     journal={J. Membr. Sci.},
      volume={272},
      number={1-2},
       pages={137\ndash 142},
  url={http://www.sciencedirect.com/science/article/pii/S0376738805005727},
}

\bib{Moreno2002}{article}{
      author={Moreno-Villoslada, Ignacio},
      author={Rivas, Bernab\'e~L.},
       title={Competition of divalent metal ions with monovalent metal ions on
  the adsorption on water-soluble polymers},
        date={2002},
     journal={J. Phys. Chem. B},
      volume={106},
      number={38},
       pages={9708\ndash 9711},
         url={http://pubs.acs.org/doi/abs/10.1021/jp013345s},
}

\bib{Palencia2011}{article}{
      author={Palencia, Manuel},
      author={Rivas, Bernab\'e~L.},
       title={Adsorption of linear polymers on polyethersulfone membranes:
  Contribution of divalent counterions on modifying of hydrophilic-lipophilic
  balance of polyelectrolyte chain},
        date={2011},
        ISSN={0376-7388},
     journal={J. Membr. Sci.},
      volume={372},
      number={1-2},
       pages={355\ndash 365},
  url={http://www.sciencedirect.com/science/article/pii/S0376738811001311},
}

\bib{Palencia2009}{article}{
      author={Palencia, Manuel},
      author={Rivas, Bernab\'e~L.},
      author={Pereira, Eduardo},
       title={Metal ion recovery by polymer-enhanced ultrafiltration using
  poly(vinyl sulfonic acid): Fouling description and membrane-metal ion
  interaction},
        date={2009},
        ISSN={0376-7388},
     journal={J. Membr. Sci.},
      volume={345},
      number={1-2},
       pages={191\ndash 200},
  url={http://www.sciencedirect.com/science/article/pii/S0376738809006425},
}

\bib{Palencia2009b}{article}{
      author={Palencia, Manuel},
      author={Rivas, Bernab\'e~L.},
      author={Pereira, Eduardo},
      author={Hern\'andez, Antonio},
      author={Pr\'adanos, Pedro},
       title={Study of polymer-metal ion-membrane interactions in liquid-phase
  polymer-based retention (lpr) by continuous diafiltration},
        date={2009},
        ISSN={0376-7388},
     journal={J. Membr. Sci.},
      volume={336},
      number={1-2},
       pages={128\ndash 139},
  url={http://www.sciencedirect.com/science/article/pii/S0376738809002105},
}

\bib{Palencia2010}{article}{
      author={Palencia, Manuel~S},
      author={Rivas, Bernab\'e~L},
      author={Pereira, Eduardo},
       title={Divalent metal-ion distribution around linear polyelectrolyte
  chains by continuous diafiltration: comparison of counterion condensation
  cell models},
        date={2010},
        ISSN={1097-0126},
     journal={Polym. Int.},
      volume={59},
      number={11},
       pages={1542\ndash 1549},
         url={http://dx.doi.org/10.1002/pi.2904},
}

\bib{Rivas2006}{book}{
      author={Rivas, B.~L.},
      author={Pereira, E.~D.},
      author={Moreno-Villoslada, I.},
      editor={Bregg, Robert~K.},
       title={Highlights on the use of diafiltration in the characterization of
  the low molecular-weight species-water-soluble polymer interactions},
      series={Frontal polymer research},
   publisher={Nova Science Publishers},
        date={2006},
}

\bib{Rivas2003}{article}{
      author={Rivas, Bernab\'e~L},
      author={Pereira, Eduardo~D},
      author={Moreno-Villoslada, Ignacio},
       title={Water-soluble polymer–metal ion interactions},
        date={2003},
        ISSN={0079-6700},
     journal={Progress in Polymer Science},
      volume={28},
      number={2},
       pages={173\ndash 208},
  url={http://www.sciencedirect.com/science/article/pii/S007967000200028X},
}

\bib{Rivas2011}{article}{
      author={Rivas, Bernab\'e~L.},
      author={Pereira, Eduardo~D.},
      author={Palencia, Manuel},
      author={S\'anchez, Julio},
       title={Water-soluble functional polymers in conjunction with membranes
  to remove pollutant ions from aqueous solutions},
        date={2011},
        ISSN={0079-6700},
     journal={Progress in Polymer Science},
      volume={36},
      number={2},
       pages={294\ndash 322},
  url={http://www.sciencedirect.com/science/article/pii/S0079670010001218},
}

\bib{perso}{unpublished}{
      author={Rivas, Bernab\'e~L.},
      author={S\'anchez, Julio},
       title={Personal communications},
        date={2013},
}

\bib{Rivas2004}{article}{
      author={Rivas, Bernab\'e~L},
      author={Schiappacasse, L~Nicol\'as},
      author={Pereira, Eduardo},
      author={Moreno-Villoslada, Ignacio},
       title={Interactions of polyelectrolytes bearing carboxylate and/or
  sulfonate groups with cu(ii) and ni(ii)},
        date={2004},
        ISSN={0032-3861},
     journal={Polymer},
      volume={45},
      number={6},
       pages={1771\ndash 1775},
  url={http://www.sciencedirect.com/science/article/pii/S0032386104000564},
}

\bib{Somorjai2010}{book}{
      author={Somorjai, Gabor~A.},
      author={Li, Yimin},
       title={Introduction to surface chemistry and catalysis},
   publisher={John Wiley \& Sons},
        date={2010},
}

\bib{Ziemer1989}{book}{
      author={Ziemer, William~P.},
       title={Weakly differentiable functions},
      series={Graduate Texts in Mathematics},
   publisher={Springer-Verlag},
     address={New York},
        date={1989},
      volume={120},
        ISBN={0-387-97017-7},
         url={http://dx.doi.org/10.1007/978-1-4612-1015-3},
        note={Sobolev spaces and functions of bounded variation},
      review={\MR{1014685 (91e:46046)}},
}

\bib{Zagorodni2007}{book}{
      author={Zagorodni, Andrei~A.},
       title={Ion exchange materials: properties and applications},
   publisher={Elsevier},
     address={Oxford - Amsterdam},
        date={2007},
        ISBN={978-0-08-044552-6},
}

\bib{Laurencot2002c}{article}{
  author = {Lauren\c{c}ot, Philippe},
  author = {Mischler, St \'ephane},
  title = {From the Becker–D\"oring to the Lifshitz–Slyozov–Wagner Equations},
  journal = {Journal of Statistical Physics},
  year = {2002},
  volume = {106},
  pages = {957\ndash991},
  number = {5-6},
  url = {http://dx.doi.org/10.1023/A}
}

\bib{Vasseur2002}{article}{
  author = {Vasseur, Alexis},
  author = {Poupaud, Fr\'ed'eric},
  author = {Collet, Jean-Francois},
  author = {Goudon, Thierry},
  title = {The Becker–D\"oring system and its Lifshitz–Slyozov limit},
  journal = {SIAM J. Appl. Math.},
  year = {2002},
  volume = {62},
  pages = {1488\ndash 1500},
  number = {5},
}

\bib{Mclaughlin}{article}{
 author={McLaughlin, D.~J.},
 author={Lamb, W.},
 author={McBride, A.~C.},
 title={Existence and uniqueness results for the non-autonomous coagulation and multiple-fragmentation equation},
 journal={Math. Methods Appl. Sci.},
 volume={21},
 number={11},
 pages={1067\ndash 1084},
 year={1998},
}

\bib{Fournier2005}{article}{
 author={Fournier, Nicolas},
 author={Lauren\c{c}ot, Philippe},
 title={Existence of self-similar solutions to smoluchowski's coagulation equation},
 journal={Commun. Math. Phys.},
 volume={256},
 number={3},
 pages={589\ndash 609},
 year={2005},
}

\end{biblist}
\end{bibdiv}

\end{document}